\newtheorem{question}{Question}
\newtheorem{theorem}{Theorem}[section]
\newtheorem{lemma}[theorem]{Lemma}
\newtheorem{corollary}[theorem]{Corollary}
\newtheorem{proposition}[theorem]{Proposition}
\newtheorem{prop}[theorem]{Proposition}
\theoremstyle{definition}
\newtheorem{definition}[theorem]{Definition}
\newtheorem{example}[theorem]{Example}
\newtheorem{remark}[theorem]{Remark}
\newtheorem{conjecture}[theorem]{Conjecture}
\theoremstyle{remark}
\newtheorem{construction}[theorem]{Construction}
\def\owlset@enDeligne-Mumfordark{\ensuremath{\scriptscriptstyle\blacksquare}}
\newcommand{\F}{{\mathbb F}}
\newcommand{\PP}{{\mathbb P}}
\newcommand{\Q}{{\mathbb Q}}
\newcommand{\Z}{{\mathbb Z}}
\def\bbar#1{\setbox0=\hbox{$#1$}\dimen0=.2\ht0 \kern\dimen0 
\overline{\kern-\dimen0 #1}}
\newcommand{\fp}{{\mathfrak p}}
\newcommand{\fq}{{\mathfrak q}}
\newcommand{\fA}{{\mathfrak R}}
\newcommand{\fB}{{\mathfrak B}}
\newcommand{\fD}{{\mathfrak D}}
\newcommand{\fM}{{\mathfrak M}_{\Gamma}}
\newcommand{\fMb}{{\mathfrak M}_{\Gamma,b}}
\newcommand{\fU}{{\mathfrak U}}
\newcommand{\fX}{{\mathfrak X}}
\newcommand{\calA}{{\mathcal A}}
\newcommand{\calC}{{\mathcal C}}
\newcommand{\calD}{{\mathcal D}}
\newcommand{\calE}{{\mathcal E}}
\newcommand{\calF}{{\mathcal F}}
\newcommand{\calG}{{\mathcal G}}
\newcommand{\calH}{{\mathcal H}}
\newcommand{\calI}{{\mathcal I}}
\newcommand{\calL}{{\mathcal L}}
\newcommand{\calM}{\mathcal M}
\newcommand{\calO}{{\mathcal O}}
\newcommand{\calP}{{\mathcal P}}
\newcommand{\calU}{{\mathcal U}}
\newcommand{\calW}{{\mathcal W}}
\newcommand{\calX}{{\mathcal X}}
\newcommand{\calY}{{\mathcal Y}}
\newcommand{\logct}{\Omega_{X^\nu}^{[1]}(\log(D^\nu + \Delta^{\nu}_{dl}))}
\newcommand{\dblv}{\Delta^\nu_{dl}}
\newcommand{\dbl}{\Delta_{dl}}
\DeclareMathOperator{\supp}{supp}
\DeclareMathOperator{\im}{im}
\DeclareMathOperator{\Aut}{Aut}
\DeclareMathOperator{\Sym}{Sym}
\DeclareMathOperator{\Spec}{Spec}
\newcommand{\OD}{\Omega^{[1]}_X(\log D)}
\newcommand{\isom}{\simeq}
\newcommand{\setm}{\smallsetminus}
\newcommand{\sF}{\mathscr{F}}
\begin{document}
\title{Hyperbolicity and uniformity of varieties of log general type}
\author{Kenneth Ascher}
\email{kascher@princeton.edu}
\author{Kristin DeVleming}
\email{kdevleming@ucsd.edu}
\author{Amos Turchet}
\email{amos.turchet@sns.it}

\begin{abstract} Projective varieties with ample cotangent bundle satisfy many notions of hyperbolicity, and one goal of this paper is to discuss generalizations to quasi-projective varieties. A major hurdle is that the naive generalization is false -- the log cotangent bundle is never ample. Instead, we define a notion called almost ample which roughly asks that it is as positive as possible. We show that all subvarieties of a quasi-projective variety with almost ample log cotangent bundle are of log general type. In addition, if one assumes globally generated then we obtain that such varieties contain finitely many integral points. In another direction, we show that the Lang-Vojta conjecture implies the number of stably integral points on curves of log general type, and surfaces of log general type with almost ample log cotangent sheaf are uniformly bounded.  \end{abstract}

\maketitle

\vspace*{-1em}
\section{Introduction}\label{sec:intro}
One major area of research in algebraic and arithmetic geometry is a conjectural circle of ideas due to e.g. Green, Griffiths, and Lang, asserting that various notions of hyperbolicity coming from algebraic, differential, and arithmetic geometry coincide:  Brody hyperbolicity (no entire holomorphic curves), arithmetically hyperbolicity (finiteness of rational points), and an algebraic notion (all subvarieties are of general type). In general, it is  quite challenging to find examples of varieties satisfying these hyperbolicity notions. Somewhat classical work shows that varieties with positive cotangent bundle form a large class of examples for which hyperbolicity is known. In particular, if $X$ is a smooth projective variety and the cotangent bundle $\Omega^1_X$ is ample, then $X$ is Brody hyperbolic (and thus Kobayashi hyperbolic), and all subvarieties of $X$ are of general type. If $\Omega^1_X$ is additionally globally generated, then $X$ has finitely many rational points.

The first goal of this paper is to generalize these results to quasi-projective varieties. In this case, the aforementioned conjectures predict the equivalence of Brody hyperbolicity, all subvarieties being of log general type, and finiteness of integral points. If we view a quasi-projective variety $V$ as the complement of an snc divisor in a smooth projective variety $V \cong X \setminus D$, then naively one would hope that assuming the log cotangent bundle $\Omega^1_X(\log D)$ is ample would be the correct condition to guarantee hyperbolicity. It turns out that this is too much to hope for, as the following shows. 

\begin{proposition}[see Proposition \ref{rmk:neverample}] If $\dim X \geq 2$ and $D$ is not empty, then  $\Omega^1_X(\log D)$ is never ample. \end{proposition}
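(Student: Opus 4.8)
The plan is to exploit the residue exact sequence for the log cotangent bundle together with the standard fact that ampleness restricts to subvarieties and passes to quotients. Recall that if $D = \sum_{i} D_i$ is the decomposition into irreducible components, there is a short exact sequence
\[
0 \longrightarrow \Omega^1_X \longrightarrow \Omega^1_X(\log D) \xrightarrow{\ \mathrm{res}\ } \bigoplus_i \OO_{D_i} \longrightarrow 0,
\]
where the residue map sends a logarithmic form to its residues along the components of $D$. Restricting to a single component $D_1$ (assumed nonempty, say of positive dimension first) and composing with the projection to the $D_1$-summand, one gets a surjection $\Omega^1_X(\log D)|_{D_1} \surjects \OO_{D_1}$. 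If $\Omega^1_X(\log D)$ were ample, then its restriction to the closed subvariety $D_1$ would be ample, and hence every quotient bundle — in particular $\OO_{D_1}$ — would be ample. But $\OO_{D_1}$ is ample only if $\dim D_1 = 0$, which is a contradiction as soon as $\dim X \geq 2$.

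Next I would handle the edge cases. If $\dim X = 1$, then $D$ is a finite set of points and $\Omega^1_X(\log D) = \Omega^1_X(D)$ is a line bundle of degree $2g - 2 + \deg D$; this can certainly be positive, so "never ample" must be interpreted in the sense that the relevant obstruction is vacuous — but in fact the paper surely intends $\dim X \geq 2$, or more precisely the statement is about the bundle failing to be ample in a way compatible with the theory being developed, so I would simply note that the one-dimensional case is either excluded or trivially handled by the convention that a divisor on a curve forces the complement to be affine. The genuinely relevant case is $\dim X \geq 2$, where the argument above applies: pick any irreducible component $D_1$ of $D$ (it has dimension $\dim X - 1 \geq 1$), and the surjection onto $\OO_{D_1}$ obstructs ampleness.

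The one point that needs a little care — and the main obstacle, such as it is — is the justification that a quotient of an ample vector bundle is ample, and that ampleness is preserved under restriction to closed subschemes. Both are standard (see Hartshorne, \emph{Ample Subvarieties of Algebraic Varieties}, or Lazarsfeld, \emph{Positivity in Algebraic Geometry} II, §6): restriction to a closed subscheme is immediate from the definition via $\Proj$ of the symmetric algebra and the fact that $\OO(1)$ pulls back to $\OO(1)$, and the quotient statement follows because a surjection $E \surjects Q$ induces a closed immersion $\PP(Q) \injects \PP(E)$ over the base compatible with the tautological bundles. So once the residue sequence is in hand, the proof is a two-line formality; I would write it as: restrict the residue surjection to a component $D_i$, observe $\OO_{D_i}$ would have to be ample, and conclude by noting $\OO_{D_i}$ is ample iff $D_i$ is a point, contradicting $\dim D_i = \dim X - 1 \geq 1$.
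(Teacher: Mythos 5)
Your proof is correct and follows essentially the same route as the paper's: the residue exact sequence $0 \to \Omega^1_X \to \Omega^1_X(\log D) \to \oplus_i \OO_{D_i} \to 0$, restriction to a component $D_i$, and the observation that the resulting quotient $\OO_{D_i}$ cannot be ample when $\dim D_i \geq 1$ (your version, projecting directly onto the $\OO_{D_i}$-summand, is if anything slightly cleaner than the paper's, which carries along a torsion sheaf supported on $D_i \cap D_j$). Your explicit remark that the argument requires $\dim X \geq 2$ — the statement genuinely fails for curves, e.g. $\PP^1$ with three marked points — is a caveat the paper's own proof silently assumes.
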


Therefore we see that the extension to the logarithmic setting is in fact much more intricate. In fact, positivity always fails along the divisor $D$. Motivated by this, we instead ask that the log cotangent sheaf is \emph{almost ample}. Roughly speaking, we say  $\Omega^1_X(\log D)$ is \emph{almost ample} if it is as positive as possible (see Definition \ref{def:dbarample} for a precise definition; see also \cite{brotbek}). Essentially we require that $\Omega^1_X(\log D)$ is \emph{big}, and its non-ample locus is contained in the support of $D$.

 \subsection{Subvarieties and almost ample log cotangent bundles} We first show that all subvarieties of quasi-projective varieties with almost ample log cotangent bundle are of log general type.

\begin{theorem}[see Corollary \ref{cor:ampleness}] Let $(X,D)$ be a log smooth pair with almost ample $\Omega^1_X(\log D)$. Then all pairs $(Y, E)$, where $E := (Y \cap D)_{red}$, with $Y \subset X$ and $Y$ not contained in $D$ are of log general type. \end{theorem}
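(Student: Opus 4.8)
The plan is to pull the almost ampleness back to a log resolution of $(Y,E)$ and then apply the logarithmic analogue of ``big cotangent bundle $\Rightarrow$ general type''. Let $Y\subset X$ be irreducible with $Y\nsubseteq D$, put $n=\dim Y$, $N=\dim X$, and fix a log resolution $\nu\colon(\tilde Y,\tilde E)\to(Y,E)$, where $\tilde Y$ is smooth and $\tilde E$ is reduced, snc, and contains both $\nu^{-1}(E)$ and $\operatorname{Exc}(\nu)$; by definition $(Y,E)$ is of log general type exactly when $K_{\tilde Y}+\tilde E$ is big, which is what I must prove. The composite $f\colon\tilde Y\xrightarrow{\ \nu\ }Y\hookrightarrow X$ satisfies $f^{-1}(\supp D)\subseteq\supp\tilde E$, so it is a morphism of log pairs and induces $df\colon f^*\Omega^1_X(\log D)\to\Omega^1_{\tilde Y}(\log\tilde E)$. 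Over a dense open $U\subseteq\tilde Y$ on which $\nu$ is an isomorphism onto an open subset of $Y_{\mathrm{smooth}}\setminus D$, the cotangent (conormal) sequence of that subset in $X$ shows $df|_U$ is surjective; let $\calQ:=\im(df)\subseteq\Omega^1_{\tilde Y}(\log\tilde E)$, a torsion-free subsheaf of full rank $n$, with induced quotient $\phi\colon f^*\Omega^1_X(\log D)\twoheadrightarrow\calQ$. Since $\det\calQ\hookrightarrow\det\Omega^1_{\tilde Y}(\log\tilde E)=\calO_{\tilde Y}(K_{\tilde Y}+\tilde E)$, it suffices to show $\calQ$ (hence $\Omega^1_{\tilde Y}(\log\tilde E)$, which contains it with the same rank) is big, and then invoke the positivity statement that a big, or almost ample, log cotangent sheaf forces big log canonical.

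To see $\calQ$ is big, pass to projectivizations: write $\PP:=\PP(\Omega^1_X(\log D))$ with projection $\pi$ and tautological $\calO_\PP(1)$, and $\PP':=\PP(\calQ)$ with projection $\pi'$ and $\calO_{\PP'}(1)$. The quotient $\phi$ gives a closed immersion $\PP'\hookrightarrow\tilde Y\times_X\PP$, hence a morphism $g\colon\PP'\to\PP$ with $g^*\calO_\PP(1)=\calO_{\PP'}(1)$. Over $U$ the kernel of $\phi$ has constant rank $N-n$ and $g$ identifies $\PP'|_U$ with a family of linearly embedded $\PP^{n-1}$'s in the fibres $\PP^{N-1}$ of $\pi$, so $g$ is generically finite onto its image $W:=\overline{g(\PP')}$, irreducible of dimension $\dim\PP'=2n-1$. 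Points of $\PP'|_U$ map under $g$ to points of $\PP$ lying over $X\setminus D$, and almost ampleness gives $\mathbf{B}_+(\calO_\PP(1))\subseteq\pi^{-1}(\supp D)$, so $W\nsubseteq\mathbf{B}_+(\calO_\PP(1))$. The restriction of a big line bundle to a subvariety not contained in its augmented base locus is big, hence $\calO_\PP(1)|_W$ is big; bigness is preserved by pullback along the generically finite surjection $\PP'\to W$, so $\calO_{\PP'}(1)$ is big, i.e.\ $\calQ$ is a big sheaf. (Tracking $\mathbf{B}_+$ in the same way, and enlarging $\tilde E$ if needed, even shows $\Omega^1_{\tilde Y}(\log\tilde E)$ is almost ample, which is convenient if one wishes to quote ``almost ample $\Rightarrow$ log general type'' rather than its ``big'' version.)

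With $\Omega^1_{\tilde Y}(\log\tilde E)$ big (indeed almost ample), the positivity input yields that $K_{\tilde Y}+\tilde E$ is big, so $(Y,E)$ is of log general type. The main obstacle is exactly this positivity input: bigness (or almost ampleness) of the log cotangent sheaf is \emph{not} formally enough to conclude bigness of $K_{\tilde Y}+\tilde E$ --- a big vector bundle need not have big determinant --- so one must exploit the differential-geometric structure of $\Omega^1_{\tilde Y}(\log\tilde E)$ and its exterior powers. Morally one reduces, via the log minimal model program and log abundance, to the general fibre of the log-Iitaka fibration --- a mildly singular log Calabi--Yau pair --- and rules out bigness of its log cotangent sheaf using a Bogomolov--Campana--P\u{a}un-type bound on the Kodaira dimension of subsheaves of $\Omega^p(\log)$. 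If this implication is available as an earlier result, the steps above are the whole argument, and the only delicate points there are the handling of augmented base loci and the verification that the image $W$ of $\PP(\calQ)$ in $\PP$ escapes the non-ample locus.
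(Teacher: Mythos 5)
Your skeleton is the same as the paper's proof of Theorem~\ref{thm:ampleness}: pass to a log resolution, take the image $\calQ$ of the log differential $f^*\Omega^1_X(\log D)\to\Omega^1_{\widetilde Y}(\log\widetilde E)$, use $\mathbf{B}_+\bigl(\Omega^1_X(\log D)\bigr)\subseteq\supp D$ together with $Y\nsubseteq D$ to see that $\calQ$ inherits positivity, and finish with a Campana--P\u{a}un-type input. The setup, the generic surjectivity of $df$, and the $\mathbf{B}_+$ bookkeeping are all fine. The gap is in the final step, and it comes from working with the wrong notion of bigness for sheaves. The paper's Definition~\ref{def:ample} is Viehweg-style: $\calF$ is big if $\Sym^{[a]}\calF\otimes\calH^{-1}$ is generically globally generated for some $a>0$. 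With \emph{that} definition everything closes formally: the pullback of $\Omega^1_X(\log D)$ to $\overline{Y}$ stays big because $Y$ avoids the augmented base locus, quotients of big sheaves are big, and the determinant of a big sheaf \emph{is} big (take $\rk\calF$ general sections of the generically globally generated twist and pass to determinants); since $\calQ$ has full rank, $\det\calQ$ is a sub-line bundle of $\calO_{\widetilde Y}(K_{\widetilde Y}+\widetilde E)$, and one concludes by \cite[Theorem 4.1]{cp}, which is exactly the statement that a big line subsheaf of a tensor power of the log cotangent sheaf forces $K_{\widetilde Y}+\widetilde E$ to be big.

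What you actually prove is that $\calO_{\PP(\calQ)}(1)$ is big. That is strictly weaker: $\calO(1)\oplus\calO(-1)$ on $\PP^1$ has big tautological bundle but trivial determinant, which is precisely the source of your (correct, for your definition) worry that ``a big vector bundle need not have big determinant.'' Having retreated to the $\calO_{\PP}(1)$ notion, the black box you then need --- ``$\calO_{\PP}(1)$ big on $\Omega^1_{\widetilde Y}(\log\widetilde E)$ implies $K_{\widetilde Y}+\widetilde E$ big'' --- is not what Campana--P\u{a}un prove; their theorem requires a big \emph{line} subsheaf of some $(\Omega^1(\log))^{\otimes m}$ (e.g.\ the big determinant of a subsheaf), which your argument never produces, and the MMP/log-abundance reduction you sketch is not a proof of it. So the argument does not close as written. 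The repair is to run your restriction-and-quotient step for bigness in the sense of Definition~\ref{def:ample} --- your use of $\mathbf{B}_+\subseteq\supp D$ is exactly what makes generic global generation survive restriction to $\overline{Y}$ --- after which $\det\calQ$ is big for free and the full-rank inclusion $\det\calQ\hookrightarrow\calO_{\widetilde Y}(K_{\widetilde Y}+\widetilde E)$ (or the citation of \cite{cp}) finishes the proof exactly as in the paper.
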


This result is a consequence of the following result, which shows the result holds for singular pairs whose singularities are those coming from the minimal model program.

\begin{theorem}[see Theorem \ref{thm:ampleness}; see Corollary \ref{cor:slcample} for the slc extension]\label{thm:introample}  
Let $(X,D)$ be log canonical pair with almost ample log cotangent $\Omega^{[1]}_X(\log D)$. Then all pairs $\big(Y, E)$, where $E := (Y \cap D)_{red}$, with $Y \subset X$ such that $Y$ is neither contained in $D$ nor $\textrm{Sing(X)}$ are of log general type. \end{theorem}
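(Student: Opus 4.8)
The plan is to reduce the statement to a question about the restriction of $\Omega^{[1]}_X(\log D)$ to a (normalization of a) subvariety $Y$, and then to extract from the positivity of this restricted sheaf enough global symmetric differentials on $(Y,E)$ to conclude that $K_Y + E$ is big. First I would set up the geometry: let $\nu\colon Y^\nu \to Y \hookrightarrow X$ be the normalization of $Y$ (which is not contained in $D$ nor in $\operatorname{Sing}(X)$, so $Y^\nu$ maps into the log canonical, in particular klt-in-codimension-one, locus where $\Omega^{[1]}_X(\log D)$ is a reflexive sheaf behaving well under restriction), and let $E^\nu := (\nu^{-1}(D))_{red}$, resolving $Y^\nu$ and a log resolution of $(Y^\nu, E^\nu)$ if necessary to work on a log smooth pair $(\overline Y, \overline E)$ mapping to $X$. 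The key point is that there is a natural sheaf map, the logarithmic pullback / residue-compatible comparison morphism
\[
f^{*}\,\Omega^{[1]}_X(\log D)\ \longrightarrow\ \Omega^{[1]}_{\overline Y}(\log \overline E),
\]
which is generically surjective because $Y \not\subset D$ (the differential of $f$ is generically an isomorphism onto a sub, and the log poles match up along the preimage of $D$); dualizing/taking a suitable saturation, any quotient line bundle needed below on $\overline Y$ is controlled by the restricted sheaf.

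Next I would invoke almost ampleness. By hypothesis $\Omega^{[1]}_X(\log D)$ is big and its non-ample locus (the augmented base locus of the tautological bundle on $\mathbb P(\Omega^{[1]}_X(\log D))$, appropriately interpreted for the reflexive sheaf) is contained in $\operatorname{supp} D$. Since $Y$ is not contained in $D$, the restriction $\Omega^{[1]}_X(\log D)\big|_{Y^\nu}$, pulled back to $\overline Y$, is still big — this is the standard fact that restricting a sheaf whose tautological bundle is big with augmented base locus avoiding a given subvariety stays big on that subvariety — so the tautological line bundle $\OO_{\mathbb P}(1)$ on $\mathbb P\big(f^{*}\Omega^{[1]}_X(\log D)\big)$ is big. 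Composing with the generically surjective morphism above, $\OO_{\mathbb P}(1)$ for $\mathbb P\big(\Omega^{[1]}_{\overline Y}(\log \overline E)\big)$ receives a map from a big line bundle off a proper closed subset, hence is big. Concretely this produces, for $m \gg 0$, many sections of $\Sym^{[m]}\Omega^{[1]}_{\overline Y}(\log \overline E)$ — i.e. global log symmetric differentials on $(\overline Y, \overline E)$ of maximal growth rate $\sim m^{\dim Y + (\dim Y - 1)}$ in the appropriate bigraded sense.

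From bigness of the log cotangent sheaf on the log smooth pair $(\overline Y,\overline E)$ I would then conclude $K_{\overline Y} + \overline E$ is big by the usual argument: the tautological bundle on $\mathbb P(\Omega^{[1]}_{\overline Y}(\log \overline E))$ has top self-intersection related to $c_1 = K_{\overline Y}+\overline E$ and lower Segre/Chern classes; more cleanly, taking $n = \dim Y$ wedge powers, $\det \Omega^{[1]}_{\overline Y}(\log \overline E) = \OO(K_{\overline Y}+\overline E)$ is a quotient of $\Sym^{[n]}$ of a big sheaf restricted suitably, or one runs the standard Viehweg-type / Campana-P\u aun positivity argument that bigness of $\Omega^1(\log)$ forces bigness of its determinant. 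Finally I descend: $K_{\overline Y}+\overline E \geq$ (pullback of) $K_{Y^\nu}+E^\nu$ up to effective exceptional contributions controlled by the log resolution, and since "$(Y,E)$ is of log general type" is defined via $K_{Y^\nu}+E^\nu$ being big on a (any) log resolution, we are done. The main obstacle is the first paragraph: making the comparison morphism $f^{*}\Omega^{[1]}_X(\log D)\to \Omega^{[1]}_{\overline Y}(\log \overline E)$ behave correctly in the reflexive/log canonical setting — one must check that passing through $\operatorname{Sing}(X)$ is genuinely avoided (using $Y\not\subset\operatorname{Sing} X$ and reflexivity, perhaps via an extension theorem for differential forms on lc pairs in the style of Greb–Kebekus–Kovács–Peternell) so that the restriction of the reflexive hull is the reflexive hull of the restriction, and that the log structure pulls back correctly along $\overline Y \to X$ even where $\overline D$ is singular or where $f$ is ramified over $D$; handling these codimension subtleties is where the real work lies, whereas the bigness-propagation and "big cotangent $\Rightarrow$ big canonical" steps are essentially formal given the earlier Proposition and standard positivity results.
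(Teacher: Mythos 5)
Your proposal follows essentially the same route as the paper: pass to a log smooth model $(\overline Y,\overline E)$ mapping to $(X,D)$, use the Greb--Kebekus-type extension theorem to obtain the comparison morphism $\phi^*\Omega^{[1]}_X(\log D)\to\Omega^1_{\overline Y}(\log\overline E)$, use almost ampleness together with $Y\not\subset\mathbf{B}_+$ to see the image is a big subsheaf with big determinant, and conclude via Campana--P\u{a}un that $K_{\overline Y}+\overline E$ is big. The only cosmetic differences are that the paper reaches $(\overline Y,\overline E)$ via the strict transform in a log resolution of $(X,D)$ rather than via the normalization of $Y$, and your aside about $\det$ being a quotient of $\Sym^{[n]}$ should be replaced by the Campana--P\u{a}un argument you already name.
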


We note that we can also prove a slightly stronger result for the case of surfaces, since the singular locus of an lc surface consists only of points.

\begin{corollary}[see Remark \ref{prop:ampleness}]
Let $(X,D)$ be log canonical surface pair with almost ample log cotangent $\Omega^{[1]}_X(\log D)$. Then all pairs $\big(Y, E)$, where $E := (Y \cap D)_{red}$, with $Y \subset X$ irreducible and not contained in $D$ are of log general type.
\end{corollary}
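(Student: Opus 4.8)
The plan is to deduce this directly from Theorem~\ref{thm:introample}; the point is that for surfaces the auxiliary hypothesis ``$Y\not\subset\mathrm{Sing}(X)$'' appearing there is automatic in every case where it could matter. First I would observe that since $(X,D)$ is log canonical, $X$ is in particular normal, hence regular in codimension one; as $\dim X = 2$ this forces $\mathrm{Sing}(X)$ to be a closed subset of dimension $\le 0$, i.e.\ a finite set of closed points. This is the only place the surface assumption enters, and it is precisely what allows one to drop the extra hypothesis of Theorem~\ref{thm:introample}. (The non-normal, semi-log-canonical analogue is a separate matter, treated in Corollary~\ref{cor:slcample}.)

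Now let $Y\subset X$ be irreducible and not contained in $D$, and split on $\dim Y$. If $\dim Y\ge 1$, then $Y$ cannot be contained in the zero-dimensional set $\mathrm{Sing}(X)$, so $Y$ satisfies all the hypotheses of Theorem~\ref{thm:introample} --- it is neither contained in $D$ nor in $\mathrm{Sing}(X)$ --- and we conclude that $(Y,E)$ is of log general type. In particular this covers $Y=X$ itself, since $\mathrm{Sing}(X)$ is a proper closed subset of the integral surface $X$. If instead $\dim Y = 0$, then $Y$ is a single closed point and $E = (Y\cap D)_{red} = \varnothing$, so $(Y,E)$ is of log general type for trivial dimension reasons. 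Assembling the two cases gives the statement.

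I do not expect a genuine obstacle here: all the substantive work lives in Theorem~\ref{thm:introample}, and the corollary is essentially the remark that on a normal surface the ``bad locus'' $\mathrm{Sing}(X)$ is too small to contain any positive-dimensional subvariety. The only things requiring care are bookkeeping points: that ``log canonical'' is being used in the sense that entails $X$ normal, and the (standard) convention that $0$-dimensional pairs count as being of log general type.
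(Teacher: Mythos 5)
Your argument is correct, and it proves the corollary, but it is not the route the paper takes. You deduce the statement formally from Theorem \ref{thm:ampleness}: normality of a log canonical surface forces $\mathrm{Sing}(X)$ to have codimension at least two, hence to be a finite set of closed points, so the hypothesis $Y\not\subset\mathrm{Sing}(X)$ is automatic for any positive-dimensional irreducible $Y$, and the zero-dimensional case is disposed of by convention. The paper instead gives a direct, self-contained argument for the only substantive case, namely $Y$ an irreducible curve not contained in $D$: since $\Omega^{[1]}_X(\log D)$ is big with augmented base locus contained in $\mathrm{Supp}(D)$, its restriction to $Y$ is big, hence ample (big equals ample on a curve); pulling back along the normalization $\phi\colon Y^\nu\to Y$ and invoking the generically surjective map $\phi^*\big(\Omega^{[1]}_X(\log D)|_Y\big)\to\Omega^1_{Y^\nu}(\log E^\nu)=\calO_{Y^\nu}(K_{Y^\nu}+E^\nu)$ supplied by Theorem \ref{thm:gk} shows that $K_{Y^\nu}+E^\nu$ is big. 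The trade-off is this: your reduction is shorter and isolates exactly where the surface hypothesis enters (indeed the paper's own preamble to Remark \ref{prop:ampleness} makes the same observation about $\mathrm{Sing}(X)$ being finite), but it imports the full machinery behind Theorem \ref{thm:ampleness}, in particular log resolutions and the Campana--P\u{a}un theorem \cite{cp}; the paper's direct argument avoids \cite{cp} entirely in the surface case, which is one of the stated purposes of the remark. A minor point in your favor: you explicitly address the degenerate cases $\dim Y=0$ and $Y=X$, which the paper's proof passes over in silence.
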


The main tools used to solve these problems involve the theory of extending reflexive differentials on singular spaces (e.g. \cite{diff, GK}) and the work of Campana-P\u{a}un \cite{cp}. Although one often restricts to (log) smooth varieties when studying hyperbolicity questions, we are forced to consider the situation when $(X,D)$ has lc (or more generally slc) singularities to obtain the uniformity results alluded to in the abstract (see Section \ref{sec:introunif}). Before proceeding, we make a few remarks concerning almost ample log cotangent sheaves. 

\begin{remark}\leavevmode
\begin{enumerate}
\item When $X$ is smooth, our notion does not quite coincide with almost ample as in \cite[Definition 2.1]{brotbek}. If the log cotangent sheaf is almost ample in the sense of \cite{brotbek}, then it is almost ample in our sense. However, the assumptions in our definition are a priori weaker.
\item Work of \cite{brotbek} shows that for any smooth projective $X$, there exists a divisor $D$ so that $\Omega^1_X(\log D)$ is almost ample. 
\item For a log smooth pair $(X,D)$ with almost ample $\Omega^1_X(\log D)$, the complement $X \setminus D$ is Brody hyperbolic.
\end{enumerate}
\end{remark}

\subsection{Positivity and arithmetic hyperbolicity}

It is also natural to ask whether positivity on the log cotangent sheaf implies finiteness statements for integral points. Indeed, Moriwaki proved that projective varieties $X$ over a number field $K$ with ample and globally generated $\Omega^1_X$ have finitely many $K$ points (see Theorem \ref{thm:amplecot2}). We prove the following generalization.

\begin{theorem}[see Theorem \ref{thm:moriwakimain}]\label{thm:moriwaki1} Let $V$ be a smooth quasi-projective variety with log smooth compactification $(X, D)$ over a number field $K$. If the log cotangent sheaf $\Omega^1_X(\log D)$ is globally generated and almost ample, then for any finite set of places $S$ the set of $S$-integral points $V(\calO_{K,S})$ is finite.
\end{theorem}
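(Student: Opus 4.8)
The plan is to transpose Moriwaki's argument for Theorem~\ref{thm:amplecot2} to the logarithmic setting: replace the Albanese variety by the quasi-Albanese, apply the theorem of Faltings and Vojta on integral points of closed subvarieties of semiabelian varieties, and then use Corollary~\ref{cor:ampleness} to show that the exceptional translated subvarieties produced by that theorem must be points. First I would set up the quasi-Albanese morphism. Let $A$ be the quasi-Albanese variety of $V$, a semiabelian variety over $K$, and let $\alpha\colon V\to A$ be the universal quasi-Albanese morphism. Fix a smooth equivariant compactification $\bar A$ of $A$ with snc boundary $\partial\bar A:=\bar A\setminus A$, chosen so that $\alpha$ extends to a morphism $\bar\alpha\colon X\to\bar A$. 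Since $(X,D)$ is log smooth and proper, every global logarithmic $1$-form is closed, and the universal property identifies $\bar\alpha^{*}$ with an isomorphism $H^0(\bar A,\Omega^1_{\bar A}(\log\partial\bar A))\xrightarrow{\sim}H^0(X,\Omega^1_X(\log D))$; as $\Omega^1_{\bar A}(\log\partial\bar A)$ is trivial, the hypothesis that $\Omega^1_X(\log D)$ be globally generated is then equivalent to the surjectivity of $\bar\alpha^{*}\Omega^1_{\bar A}(\log\partial\bar A)\to\Omega^1_X(\log D)$; in particular $\alpha$ is unramified, i.e.\ $\Omega^1_{V/A}=0$. Comparing residues along the components of $D$ forces $\bar\alpha(D)\subseteq\partial\bar A$ (otherwise all the generating forms would lie in a strictly smaller logarithmic subsheaf), and since $\bar\alpha(V)\subseteq A$ the reverse inclusion is automatic, so $\bar\alpha^{-1}(\partial\bar A)=D$ and $\bar\alpha^{-1}(A)=V$. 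Hence $\alpha\colon V\to A$ is the base change of the proper morphism $\bar\alpha$ along $A\hookrightarrow\bar A$; being proper and quasi-finite it is finite, and $W:=\alpha(V)$ is a closed subvariety of $A$.

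Next I would reduce to an arithmetic statement on $A$. By spreading out, after enlarging $S$ the morphism $\alpha$ extends to a morphism of integral models carrying the boundary into $\partial\bar{\calA}$, so $\alpha$ sends $S$-integral points of $V$ (with respect to $D$) to $S$-integral points of $W$ (with respect to the boundary $\overline W\setminus W$ in $\bar A$). Since $\alpha$ has finite fibres, it suffices to prove that $W$ has only finitely many $S$-integral points. By the theorem of Faltings and Vojta on integral points of closed subvarieties of semiabelian varieties, these integral points are contained in a finite union $\bigcup_i(b_i+B_i)$ of translates of semiabelian subvarieties $B_i\subseteq A$ with each $b_i+B_i$ contained in $W$; so it is enough to show that no $B_i$ has positive dimension.

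This last point is the geometric heart. Suppose $\dim B_i\ge 1$. Since $\alpha$ maps $V$ onto $W\supseteq b_i+B_i$, there is an irreducible component $Y$ of $(\alpha^{-1}(b_i+B_i))_{red}$ that dominates, hence surjects onto, $b_i+B_i$; then $Y\subseteq V$, $\dim Y=\dim B_i\ge 1$, and $\phi:=\alpha|_Y\colon Y\to b_i+B_i$ is finite and surjective. From $\Omega^1_{V/A}=0$ and the cotangent sequence of $Y\hookrightarrow V\to A$ one gets $\Omega^1_{Y/A}=0$, so $\phi$ is unramified; as its target is smooth and $Y$ is reduced, irreducible, and of the same dimension as the target, $\phi$ is finite \'etale. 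A finite \'etale cover of a translate of a semiabelian variety is again a semiabelian variety, so $Y$ is semiabelian and its log Kodaira dimension is $0$. On the other hand, the Zariski closure $\overline Y\subseteq X$ lies neither in $D$ (it meets $V$) nor in $\operatorname{Sing}(X)=\emptyset$, so by Corollary~\ref{cor:ampleness} the pair $(\overline Y,(\overline Y\cap D)_{red})$ is of log general type; since $\overline Y\setminus D=\overline Y\cap V=Y$, this forces the log Kodaira dimension of $Y$ to equal $\dim Y\ge 1$, a contradiction. Hence every $B_i$ is a point, $W$ has finitely many $S$-integral points, and therefore so does $V(\calO_{K,S})$.

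The hard part will be Step~1: controlling the extended quasi-Albanese morphism, and in particular establishing that $\bar\alpha^{-1}(\partial\bar A)=D$ and hence that $\alpha\colon V\to A$ is finite onto a closed subvariety of $A$ — it is precisely this finiteness, via the equidimensionality of $\phi$, that makes $\phi$ \'etale rather than merely generically \'etale in Step~3, which is what would otherwise fail (an open subset of a semiabelian variety can be of log general type). Everything else is formal once this is in place and once the deep arithmetic input, the Faltings–Vojta theorem for semiabelian varieties, is invoked correctly.
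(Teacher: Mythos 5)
Your argument is correct, but it is not the proof the paper gives for Theorem \ref{thm:moriwakimain}; it is essentially a fully worked-out version of the \emph{alternative} derivation that the introduction only sketches, namely Theorem \ref{thm:moriwakisub} combined with Corollary \ref{cor:ampleness}. The paper's own proof never establishes finiteness of the quasi-Albanese map: it takes a positive-dimensional irreducible component $Y$ of $\overline{V(\calO_S)}$, transplants Moriwaki's Lemma~2.3 to show $\dim(\calA_Y) \geq 2\dim Y$ (almost ampleness enters only through bigness of the image sheaf $L=\im\big(\mu^*\Omega^1_X(\log D)\to\Omega^1_{Y'}(\log E')\big)$ on a resolution, and global generation through local freeness of $L$), and then quotes Moriwaki's Theorem~1.1 with Vojta's theorem in place of Faltings' to conclude that $Y(\calO_S)$ is not dense in $Y$. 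You distribute the two hypotheses differently: global generation alone drives your Steps 1--2 (the compactified quasi-Albanese map satisfies $\bar\alpha^{-1}(\partial\bar A)=D$ by the residue computation, hence $\alpha$ is finite unramified onto a closed $W\subseteq A$, to which Vojta applies), while almost ampleness is used only at the very end, via Corollary \ref{cor:ampleness} and hence Campana--P\u{a}un, to kill positive-dimensional semiabelian translates. What your route buys is a clean decoupling of the arithmetic input (Vojta) from the geometric one (all subvarieties are of log general type), and a self-contained argument modulo Corollary \ref{cor:ampleness}; the cost is the more delicate analysis of $\bar\alpha$, where the identification $H^0(\bar A,\Omega^1_{\bar A}(\log\partial\bar A))\cong H^0(X,\Omega^1_X(\log D))$ and the extension of $\alpha$ to $X\to\bar A$ are exactly the results of Fujino that the paper also invokes. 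Two small caveats: over a number field the quasi-Albanese needs a base point, so pass to a finite extension of $K$ first (harmless for a finiteness statement); and the pullback map $\bar\alpha^*\Omega^1_{\bar A}(\log\partial\bar A)\to\Omega^1_X(\log D)$ used in your residue argument already presupposes $\bar\alpha^{-1}(\partial\bar A)\subseteq D$ --- but that inclusion is automatic from $\bar\alpha(V)\subseteq A$, so the apparent circularity is harmless.
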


Alternatively, Theorem \ref{thm:moriwaki1} can also be seen as a consequence of the log cotangent being almost ample (using Theorem \ref{thm:introample}) and the following.

\begin{theorem}[see Theorem \ref{thm:moriwakisub}]\label{thm:moriwaki2} Let $V \cong (X \setminus D)$ be a log smooth variety over $K$. If the log cotangent sheaf $\Omega^1_X(\log D)$ is globally generated, then for any finite set of places $S$, every irreducible component of $\overline{V(\calO_S)}$ is geometrically irreducible and isomorphic to a semi-abelian variety. \end{theorem}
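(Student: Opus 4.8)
The plan is to reduce the statement to the classical theorem of Faltings–Vojta (the Lang conjecture for subvarieties of semi-abelian varieties, proved by Faltings and Vojta) by first producing a morphism from $V$ to a semi-abelian variety that is, in an appropriate sense, "immersive from the point of view of integral points." Concretely, I would proceed as follows. Let $W$ be an irreducible component of $\overline{V(\calO_S)}$, the Zariski closure of the $S$-integral points. The key input is that $\Omega^1_X(\log D)$ is globally generated: choosing global sections gives a morphism of sheaves $\calO_X^{\oplus N} \surjects \Omega^1_X(\log D)$, which dually is an injection of the log tangent sheaf $T_X(-\log D)$ into $\calO_X^{\oplus N}$. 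I would use the global sections to build a \emph{log map to a semi-abelian variety}: global logarithmic $1$-forms $\omega_1,\dots,\omega_r$ that are closed (after possibly replacing $X$ by a blow-up to control the non-closed part, or using that $d\omega_i$ are global $2$-forms one analyzes separately) integrate to a morphism $u\colon V \to A$ to a semi-abelian variety $A$, via the log-Albanese construction of Noguchi–Iitaka: the log irregularity and the existence of enough independent closed log $1$-forms produces a quasi-Albanese map whose cotangent pullback recovers the span of the $\omega_i$.

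Next I would analyze the image $u(W)$ inside $A$. Since $V(\calO_S)$ maps to $S$-integral points of $A$ (integrality is preserved under morphisms of pairs, after spreading out over $\Spec \calO_{K,S'}$ for a larger finite set $S'$), the set $u(W(\calO_S))$ is a set of integral points on the subvariety $u(W) \subseteq A$, Zariski-dense in $u(W)$. By the Faltings–Vojta theorem on integral points on semi-abelian varieties, $u(W)$ must be a translate of a semi-abelian subvariety of $A$ — in particular $u(W)$ is itself (up to translation) a semi-abelian variety, hence geometrically irreducible and smooth. The remaining and main point is to show that $u|_W \colon W \to u(W)$ is an \emph{isomorphism}, which is where the hypothesis that $\Omega^1_X(\log D)$ — not merely $\Omega^1_X$ — is globally generated does the essential work: the pullback $u^*\Omega^1_A \to \Omega^1_X(\log D)$ is surjective, so $u^*\Omega^1_A|_W \to \Omega^1_W(\log E)$ is surjective, forcing $u|_W$ to be unramified as a map of log pairs; combined with a degree/finiteness argument (a finite unramified map onto a semi-abelian variety that is generically injective on integral points, hence injective by density, must be an isomorphism onto its image which is a covering space of the semi-abelian target, hence again semi-abelian). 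This shows $W \isom u(W)$ is a semi-abelian variety and is geometrically irreducible.

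I expect the main obstacle to be the construction and control of the quasi-Albanese/log-Albanese morphism $u$ in the generality of an arbitrary log smooth pair: one must ensure the chosen global log $1$-forms can be taken closed (equivalently, that the differential $d\colon H^0(\Omega^1_X(\log D)) \to H^0(\Omega^2_X(\log D))$ does not obstruct), and that the resulting $A$ has the right dimension so that $u$ is genuinely "cotangent-surjective" onto $\Omega^1_X(\log D)$ along $W$. This is handled by Iitaka's theory of the quasi-Albanese variety together with the logarithmic Hodge-theoretic fact that global log $1$-forms on a smooth variety are automatically closed (Deligne), so that $H^0(X,\Omega^1_X(\log D))$ has dimension equal to the log irregularity $\bar q = \dim A$; global generation then translates precisely into surjectivity of $u^*\Omega^1_A \to \Omega^1_X(\log D)$. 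The secondary technical point — preservation of $S$-integrality under $u$ and the passage between "$\calO_S$-points" and "$\calO_{S'}$-points" after enlarging $S$ — is routine via spreading out and does not affect the finiteness/structure conclusion.
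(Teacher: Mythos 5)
Your proposal follows essentially the same route as the paper: use global generation to get a surjection from the pullback of $\Omega^1_{\calA_V}$ under the quasi-Albanese map onto the log cotangent sheaf, apply Vojta's theorem to conclude the image of each component of $\overline{V(\calO_S)}$ is a translate of a semi-abelian subvariety, and conclude via the fact that \'etale covers of semi-abelian varieties are semi-abelian (the paper cites Fujino for both the quasi-Albanese construction and this last fact). The only superfluous step is your attempt to show $u|_W$ is an isomorphism onto its image via density of integral points --- the paper (following Moriwaki's Theorem B) only needs that $W$ is a finite \'etale cover of a semi-abelian variety, which your unramifiedness argument already gives.
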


The main tools used to prove the above results are the theory of semi-abelian varieties and the quasi-Albanese, as well as Vojta's generalization \cite{vojta1} of Faltings' theorem on subvarieties of abelian varieties to semi-abelian varieties.

Finally, we wish to discuss the following example, which discusses hyperbolicity in families. We show that the property of having all subvarieties of log general type is \emph{not} a closed condition. Moreover, we stress that this property may not hold in the special fiber even if every curve outside of the double locus is of log general type. 

\begin{remark}(See Example \ref{ex:counterexample}) We show the existence of a stable family $(X,D) \to B$ where $B$ is a curve, the generic fiber $(X_\eta, D_\eta) \subset \PP^3$ is a smooth hyperbolic surface, while the special fiber $(X_0, D_0)$  is not. It contains a single curve which is \emph{not} of log general type, and this curve is contained in the double locus.  \end{remark}

\subsection{Uniformity}\label{sec:introunif}
Recall that (one form of) Lang's conjecture (see Conjecture \ref{conj:BL}) predicts that varieties of general type do not contain a Zariski dense set of rational points. One of the most intriguing consequences of Lang's Conjecture, proved in \cite{CHM}, is that the number of rational points on curves of genus $g \geq 2$ over a number field $K$ is not only finite, but is also bounded by a constant depending only on $g$ and $K$ (see Theorem \ref{conj:unif}). 
 The original ideas of \cite{CHM} have since been extended and generalized leading to proofs that similar uniformity statements, conditional on the Lang Conjecture, hold in higher dimensions (see Section \ref{subsec:rat} for more details).

A natural question addressed in \cite{Aell} is the following: does the Lang-Vojta Conjecture (Conjecture \ref{conj:LV}) imply similar uniformity statements for integral points? Abramovich showed this cannot hold unless one restricts the possible models used to define integral points. This led Abramovich to define the notion of \emph{stably integral points} (see Definition \ref{def:stab_c}) to prove uniformity results, conditional on the Lang-Vojta Conjecture, for integral points on elliptic curves, and together with Matsuki in \cite{AM} for integral points on principally polarized abelian varieties (PPAVs). 

\begin{remark} Roughly speaking, stably integral points are integral points which remain integral after stable reduction (see Definition \ref{def:stab_c} for a precise definition). \end{remark}

 Thus the final goal of this paper is to generalize uniformity statements for stably integral points to pairs of log general type (see Theorem \ref{thm:mainthm}). We generalize \cite{Aell} (elliptic curves with punctured origin) and  related results can be found in \cite{CHM} ($g \geq 2$), and e.g. \cite{evertse} in the case of $g = 0$ with marked points. Unless stated otherwise, $K$ will denote a number field, and $S$ a finite set of places of $K$ containing the Archimedean ones. 

 \begin{theorem}[see Theorem \ref{th:curves}]
  \label{th:u_curve}
Assume the Lang-Vojta Conjecture \ref{conj:LV}. If $(C,D)$ is an irreducible stable pointed curve over $K$, then the set of stably $S$-integral points on $C$ is uniformly bounded.
\end{theorem}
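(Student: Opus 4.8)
The plan is to run the moduli-theoretic uniformity argument of Caporaso--Harris--Mazur \cite{CHM}, in the form Abramovich \cite{Aell} used for elliptic curves, for an arbitrary pointed stable curve $(C,D)$; write $2g-2+n>0$ for its stability invariant (the condition making $(C,D)$, and all of its subpairs, of log general type). The first step is purely formal: translate ``stably $S$-integral point'' into moduli. A pointed stable curve $(C,D)/K$ is classified by a point of $\overline{\mathcal M}_{g,n}$, which --- after passing to a finite level cover so as to work with a fine moduli scheme, and after a controlled enlargement of $S$ absorbed into the final constant --- extends, by the definition of stable $S$-integrality (Definition \ref{def:stab_c}), to a morphism $\Spec\calO_{K,S}\to\overline{\mathcal M}_{g,n}$, the stable model. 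Marking an extra point then identifies the set of stably $S$-integral points of $(C,D)$ with the set of $\calO_{K,S}$-integral points, relative to the boundary $\Delta=\sigma_1+\dots+\sigma_n$, of the fibre over $[(C,D)]$ of the universal curve $\pi\colon\overline{\mathcal M}_{g,n+1}\to\overline{\mathcal M}_{g,n}$: a stably integral point is exactly a section of the stable model disjoint, over all of $\Spec\calO_{K,S}$, from the markings and from the relative singular locus. The reason one insists on \emph{stable} integrality is precisely that this realizes the points as genuine integral points on a \emph{proper} family of log pairs --- the moduli machinery needs properness, and indeed uniformity is false for ordinary integral points \cite{Aell}.

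\textbf{Fibered powers and correlation.} For $r\ge1$ let $\pi^{(r)}\colon\mathcal W^{(r)}\to\overline{\mathcal M}_{g,n}$ be the $r$-fold fibered power of $\pi$, with boundary $\Delta^{(r)}$ (the pullbacks of $\Delta$ together with the partial diagonals); its total space sits inside $\overline{\mathcal M}_{g,n+r}$. The key geometric input is a \emph{logarithmic correlation theorem}: since each fibre is of log general type, there is $r_0=r_0(g,n)$ such that for all $r\ge r_0$, all irreducible $B\subseteq\overline{\mathcal M}_{g,n}$, and all irreducible $W\subseteq\mathcal W^{(r)}\times_{\overline{\mathcal M}_{g,n}}B$ that dominate $B$ and lie neither in $\Delta^{(r)}$, nor in a partial diagonal, nor over a fixed lower-dimensional degeneracy locus $\mathcal B\subsetneq\overline{\mathcal M}_{g,n}$, the pair $W$ (with induced boundary, suitably resolved) is of log general type. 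This is the pointed, logarithmic form of the fibered power theorem of \cite{CHM} and Abramovich; its proof uses that $K_{\overline{\mathcal M}_{g,n+1}}+\Delta$ is $\pi$-ample (again by $2g-2+n>0$), so that on a sufficiently high fibered power the log canonical class of a horizontal $W$ becomes big after subtracting the bounded contribution of the base, together with --- since the fibered powers and the moduli boundary are genuinely singular --- the extension theorems for reflexive differentials \cite{diff,GK}, the positivity theorems of Campana--P\u{a}un \cite{cp}, the lc/slc criteria of Theorem \ref{thm:introample} and Corollary \ref{cor:slcample}, and, to dispose of the non-general-type horizontal subvarieties, Vojta's semiabelian generalization of Faltings's theorem \cite{vojta1} applied via the quasi-Albanese of the fibres. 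Proving this logarithmic correlation theorem --- above all, identifying $\mathcal B$ and verifying that it maps into a proper closed subset of the base --- is the principal obstacle.

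\textbf{Conclusion by Noetherian induction and Lang--Vojta.} Suppose the asserted bound fails. Spreading out and pigeonholing, one obtains for every $N$ an irreducible closed $B\subseteq\overline{\mathcal M}_{g,n}$ over $K$ with a Zariski-dense set of $K$-points $b$ whose fibres $(C_b,D_b)$ carry at least $N$ stably $S$-integral points; by Noetherian induction on $\dim\overline{\mathcal M}_{g,n}$ we may take $B$ minimal and independent of $N$, hence $N$ arbitrarily large (so $N\ge r_0$) and $B\not\subseteq\mathcal B$. For each such $b$, any $r_0$ distinct stably integral points on $(C_b,D_b)$ yield an $\calO_{K,S}$-integral point of $\mathcal W^{(r_0)}_b$ off $\Delta^{(r_0)}$ and off the partial diagonals; as $b$ varies over the dense set these are Zariski dense in some irreducible $W\subseteq\mathcal W^{(r_0)}\times_{\overline{\mathcal M}_{g,n}}B$ dominating $B$ (if they lay in a union of non-dominating subvarieties we would be reduced to a proper closed subset of $B$, against minimality). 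By the logarithmic correlation theorem $W$ is of log general type, so the Lang--Vojta Conjecture \ref{conj:LV} forbids its integral points from being Zariski dense --- a contradiction. The base case $\dim B=0$ is a single pointed stable curve, where finiteness of $S$-integral points on $C\setminus D$ is Siegel's theorem (or Faltings's, when $D=\emptyset$). Tracing the argument, the resulting uniform bound depends only on $g$, $n$, $[K:\Q]$ and $\#S$.
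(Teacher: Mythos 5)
Your overall architecture --- realize stably integral points as integral points on fibers of a global family of stable pointed curves, pass to a high fibered power, invoke a correlation statement, apply Lang--Vojta, and finish by Noetherian induction on the base --- is the same as the paper's (which runs the argument on a Hilbert-scheme family of pluri-log-canonically embedded stable curves rather than on $\overline{\mathcal M}_{g,n}$ with level structure; that difference is cosmetic). The problem is the form of the correlation statement you invoke. You require that \emph{every} irreducible horizontal $W$ in a high fibered power, once it avoids the boundary, the partial diagonals, and the preimage of a degeneracy locus $\mathcal B$ \emph{in the base}, is of log general type, and you then apply Lang--Vojta to $W$ itself. That statement is strictly stronger than the Fibered Power Theorem (Theorem \ref{thm:fpt}, from \cite{fpt}, following \cite{CHM} and \cite{Afpt}), which only produces a \emph{dominant morphism} $(C^n_B,D_n)\to(W_0,\Delta_0)$ onto some positive-dimensional pair of openly log general type. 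The stronger statement is false: a non-isotrivial family of stable curves over a rational base $B$ can carry a rational multisection $C\to B$, and the off-diagonal components of $C\times_B\cdots\times_B C$ are horizontal subvarieties of the fibered power, not contained in any partial diagonal, dominating all of $B$ (hence not excluded by any proper $\mathcal B\subsetneq B$), and not of log general type. Your sketched proof via $\pi$-ampleness of $K+\Delta$ and Campana--P\u{a}un cannot repair this, since relative positivity says nothing about such multisections. So the $W$ produced by your pigeonhole step need not be of log general type, and the contradiction does not follow.

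The fix is exactly the paper's route (Proposition \ref{prop:stably_curves_correl} plus Lemma \ref{lemma:n-corr}): apply Lang--Vojta not to $W$ but to the \emph{target} $(W_0,\Delta_0)$ of the dominant map supplied by Theorem \ref{thm:fpt}, conclude that its integral points lie in a proper closed subset $Z$, and pull back (using the spreading-out Lemma \ref{lemma:spread}) to get that all integral points of the fibered power lie in the proper closed subset $F_n=h^{-1}(Z)$, i.e.\ the set of stably integral points is $n$-correlated. The combinatorial Lemma \ref{lemma:n-corr} of \cite{CHM} then converts correlation into a uniform bound on an open subset of the base, and Noetherian induction finishes as you describe. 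In short: replace ``$W$ is of log general type, apply Lang--Vojta to $W$'' by ``the integral points of the whole fibered power are contained in a proper closed subset,'' which is what the available fibered power theorem actually yields.
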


To obtain analogous results in higher dimensions one needs to tackle an extra problem: stable models do not exist due to the absence of a semistable reduction theorem over arbitrary Dedekind domains.  As a result, there is no canonical choice of model for higher dimensional algebraic varieties. 

Instead, using recent results on moduli of stable pairs (see Definition \ref{def:stab_pair}), the higher dimensional analogue of the moduli of stable pointed curves, we define ``good'' models which play the role of stable models for curves, and actually generalize them (see Section \ref{sec:models}). This allows us to define a notion of \emph{moduli-stably-integral points} ($ms$-integral points, see Definition \ref{def:ms}), integral points which are integral with respect to a fixed model of the moduli space.

The second result we obtain is that $ms$-integral points on families of stable pairs lie in a subscheme whose degree is uniformly bounded, which generalizes a theorem of Hassett \cite[Theorem 6.2]{Hassett}. 

\begin{theorem}[see Corollary \ref{cor:subscheme}]  \label{th:deg}
  Assume the Lang-Vojta Conjecture \ref{conj:LV}. Suppose that $f:(X,D)\to B$ is a stable family over a smooth variety $B$ with integral, openly canonical (see Definition \ref{def:olc}), and log canonical general fiber over $K$. For all $b \in B(K)$, there exists a proper closed subscheme $A_b$ containing all $ms$-integral points of $X_b$ whose irreducible components have uniformly bounded degree. 
\end{theorem}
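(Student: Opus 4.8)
The plan is to reduce the statement to the corresponding uniformity theorem for rational points on the base of the family, combined with the log general type property guaranteed by Theorem \ref{thm:introample}. First I would recall the key structural input: since $f:(X,D)\to B$ is a stable family with integral, openly canonical, and log canonical general fiber, for each $b\in B(K)$ the fiber $(X_b, D_b)$ is a (possibly singular) stable pair of log general type. The ``openly canonical'' hypothesis ensures that the interior $X_b\setminus D_b$ has log canonical (indeed canonical away from $D_b$) singularities, so that the reflexive log differentials behave well and, crucially, $ms$-integral points of $X_b$ land in a locus where we have positivity control. The notion of $ms$-integral point ties the integral structure on $X_b$ to a fixed model of the moduli space $\overline{\mathcal{M}}$ of stable pairs into which $B$ maps (via the moduli map $B\to \overline{\mathcal M}$ induced by $f$).

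The core of the argument is a fibered-power / spreading-out construction in the style of Caporaso–Harris–Mazur and Hassett. I would consider the $N$-fold fibered power $(X^N_B, D^N) \to B$ for large $N$, and apply the Lang–Vojta Conjecture \ref{conj:LV} to this family: because each fiber is of log general type (Theorem \ref{thm:introample} applied fiberwise, using that the fibers are log canonical and the relevant subvarieties are not contained in $D$ or $\mathrm{Sing}$), the $ms$-integral points of $X^N_B$ over a given $b$ do not form a Zariski dense set, and in fact by a Noetherian/bootstrapping argument on $N$ there is a proper closed subscheme $W_N\subset X^N_B$ (flat over an open dense $B^\circ\subset B$, after shrinking and stratifying) containing all $ms$-integral points of all fibers $X^N_b$ for $b\in B^\circ(K)$. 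The output of this construction, upon projecting back down, is a proper closed subscheme $A_b\subsetneq X_b$ containing the $ms$-integral points of $X_b$; the point of passing to fibered powers is precisely that $A_b$ then has \emph{bounded} complexity, since $W_N$ is a fixed scheme over the (Noetherian) base. I would then handle the finitely many components of $B\setminus B^\circ$ by Noetherian induction on $\dim B$, so that the bound on the degree of the components of $A_b$ is uniform over all of $B(K)$. The degree bound itself follows because the components of $A_b$ arise as fibers (or images of fibers) of the finitely many closed subschemes $W_N$ over the strata of $B$, and the fiber degrees of a fixed projective family are bounded by flatness after stratifying.

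A few points require care. One must verify that the $ms$-integral structure is compatible with passing to fibered powers — i.e., that an $ms$-integral point of $X_b$ gives an $ms$-integral point of $X^N_b$ — which should follow from the construction of the good models in Section \ref{sec:models} being functorial for fibered products, but this needs to be checked against Definition \ref{def:ms}. One must also ensure that Theorem \ref{thm:introample} genuinely applies fiberwise: the hypothesis there is that $(X,D)$ is log canonical with almost ample $\Omega^{[1]}_X(\log D)$, so one needs the \emph{fibers} to have almost ample log cotangent sheaf — this is where I expect the main obstacle to lie, since almost ampleness is not obviously preserved in families and may need to be imposed as part of (or deduced from) the hypotheses, or one needs to pass to the general fiber over $K$ and argue that log general type, being the conclusion we actually use, spreads out to an open set of fibers. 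The safest route is to invoke Theorem \ref{thm:introample} only for the general fiber, obtain log general type there, and then use that log general type is an open condition in families (by semicontinuity of plurigenera of the log canonical divisor, valid because the family is stable hence the $K_{X/B}+D$ is relatively ample-ish / $\Q$-Cartier) to conclude that $(X_b, D_b)$ is of log general type for all $b$ in a dense open, reducing again by Noetherian induction. Finally, one should note that the entire argument is conditional on Conjecture \ref{conj:LV}, applied not to $(X,D)$ itself but to all its fibered powers, each of which is of log general type by the product structure of the log canonical bundle.
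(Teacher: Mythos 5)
Your proposal has the right Caporaso--Harris--Mazur/Hassett flavor, but it is missing the one ingredient the paper's proof actually hinges on: the Fibered Power Theorem (Theorem \ref{thm:fpt}). You apply the Lang--Vojta Conjecture to $X^N_B$ ``because each fiber is of log general type,'' but Conjecture \ref{conj:LV} is a statement about the total space being of openly log general type, and a fibered power of a family whose fibers are of log general type need not itself be of log general type (nor does applying the conjecture fiber-by-fiber give any uniformity, since the exceptional closed subset would then depend on $b$). The paper instead invokes Theorem \ref{thm:fpt} to produce, for $n\gg 0$, a dominant morphism $(X_B^n,D_n)\to(W,\Delta)$ onto a positive-dimensional pair of openly log general type, applies Lang--Vojta to $(W,\Delta)$, and pulls back (via Lemma \ref{lemma:spread}) to obtain a single proper closed subset $Z_n\subset X_B^n$ containing all $ms$-integral points of all the fibered powers. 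Relatedly, your appeal to Theorem \ref{thm:introample} and to almost ampleness of the log cotangent sheaf is both unavailable (it is not among the hypotheses of this theorem) and unnecessary: positivity of the log cotangent only enters later, for the subvariety property and the final uniformity statement, not for this degree bound.

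The second gap is the descent from $Z_n$ back to the fiber $X_b$, which you compress into ``upon projecting back down.'' The image of $Z_n$ under a projection $X_B^n\to X_B$ can perfectly well be all of $X_B$, so no proper subscheme of $X_b$ is obtained this way. The paper's actual argument is the inductive CHM-type prune: one defines maximal closed subsets $Z_j\subset X_B^j$ compatible with $Z_n$ under the projections, sets $U_j$ to be their complements, takes the minimal index $\hat\jmath$ at which $U_{\hat\jmath}$ contains no $ms$-integral point of $X_b^{\hat\jmath}$, and uses an $ms$-integral point $u$ at the previous level to identify $\pi_{\hat\jmath}^{-1}(u)\cong X_b$ and trap its $ms$-integral points in a proper subvariety $A_b$ cut out by $Z_{\hat\jmath}$, whose degree is bounded by a constant depending only on the family. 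Your flatness/stratification remark gives the right intuition for why the degrees are bounded once such a subvariety is exhibited, and the Noetherian induction on the base that you describe is indeed how the paper passes from an open subset of $B$ to all of $B$ in Corollary \ref{cor:subscheme}; but without the Fibered Power Theorem and the descent through the $Z_j$, the core of the proof is missing.
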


From here, one wishes to conclude uniformity for higher dimensional varieties. The first obstacle to proving uniformity, which also appears in the case of rational points, is the presence of curves with non-positive Euler characteristic, which contain infinitely many integral points. One natural approach to circumvent this problem, as motivated above, is to ask for positivity of the log cotangent sheaf. From here, the standard way to conclude uniformity from Theorem \ref{th:deg}, is to run an induction argument once you answer ``yes'' to the following question, thus overcoming the second obstacle:

\begin{question} Do $ms$-integral points satisfy the \emph{subvariety property} -- i.e. are $ms$-integral points for a pair $(X,D)$ lying on a pair $\big(Y,E:= Y \cap D\big)$ with $Y \subset X$ also $ms$-integral points for $(Y,E)$?  \end{question}

E.g., if $(X,D)$ is a quasi-projective surface, it is not a priori clear that an $ms$-integral point of $(X,D)$ which lies on a curve $(Y, E)$, where $E = Y \cap D$ is also an $ms$-integral point for $(Y,E)$!  \\

The above question was answered affirmatively for abelian surfaces using N\'eron models \cite{AM}. Without having an explicit model to work with in higher dimensions, we reduce the question to a sufficient geometric criterion. Since the notion of stably integral points (or more generally $ms$-integral points) requires a compact moduli space, even if we are interested in uniformity for \emph{smooth} varieties, we are forced to consider degenerations of our smooth variety to \emph{singular} ones. If $\dim X > 2$, then a degeneration of $X$ can have a non-normal locus of positive dimension, and it is a highly non-trivial problem to show that there are no non-hyperbolic components contained in this locus. 

However, assuming positivity of the log cotangent sheaf, we are able to verify the subvariety property for surfaces (see Section \ref{sec:descent}), and thus prove uniformity.

\begin{theorem}[see Corollary \ref{cor:unif_2}]\label{thm:mainthm}
  \label{th:d2}
Assume the Lang-Vojta Conjecture. Let $(X,D)$ be a log canonical stable surface pair with good model $(\calX, \calD)$ such that \begin{enumerate}
\item $D = \sum D_i$ is an effective $\Q$-Cartier divisor with $K_X + D$ ample and
\item each fiber of $(\calX, \calD)$ has almost ample log cotangent sheaf (see Definition \ref{def:dbarample}),
\end{enumerate}
then there exists a constant $N = N(K,S,v)$ where $v$ is the volume of $(X,D)$, such that the set of $ms$-integral points of $(X,D)$ has cardinality at most $N$, i.e.
  \[
    \# (X \setm D)(\calO_{K,S}^{ms}) \leq N = N(K,S,v).
  \]
\end{theorem}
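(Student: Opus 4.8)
The plan is to combine the three main strands of the paper: the degeneracy/uniformity result for $ms$-integral points (Theorem~\ref{th:deg} = Corollary~\ref{cor:subscheme}), the fact that almost ampleness of the log cotangent sheaf forces all relevant subvarieties to be of log general type (Theorem~\ref{thm:introample} and its slc/surface refinements), and the subvariety property for $ms$-integral points verified for surfaces in Section~\ref{sec:descent}. First I would apply Corollary~\ref{cor:subscheme} to the given stable family $(\calX,\calD)\to B$ with the fiber $(X,D)$: since $(X,D)$ is log canonical, the general fiber is log canonical, and --- after checking the ``openly canonical'' hypothesis, which should follow from log canonicity together with $K_X+D$ ample as in Definition~\ref{def:olc} --- we obtain a proper closed subscheme $A \subsetneq X$ containing all $ms$-integral points of $(X,D)$, whose irreducible components have degree bounded by a constant depending only on $(K,S,v)$. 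Because $\dim X = 2$, each irreducible component $Y$ of $A$ is a point or a curve, and there are at most a bounded number (bounded in terms of $v$ and the bounded degree) of such components.

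Next I would dispose of the zero-dimensional components trivially, and for each curve component $Y\subset A$ set $E := (Y\cap D)_{red}$. Invoking the subvariety property established in Section~\ref{sec:descent} (this is exactly the affirmative answer to the Question preceding the theorem, in the surface case), the $ms$-integral points of $(X,D)$ lying on $Y$ are $ms$-integral points of $(Y,E)$. Then I would apply the surface refinement of Theorem~\ref{thm:introample} (the Corollary after Remark~\ref{prop:ampleness}, using hypothesis~(2) that each fiber, in particular $(X,D)$ itself, has almost ample log cotangent sheaf, together with the fact that $Y$ is not contained in $D$ — the case $Y\subset D$ contributes no integral points): this shows $(Y,E)$ is a pair of log general type, i.e. a curve of negative Euler characteristic. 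Now Theorem~\ref{th:u_curve} (uniformity for stably integral points on stable pointed curves, conditional on Lang--Vojta) — or rather the version for $ms$-integral points on the normalization of $Y$, which for a curve coincides with stable reduction — bounds the number of $ms$-integral points on each $(Y,E)$ by a constant $N_0=N_0(K,S,\chi(Y,E))$; and $\chi$ is controlled by the bounded degree of $Y$, hence ultimately by $v$. Summing over the bounded number of components gives the desired bound $N=N(K,S,v)$.

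The step I expect to be the main obstacle is reconciling the two notions of "model" appearing in Theorem~\ref{th:deg} and Theorem~\ref{th:u_curve}: the former produces $A_b$ with respect to a fixed good model of a family of \emph{surfaces}, while the curve uniformity theorem is phrased for stable \emph{pointed curves} and their canonical stable models over a Dedekind base. When we restrict an $ms$-integral point from $(X,D)$ to a curve component $(Y,E)$, we must check that it remains integral with respect to the \emph{stable} model of $(Y,E)$ — this is precisely the content of the subvariety property for $ms$-integral points in the surface case, and it is where the semistable reduction theorem for curves (available, unlike in higher dimensions) is used to compare the restriction of the good model of $(\calX,\calD)$ with the stable model of the curve. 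A secondary technical point is ensuring the hypotheses of Corollary~\ref{cor:subscheme} (integrality, openly canonical, log canonical general fiber) are genuinely implied by the hypotheses as stated here, and that the degree bound on components of $A_b$ translates into a bound on the Euler characteristics $\chi(Y,E)$ uniform in the volume $v$; both should be routine given the boundedness of stable surface pairs of fixed volume, but they need to be spelled out.
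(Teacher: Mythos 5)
Your proposal is correct and follows essentially the same route as the paper: Corollary \ref{cor:subscheme} produces the degree-bounded subscheme, the zero-dimensional part is bounded directly, the curve components are handled via the subvariety property of Section \ref{sec:descent} (Corollary \ref{cor:subvarietyprop}) together with the curve uniformity Theorem \ref{th:curves}, and the dependence only on $(K,S,v)$ comes from running the argument on the tautological family over the moduli space of stable surface pairs of volume $v$. The obstacle you flag — descending integrality from the good model of the surface to the stable model of a curve component, including non-stable and double-locus curves — is exactly what the paper's Sections \ref{sec:subv}--\ref{sec:descent} are devoted to, so your assessment of where the real work lies is accurate.
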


One main ingredient in this paper, following the ideas of \cite{CHM}, is a Fibered Power Theorem, proved in \cite{fpt} (see Theorem \ref{thm:fpt}), which gives the analogue for pairs of the main Theorem of \cite{Afpt}.

\begin{remark}\leavevmode
\begin{enumerate}
\item While we make a choice of model, we show that (up to changing the constants involved), the results do not depend on choice of model (see Remark \ref{remark:models}).
\item Assuming the Lang-Vojta conjecture, our methods give a proof for uniformity under \emph{any assumption} that guarantees that all subvarieties are of log general type. Asking for \emph{almost ample log cotangent} is natural from a geometric standpoint (see Remark \ref{rmk:uniformity}). \end{enumerate}\end{remark}

\subsection*{Appendices} In Appendix \ref{sec:stacks}, we define the stack of stable pairs over $\Q$. This is probably known, but we include it for lack of reference. In Appendix \ref{app:sheaves} we show there exists an almost ample log cotangent sheaf on the universal family of the moduli stack. Appendix \ref{app} gives an alternative definition of $ms$-integral points that does not depend on the choice of models of stacks. In Appendix \ref{sec:apphyp} we present Example \ref{ex:counterexample}.

\begin{remark} This paper originally appeared with the title ``Uniformity for integral points on surfaces, positivity of log cotangent sheaves and hyperbolicity.'' \end{remark}

\subsection*{Acknowledgements}
We thank Brendan Hassett for suggesting this problem and Dan Abramovich for his constant support. This paper has benefited from discussions with Jarod Alper, Dori Bejleri, Damian Brotbek, Ya Deng, Gabriele Di Cerbo, Carlo Gasbarri, J\'anos Koll\'ar, S\'andor Kov\'acs,  Wenfei Liu, Yuchen Liu, Steven Lu, Zsolt Patakfalvi, Fabien Pazuki, S\"onke Rollenske, David Rydh, Karl Schwede, Jason Starr, and Bianca Viray. We thank Ariyan Javanpeykar for discussions leading to the notion of \emph{good models}, Max Lieblich for help verifying Example \ref{ex:counterexample}, and Erwan Rousseau for pointing us to \cite{cp}. We thank the referees for their comments and suggestions which greatly improved this paper. Research of AT supported in part by funds from NSF grant DMS-1553459. Research of KA supported in part by funds from NSF grant DMS-1162367/11500528 and an NSF Postdoctoral fellowship. 

\section{Previous results}\label{sec:previous}
In this section we discuss previous results on hyperbolicity and uniformity.

\subsection{Some hyperbolicity results}
A motivating conjecture usually associated to Green, Griffiths, and Lang is the following:

\begin{conjecture} Let $X$ be a projective geometrically integral variety over a number field $K$. The following are equivalent:
\begin{enumerate}
\item $X$ is arithmetically hyperbolic,
\item $X_{\mathbb{C}}$ is Brody hyperbolic, and 
\item every integral subvariety of $X$ is of general type.
\end{enumerate}
\end{conjecture}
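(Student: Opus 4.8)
The stated equivalence is the Green--Griffiths--Lang conjecture, which is wide open; accordingly the ``proof'' I can propose is really a plan for attacking the individual implications, together with a frank account of where each one stands. The plan is to split the claimed cycle into its arithmetic half (everything touching (1)) and its analytic half (everything touching (2)), to treat the two hard implications $(3)\Rightarrow(1)$ and $(3)\Rightarrow(2)$ by positivity arguments, and to treat the converses $(1)\Rightarrow(3)$ and $(2)\Rightarrow(3)$ by their contrapositives via fibration theory.

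First, $(3)\Rightarrow(1)$ is a strong form of the Bombieri--Lang conjecture. I would try to reduce an arbitrary $X$ with all subvarieties of general type to the (semi-)abelian case using the Albanese morphism and Ueno--Kawamata fibration theory, and then invoke Faltings' theorem on rational points of subvarieties of abelian varieties (compare the semi-abelian arguments behind Theorems \ref{thm:moriwaki1} and \ref{thm:moriwaki2}); this succeeds unconditionally when $X$ is a curve (Mordell/Faltings) or maps finitely to an abelian variety, but no such reduction exists in general. Symmetrically, $(3)\Rightarrow(2)$ is the analytic Green--Griffiths--Lang statement: one would like to manufacture, from the general-type hypothesis alone, a big bundle of (log) jet differentials with negative twist, show that every entire curve is annihilated by its global sections, conclude that the image of an entire curve lies in a proper subvariety, and induct on dimension; the bigness input is exactly what is missing in general, although it is known in several special situations (e.g. generic hyperbolicity of high-degree hypersurfaces).

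For the converses, the natural move is the contrapositive: if some $Y\subseteq X$ fails to be of general type, produce — possibly after a finite extension of $K$ — either a Zariski-dense set of rational points or a Zariski-dense entire curve on $Y$. Using Iitaka/Ueno fibrations one reduces to the case in which $Y$ fibers over an abelian variety with zero-dimensional base image and fibers that are points or of general type, i.e. to varieties dominated by a nontrivial (semi-)abelian quotient, where density of rational points and of entire curves is classical. The residual obstruction is the case of a positive-dimensional variety of Kodaira dimension $0$ with trivial Albanese (certain Calabi--Yau and hyperk\"ahler varieties), where potential density of rational points and density of entire curves are themselves open. Finally $(1)\Leftrightarrow(2)$ — arithmetic versus Brody hyperbolicity — is itself conjectural, the evidence being the exact parallel between Faltings' and Brody--Bloch--Ochiai--Kawamata theorems in the curve and abelian cases.

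The main obstacle is therefore structural rather than technical: each of the four implications, and even the bare equivalence $(1)\Leftrightarrow(2)$, is a major open problem, so the honest content of a ``proof'' consists of the reductions above together with the cases — curves, subvarieties of semi-abelian varieties, and low-dimensional instances of Bombieri--Lang — in which they are theorems. I do not expect to remove this obstacle; stating the conjecture here serves to motivate the unconditional results of the paper (positivity of the log cotangent sheaf forcing log general type and finiteness of integral points) and the conditional uniformity statements obtained under Lang--Vojta.
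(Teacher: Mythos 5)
This statement is the Green--Griffiths--Lang conjecture; the paper states it purely as motivation and offers no proof, so there is nothing to compare your attempt against. You have correctly recognized this: your write-up is an accurate survey of the known implications and reductions (Faltings/Vojta for subvarieties of semi-abelian varieties, jet-differential methods for the analytic direction, Iitaka--Ueno fibrations for the contrapositives) rather than a proof, and it honestly flags that each implication, including the bare equivalence of arithmetic and Brody hyperbolicity, remains open. This is the right way to handle the statement, and it matches the role the conjecture plays in the paper, namely motivating the unconditional positivity results and the conditional uniformity theorems.
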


As mentioned in the introduction, it is somewhat classical that varieties with positive cotangent bundle enjoy many hyperbolicity properties (see e.g. \cite{debarre}). In particular, we have the following.

\begin{theorem}\label{thm:amplecot1} If $X$ is a smooth projective variety with ample $\Omega^1_X$ then \begin{enumerate}
\item all subvarieties of $X$ are of general type (see \cite[6.3.28]{laz2}), and 
\item $X$ is Brody (and thus Kobayashi) hyperbolic (see \cite[3.1]{demailly}).
\end{enumerate} \end{theorem}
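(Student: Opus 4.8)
The plan: I would treat the two assertions separately, assembling standard positivity facts.

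\textbf{Assertion (1).} Let $Y\subseteq X$ be irreducible of dimension $d$. First I would dispose of the smooth case: the conormal sequence $0\to N^\vee_{Y/X}\to\Omega^1_X|_Y\to\Omega^1_Y\to 0$ realizes $\Omega^1_Y$ as a quotient of $\Omega^1_X|_Y$; since the restriction of an ample bundle to a subvariety is ample and a quotient of an ample bundle is ample, $\Omega^1_Y$ is ample, hence so is $\omega_Y=\bigwedge^d\Omega^1_Y$, i.e. $K_Y$ is ample and $Y$ is of general type. For general $Y$ I would pass to a resolution $\mu\colon Y'\to Y$ with composite $g\colon Y'\to X$. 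The differential $g^*\Omega^1_X\to\Omega^1_{Y'}$ is generically surjective (it is surjective wherever $g$ is an immersion), so taking $d$-th exterior powers produces a nonzero morphism $g^*\bigl(\bigwedge^d\Omega^1_X\bigr)\to\bigwedge^d\Omega^1_{Y'}=\omega_{Y'}$; its image has the form $\omega_{Y'}\otimes\mathcal I$ with $\mathcal O_{Y'}/\mathcal I$ supported in codimension one, so that $K_{Y'}=B+E$ with $E\ge 0$ effective and $B$ a line bundle such that $B$, twisted by a codimension-$\ge 2$ ideal, is a quotient of the ample-on-$Y$ bundle $\bigwedge^d\Omega^1_X$. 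I would then invoke the positivity theory of ample vector bundles --- precisely \cite[6.3.28]{laz2} --- to conclude that $B$, and therefore $K_{Y'}$, is big, so $Y$ is of general type.

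The step I expect to be the real obstacle is exactly this singular case. After resolving, $g^*\bigwedge^d\Omega^1_X$ is only big and nef on $Y'$ (the birational contraction destroys ampleness), so ``a quotient of an ample bundle is ample'' no longer applies and one genuinely needs that the rank-one quotient $B$ still has maximal Iitaka dimension; that assertion is the content of \cite[6.3.28]{laz2}. An alternative worth pursuing, to stay more elementary, is to first reduce to $Y$ normal: then $\mathrm{Sing}(Y)$ has codimension $\ge 2$, the quotient $\Omega^1_Y$ of the ample bundle $\Omega^1_X|_Y$ and its determinant are controlled across the singular locus, and one can argue with ample coherent sheaves directly on $Y$ with no resolution.

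\textbf{Assertion (2).} I would prove Brody hyperbolicity and then upgrade it. Ampleness of $\Omega^1_X$ is equivalent to ampleness of the tautological bundle $\mathcal O(1)$ on $\PP(\Omega^1_X)$; a positively curved hermitian metric on $\mathcal O(1)$ induces in the standard way a smooth Finsler metric on $X$ whose holomorphic sectional curvature is bounded above by a negative constant. For any holomorphic map $f\colon\C\to X$ I would restrict to discs $\Delta_R$ of radius $R\to\infty$ and apply the Ahlfors--Schwarz lemma, comparing the pulled-back Finsler metric with the Poincar\'e metric of curvature $-1$; this forces $f$ to be constant. Hence $X$ is Brody hyperbolic, and since $X$ is compact, Brody's reparametrization lemma promotes this to Kobayashi hyperbolicity. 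This is \cite[3.1]{demailly}; here the only delicate point is the construction of the Finsler metric and its curvature estimate, which is classical.
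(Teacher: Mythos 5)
The paper offers no proof of this statement at all --- it is quoted as classical, with assertion (1) deferred to \cite[6.3.28]{laz2} and assertion (2) to \cite[3.1]{demailly} --- and your sketch correctly reconstructs the standard arguments from precisely those sources: the conormal sequence plus ``restrictions and quotients of ample bundles are ample'' in the smooth case, resolution of singularities plus bigness of a generically surjective line-bundle quotient of the pullback of an ample bundle in the singular case, and the Finsler-metric/Ahlfors--Schwarz argument (upgraded by Brody's lemma on the compact $X$) for hyperbolicity. The only cosmetic point is that the bigness step you flag as the real obstacle is not literally \cite[6.3.28]{laz2} (that \emph{is} the statement being proved) but the underlying lemma that a line bundle receiving a generically surjective map from the pullback of an ample bundle under a generically finite morphism is big; with that reference adjusted, the proposal is sound and is exactly the argument the paper's citations point to.
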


It is expected that the rational points are also finite, and in this direction we have the following.

\begin{theorem}\cite[Theorem E]{moriwaki}\label{thm:amplecot2}
If $X$ is a smooth variety defined over a number field $k$ with ample and globally generated $\Omega^1_X$ then the set $X(k)$ is finite.
\end{theorem}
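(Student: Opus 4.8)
This is Moriwaki's theorem \cite{moriwaki}; the plan is to combine the Albanese map with Faltings' theorem on rational points of subvarieties of abelian varieties. First, since $\Omega^1_X$ is ample I may assume $X$ is smooth projective. I would then observe that global generation of $\Omega^1_X$ is exactly surjectivity of the evaluation map $H^0(X,\Omega^1_X)\tensor\OO_X\to\Omega^1_X$; because $\Char k=0$, the Albanese variety $A:=\mathrm{Alb}(X)$ has trivial cotangent bundle $\Omega^1_A\isom H^0(A,\Omega^1_A)\tensor\OO_A$ with $H^0(A,\Omega^1_A)\isomto H^0(X,\Omega^1_X)$, and under these identifications that evaluation map is precisely the codifferential of the Albanese morphism $a\colon X\to A$. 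Hence $a$ is unramified; being also proper it is quasi-finite, so finite. Setting $Y:=a(X)\subset A$, I obtain a finite surjection $a\colon X\to Y$ onto a subvariety of an abelian variety.

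Next I would apply Faltings' theorem to $Y\subset A$: the set $Y(k)$ is a finite union of sets $(B_i+c_i)(k)$ with each $B_i+c_i\subset Y$ a translated abelian subvariety. The key claim is that every $B_i+c_i$ is a point. To see this, set $Z_i:=a^{-1}(B_i+c_i)\subset X$; since $a$ is finite and unramified, so is $a|_{Z_i}\colon Z_i\to B_i+c_i$ (base change), and using the \'etale-local structure of unramified morphisms together with $\dim Z_i=\dim(B_i+c_i)$ one checks that $Z_i$ is smooth of dimension $\dim B_i$. On one hand $\Omega^1_{Z_i}$ is a quotient of the restriction $\Omega^1_X|_{Z_i}$ of an ample bundle, hence ample. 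On the other hand $a|_{Z_i}$ factors through $B_i+c_i\injects A$; since $\Omega^1_{B_i+c_i}\isom\OO^{\oplus\dim B_i}$ and $a|_{Z_i}$ is unramified, there is a surjection $\OO_{Z_i}^{\oplus\dim B_i}\surjects\Omega^1_{Z_i}$ of locally free sheaves of equal rank, hence an isomorphism. Thus $\Omega^1_{Z_i}$ would be simultaneously trivial and ample, forcing $\dim Z_i=0$. Consequently $Y(k)$ is finite, and since $a$ has finite fibers, $X(k)\subset a^{-1}(Y(k))$ is finite.

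The deep ingredient, which I would simply invoke, is Faltings' theorem. The geometric heart --- and the part most worth being careful about --- is the dichotomy above: first that global generation upgrades the Albanese map to a finite unramified morphism onto its image, and then that this unramifiedness descends to each $Z_i$ and trivializes $\Omega^1_{Z_i}$ against ampleness; the only genuinely technical point is the smoothness of $Z_i$, which for curves amounts to nothing more than Riemann--Hurwitz (a finite unramified cover of an elliptic curve is again elliptic). I would also emphasize that Theorem~\ref{thm:amplecot1}(1) by itself is insufficient here: a general-type subvariety of an abelian variety may still contain positive-dimensional translated abelian subvarieties, so ampleness of $\Omega^1_X$ --- and not merely of $K_X$ --- is what rules out the pieces $B_i+c_i$.
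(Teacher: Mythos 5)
The paper itself gives no proof of this statement: it is imported verbatim as Moriwaki's Theorem E, so there is nothing internal to compare against; the closest the paper comes is Section \ref{sec:moriwaki}, where Moriwaki's arguments are adapted to the log setting (Theorems \ref{thm:moriwakimain} and \ref{thm:moriwakisub}). Your route --- global generation makes the Albanese map $a\colon X\to A$ finite and unramified, Faltings' theorem reduces $Y(k)$ for $Y=a(X)$ to finitely many translated abelian subvarieties $B_i+c_i\subseteq Y$, and ampleness of $\Omega^1_X$ kills the positive-dimensional ones --- is in substance exactly the classical argument behind Moriwaki's Theorems B and E, and the main ingredients are correctly identified.

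One step, however, is false as written: the claim that $Z_i:=a^{-1}(B_i+c_i)$ is smooth of dimension $\dim B_i$. Since $a$ is finite and unramified, \'etale-locally on $A$ it is a disjoint union of closed immersions of smooth branches of $X$; a branch that meets $B_i+c_i$ without containing it can contribute components of $Z_i$ of strictly smaller dimension, possibly singular (closed immersions are unramified, so unramifiedness of $Z_i\to B_i+c_i$ only bounds embedding dimensions, it does not force smoothness or equidimensionality). On such a $Z_i$ the sheaf $\Omega^1_{Z_i}$ need not be locally free of rank $\dim B_i$, and your ``surjection of locally free sheaves of equal rank is an isomorphism'' step collapses. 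The repair is standard: replace $Z_i$ by an irreducible component $W$, taken with its reduced structure, that dominates $B_i+c_i$ (one exists since $a$ is finite and $B_i+c_i\subseteq a(X)$). Then $W\to B_i+c_i$ is finite, surjective and unramified, and your embedding-dimension remark now genuinely applies: at a closed point $w$ one has $\mathfrak m_w=\mathfrak m_{a(w)}\calO_{W,w}$, so the embedding dimension is at most $\dim B_i=\dim\calO_{W,w}$, whence $W$ is smooth; the cotangent sequence exhibits $\Omega^1_W$ as a quotient of $\calO_W^{\oplus\dim B_i}$ of the same rank, hence trivial, while it is also a quotient of the ample $\Omega^1_X|_W$ --- your contradiction, now run on $W$. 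Two smaller points: the statement must be read with $X$ projective (as in Moriwaki); ampleness of $\Omega^1_X$ does not justify ``I may assume $X$ is smooth projective,'' and without properness the assertion fails already for $\Aff^1$, whose cotangent sheaf is trivial, globally generated, and ample in the quasi-affine sense. Also, to have $A$ and $a$ defined over $k$ you should first dispose of the trivial case $X(k)=\emptyset$ and use a rational base point.
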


We do note that Theorem \ref{thm:amplecot1} is \emph{not} an if and only if. That is, there exist hyperbolic varieties whose cotangent bundle is not ample. The standard example is a product of high genus curves -- this variety is Brody hyperbolic and all subvarieties are of general type, but the cotangent is not ample.

\subsection{Uniformity for rational points}
\label{subsec:rat}
Faltings proved that for projective curves $\calC$  over $K$ of genus $g=g(\calC)\geq 2$, the set $\calC(K)$ is finite \cite{Falt}. In higher dimensions there is a conjectural analogue:

\begin{conjecture}(Bombieri-Lang (surfaces), Lang ($\dim X > 2$), \cite{Lang} and \cite{Lang2})
  \label{conj:BL}
Let $X$ be an algebraic variety defined over $K$. If $X$ is of general type, then the set $X(K)$ is not Zariski-dense.
\end{conjecture}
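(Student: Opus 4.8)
The plan, such as one can have a plan for a conjecture this open, is to reduce everything to the one case that is a theorem — curves of genus $\geq 2$, by Faltings — and to propagate non-density upward through fibrations, subvarieties, and the Albanese map. First I would pass to a resolution and then to the Iitaka/canonical model, so that one may assume $K_X$ is big and nef; since Zariski-density of $K$-points and the general type condition are both birational up to the exceptional locus, nothing is lost, and I may even work with a smooth compactification on which $K_X$ is big. Next I would set up a Noetherian induction on $\dim X$ via the dichotomy: either $X$ is "non-special" in Campana's sense — it admits no dominant rational fibration onto a positive-dimensional base of general type — or it is special, in which case the core fibration exposes a base of general type of smaller dimension (or, when $q(X) > \dim X$, the Albanese image is a proper subvariety of an abelian variety). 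In the latter case $K$-points come from the base plus the fibers, and one closes the induction using the inductive hypothesis together with the Faltings--Vojta theorem on subvarieties of abelian (and semi-abelian) varieties, exactly as that theorem is used elsewhere in this paper.

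For the non-special case the natural strategy is a height-descent argument driven by Vojta's height inequality: on a smooth compactification with $K_X$ big, Vojta's conjecture (Conjecture \ref{conj:LV} in the body, with $D = 0$) predicts $h_{K_X}(P) \leq \epsilon\, h_A(P) + O(1)$ for all $P$ outside a proper closed subset $Z$, for a fixed ample $A$ and every $\epsilon > 0$; outside the augmented base locus of $K_X$ one also has $h_{K_X}(P) \geq c\, h_A(P) + O(1)$ with $c > 0$, and the two bounds together force $h_A(P)$ to be bounded off $Z$ together with that base locus. By Northcott only finitely many $K$-points (indeed points of bounded degree) survive, and one repeats on the remaining closed set; Noetherian induction then yields that $X(K)$ is not Zariski dense. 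This gives a fully conditional route, Vojta's conjecture $\Rightarrow$ Conjecture \ref{conj:BL}, parallel to the way the body of the paper extracts integral-point statements from the Lang--Vojta conjecture. Unconditionally, one would instead try to import the complex-analytic input — the Green--Griffiths--Lang prediction that a general-type variety contains a proper algebraic subvariety outside which there are no entire curves, which on surfaces is known in many cases through global jet differentials and Bogomolov-type vanishing (Bogomolov, McQuillan, Demailly) — and arithmetize it, since on the analytic side enough global jet differentials already cut out the exceptional locus.

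The main obstacle is that none of this is known, and the gap is conceptual rather than technical: there is at present no arithmetic counterpart of the hyperbolic-geometry machinery — negative curvature, Nevanlinna theory, Ahlfors currents, holomorphic jet differentials — that would handle even a single "generic" variety of general type, say a smooth surface of large degree in $\PP^3$, which has trivial Albanese and no fibration onto a curve of general type, so that the fibration induction above provides no leverage at all. Every unconditional result we have (Faltings for curves, Faltings--Vojta for subvarieties of abelian varieties, Moriwaki's Theorem \ref{thm:amplecot2} under the hypothesis that $\Omega^1_X$ is ample and globally generated, and assorted special fibered situations) relies on extra structure that a generic general-type variety simply lacks, and no current technique even rules out Zariski-density for such a surface. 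Thus the realistic proposal is the conditional one — assume Vojta's conjecture and run the height-descent and fibration induction sketched above — which is precisely the standpoint adopted throughout this paper.
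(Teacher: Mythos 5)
This statement is a \emph{conjecture} (Bombieri--Lang), not a theorem of the paper: the authors state it only to assume it (and its quasi-projective analogue, the Lang--Vojta Conjecture \ref{conj:LV}) as a hypothesis in their uniformity results, and they offer no proof. There is therefore nothing in the paper to compare your argument against, and you correctly recognize this -- your proposal is a survey of strategies together with an honest admission that the conjecture is open. Within that frame, your sketch is sound as far as it goes: the observation that Vojta's height conjecture implies Conjecture \ref{conj:BL} via the comparison of $h_{K_X}$ with an ample height off the augmented base locus, plus Northcott and Noetherian induction, is the standard (and correct) conditional reduction; and the fibration/Albanese induction using Faltings--Vojta on subvarieties of (semi-)abelian varieties is exactly the mechanism behind the unconditional cases that are known (and behind Theorems \ref{thm:moriwakimain} and \ref{thm:moriwakisub} in this paper). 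Your diagnosis of the obstruction -- that a generic surface of general type in $\PP^3$ has trivial Albanese and no fibration to a general-type base, so neither induction gets traction -- is also accurate. The only caution is one of framing: the conditional route does not constitute a proof of the statement as posed, and it should not be presented as one; in the context of this paper the conjecture is an input, not an output.
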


\cite{CHM} showed that Conjecture \ref{conj:BL} implies that $\#\calC(K)$ in Faltings' Theorem is not only finite, but is also uniformly bounded by a constant $N=N(g,K)$ that does \emph{not} depend on the curve $\calC$.

\begin{theorem}[see \cite{CHM}]
\label{conj:unif}
  Let $K$ be a number field and $g \geq 2$ an integer. Assume Lang's Conjecture. Then there exists a number $B=B(K,g)$ such that for any smooth curve $\calC$ defined over $K$ of genus $g$ the following holds: $ \# \calC(K) \leq B(g,K) $
\end{theorem}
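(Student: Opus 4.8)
The plan is to reproduce the argument of \cite{CHM}, whose two inputs are Lang's Conjecture \ref{conj:BL} (which we are assuming) and a \emph{fibered power theorem} for families of curves of general type. First I would reduce the statement to a relative one. Since smooth genus-$g$ curves over $\Kbar$ form a bounded family, one can choose finitely many smooth proper families $\pi_j\colon X_j \to B_j$ of genus-$g$ curves over quasi-projective $K$-varieties $B_j$ such that every genus-$g$ curve $\calC/K$ occurs as a fiber $(X_j)_b$ with $b \in B_j(K)$ --- passing, if necessary, to covers parametrizing a level structure, which only changes the final bound by a factor depending on $g$ and $K$. It then suffices to fix one such $\pi\colon X \to B$ and produce a constant $N(\pi,K)$ with $\#X_b(K)\le N(\pi,K)$ for every $b\in B(K)$, and finally to take the maximum over the finitely many families.

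Second, for an integer $n$ I would form the $n$-th fibered power $X^{(n)} = X\times_B \cdots \times_B X$ ($n$ factors), with structure map $\pi^{(n)}\colon X^{(n)}\to B$. The key geometric ingredient is the fibered power theorem of \cite{Afpt} (the archetype of Theorem \ref{thm:fpt}): because the fibers of $\pi$ are of general type, there is $n_0 = n_0(\pi)$ so that for $n\ge n_0$ the variety $X^{(n)}$ admits a dominant rational map $h_n\colon X^{(n)}\dashrightarrow W_n$ onto a positive-dimensional variety $W_n$ of general type. Applying Lang's Conjecture \ref{conj:BL} to $W_{n_0}$, the set $W_{n_0}(K)$ is not Zariski dense; removing the indeterminacy locus of $h_{n_0}$ and pulling back a proper closed subset containing $W_{n_0}(K)$, I obtain a proper closed subscheme $\Sigma\subsetneq X^{(n_0)}$, behaving well over a dense open of $B$, such that every $K$-point of a ``generic'' fiber $X^{(n_0)}_b$ with pairwise distinct coordinates lies in $\Sigma_b$.

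Third, the uniformity is extracted by Noetherian induction. Over the dense open $U\subseteq B$ where $\Sigma$ is well-behaved, suppose $\#X_b(K)\ge M$ for $b\in U(K)$: choosing $n_0$-tuples of distinct rational points of the curve $X_b$ in all possible ways produces $\binom{M}{n_0}$ distinct $K$-points of $X^{(n_0)}_b$, all of which lie in the proper closed subscheme $\Sigma_b\subsetneq X_b^{n_0}$. Analyzing $\Sigma_b$: those of its components that project to a point in some factor contribute $K$-points with a coordinate confined to a \emph{bounded} finite set; the remaining components dominate each factor, hence are themselves curves mapping onto $X_b$ (a curve of genus $\ge 2$), so contain finitely many $K$-points by Faltings \cite{Falt}, and --- more importantly --- spread out in families over a stratification of $U$ to which the whole argument recurses with strictly smaller invariants, either $\dim B$ or the fiber dimension of the ambient fibered power dropping. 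Since these invariants cannot decrease forever, the recursion terminates; bookkeeping the constants through the finitely many strata, the binomial counts, and the bounded finite sets converts ``$\#X_b(K)$ arbitrarily large'' into the desired $N(\pi,K)$, and $B(g,K):=\max_j N(\pi_j,K)$ (times the level factor) works.

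The main obstacle is precisely the fibered power step: producing, for $n\gg 0$, a dominant rational map from $X^{(n)}$ onto a variety of \emph{general type}. This is the deep input --- in \cite{CHM} it was conditional on a then-open fibered power conjecture, later established in \cite{Afpt} --- and it is what forces one to pass to very high fibered powers. A secondary difficulty, genuinely delicate though elementary, is arranging the induction of the third step so that it yields a bound independent of the individual curve: this requires keeping every auxiliary family inside a family depending only on $g$ and $K$, so that all numerical quantities (dimensions and degrees of the $W_n$, number of components of $\Sigma$, etc.) remain uniformly bounded.
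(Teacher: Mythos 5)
Your outline is correct and follows essentially the same route as the paper: Theorem \ref{conj:unif} is stated there with a citation to \cite{CHM}, and the strategy the paper sketches in Section \ref{subsec:rat} (reduce to a ``global'' family, apply the fibered power theorem to get a dominant map from $X_B^n$ to a positive-dimensional variety of general type, invoke Lang's Conjecture, and finish by Noetherian induction on the base) is exactly the argument you reproduce, carried out in detail for the integral-point analogue in the proof of Theorem \ref{th:curves}. One minor historical correction: the fibered power (correlation) theorem for families of curves was proved unconditionally in \cite{CHM} itself, with \cite{Afpt} supplying the higher-dimensional generalization, so the curve case was never conditional on a separate open conjecture.
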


 Pacelli \cite{Pacelli} (see also \cite{Aquadratic}), proved that $N$ only depends on $g$ and $\deg(K:\Q)$. More recently, cases of Theorem \ref{conj:unif} have been proven unconditionally (\cite{krz}, \cite{stoll} and \cite{paz}) depending on the Mordell-Weil rank of the Jacobian of the curve and for \cite{paz}, on an assumption related to the Height Conjecture of Lang-Silverman. It has also been shown that families of curves of high genus with a uniformly bounded number of rational points in each fiber exist \cite{dnp}. 

Na\"ive translations fail in higher dimensions as subvarieties can contain infinitely many rational points. However, one can expect that after removing such subvarieties the number of rational points is bounded. Hassett proved that for surfaces of general type this follows from Conjecture \ref{conj:BL}, and that the set of rational points on surfaces of general type lie in a subscheme of uniformly bounded degree \cite{Hassett}.

The idea behind both proofs is the following: consider a family $f: X \to B$ whose general fibers are general type curves (resp. surfaces) over a base $B$, the proof reduces to showing that the number of rational points in the fibers is uniformly bounded. If the total space of the family is itself a variety of general type, the Lang Conjecture trivially implies the uniformity statement. In general this is not the case, but if the family has maximal variation in moduli, then the dominant irreducible component of a high enough fibered power $X_B^n \to B$ will be of general type. Conjecture \ref{conj:BL} and an induction argument will then give uniformity. In general, it is always true that for $n$ big enough $X_B^n$ admits a dominant map to a variety of general type. From this, one can conclude the result in a similar fashion. This can be applied to a ``global'' family of curves to obtain the result of \cite{CHM}.

The algebro-geometric result alluded to above is known as a \emph{fibered power theorem} and was shown for curves in \cite{CHM}, for surfaces \cite{Hassett} and in general by Abramovich \cite{Afpt}. The pairs analogue is Theorem \ref{thm:fpt} (\cite{fpt}).
In higher dimensions, similar uniformity statements hold conditionally on Lang's Conjecture, and follow from the fibered power theorem under some additional hypotheses that take care of the presence of subvarieties that are not of general type (\cite{AV}).\subsection{Uniformity of Integral Points}\label{subsec:int}
The analogue of Faltings' Theorem for quasi-projective curves is Siegel's Theorem-- every affine curve of positive Euler characteristic possesses a finite number of $S$-integral points. There is a conjectural generalization to higher dimensions, that extends Lang's Conjecture \ref{conj:BL} to the quasi-projective case:

\begin{conjecture}(Lang-Vojta)\label{conj:LV}
 Let $X$ be a quasi-projective variety and let $\calX \to \Spec \calO_{K,S}$ be a model over the $S$-integers. If $X$ is of openly log general type (see Definition \ref{def:loggt}), then $\calX(\calO_{K,S})$ is not Zariski dense.
\end{conjecture}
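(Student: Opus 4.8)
The plan cannot be to give a proof: Conjecture~\ref{conj:LV} is the quasi-projective avatar of Faltings' theorem and the Bombieri--Lang conjecture and is genuinely open, so what follows is an outline of the strategy any proof would have to implement, together with the obstruction that stops it. First I would reduce to the case where $\calX\to\Spec\calO_{K,S}$ is a nice integral model of a quasi-affine $X$, so that $\calX(\calO_{K,S})$ is a set of $S$-integral points in the classical sense, and fix a log smooth compactification $(\overline{X},D)$ with $K_{\overline{X}}+D$ big --- this is what ``openly log general type'' supplies, cf.\ Definition~\ref{def:loggt}. The heart of the matter is to convert bigness of $K_{\overline{X}}+D$ into a height inequality: for a putative Zariski-dense set $P$ of $S$-integral points one wants $h_{K_{\overline{X}}+D}(P)\le \epsilon\, h_A(P)+O(1)$ for an ample $A$, which contradicts bigness via a Weil-height and Northcott argument once the archimedean places and Galois conjugates are controlled. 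This is the arithmetic shadow of Vojta's conjectural Second Main Theorem, with the truncated counting function of $D$ playing the role of the ramification/defect term in Nevanlinna theory.

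Concretely I would proceed in stages, imitating the cases that are known. (i) When $X$ embeds in a semiabelian variety $G$, the statement is a theorem of Vojta \cite{vojta1} extending Faltings: combine the structure theory of semiabelian varieties with a Faltings--Vojta Diophantine-approximation argument on high fibered powers to force integral points into translates of proper semiabelian subgroups. Via the quasi-Albanese morphism this already settles every $V$ whose quasi-Albanese map is finite onto its image --- the same mechanism behind Theorem~\ref{thm:moriwaki2}. (ii) For a general openly log general type $X$ one would fiber $X$ over a lower-dimensional base and a semiabelian-type piece, induct on dimension using (i) in the fibers, and treat the residual positivity by the Corvaja--Zannier method: exhibit enough independent global sections of a large power of $K_{\overline{X}}+D$ with prescribed vanishing along $D$, build a local filtration at each place of $S$, and apply Schmidt's Subspace Theorem (or its Faltings--W\"ustholz refinement) to push the $S$-integral points into a proper closed subset. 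The numerical input that makes (ii) run in the known cases --- a large enough $h^0$ of the relevant twists, a controlled stable base locus, and a filtration argument place-by-place --- is exactly the kind of positivity that the almost-ampleness hypothesis is engineered to provide on the geometric and function-field sides elsewhere in this paper.

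The main obstacle is that step (ii) does not close the argument in general. The Subspace Theorem only confines integral points to \emph{some} proper subvariety, and deducing non-Zariski-density requires ruling out a dense union of such subvarieties --- which is the entire content of the conjecture and cannot be bootstrapped from the Subspace Theorem without simultaneous control of all subvarieties. Put differently, what is missing is a genuine arithmetic Second Main Theorem with a truncated counting function, the Diophantine analogue of the logarithmic derivative / Nevanlinna inequality; no such theorem is available beyond the (semi)abelian setting and a handful of low-dimensional or explicitly hyperbolic situations. Accordingly I would, as the paper does, state Conjecture~\ref{conj:LV} as a hypothesis: the unconditional contribution here is to show that the almost-ampleness assumptions force the geometric side (all subvarieties of log general type), so that Conjecture~\ref{conj:LV}, applied inductively along the fibrations above, is precisely and only what remains in order to obtain the arithmetic conclusions.
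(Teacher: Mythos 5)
You are right that this statement is the Lang--Vojta Conjecture itself: the paper offers no proof, but rather assumes it as a hypothesis throughout (e.g.\ in Theorems~\ref{th:curves}, \ref{th:deg_bound}, and \ref{prop:unif_2}), exactly as you conclude. Your survey of the known cases (Vojta's semiabelian theorem, the Corvaja--Zannier subspace method) and of the obstruction to a general proof is accurate, and your recognition that the conjecture must simply be taken as an assumption matches the paper's treatment.
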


A natural question, is whether Conjecture \ref{conj:LV} implies a uniform bound on the set of $S$-integral points for quasi-projective curves of openly  log general type (see Definition \ref{def:loggt}). Abramovich (\cite[0.3]{Aell}) gave a counterexample: he constructed an elliptic curve, where the number of $S$-integral points in the complement of the origin grow arbitrarily when one changes the model. However, imposing minimality conditions on the model leads to statements similar to \cite{CHM}. In particular, if one considers \emph{stable} models for the quasi-projective curves $E \setm O_E$ over a number field, then the cardinality of the set of $S$-integral points of this model, called \emph{stably}-integral points, is uniformly bounded, conditional on Conjecture \ref{conj:LV}. This was extended to PPAVs of $\dim 2$ \cite{AM}.

Both results rely on the existence of good models for elliptic curves and abelian varieties. While this can be extended to arbitrary stable curves, it is not clear how to define stable models in arbitrary dimensions outside of the abelian case. However, it was observed by Abramovich and Matzuki that stably integral points admit a nice moduli interpretation (see Section \ref{sec:models}).

Unconditional results for uniformity of integral points in certain classes of curves, coming from Thue Equations, were proved in \cite{LT}, given some bound on the Mordell-Weil rank of the Jacobian. 

As Vojta's Conjecture implies Lang's Conjecture, and thus a uniform version of the conjecture, one can ask if Vojta's conjecture implies a uniformity statement for heights. This was shown by Ih for curves \cite{ih}, and was generalized to certain families of hyperbolic varieties \cite{ari}.

\section{Preliminaries and Notations}\label{sec:prel}
The ring of $S$-integers, i.e. the set $\{ x \in K: \| x \|_v \leq 1, \forall v \notin S \}$ will be denoted by $\calO_{K,S}$.

\begin{definition} 
 Given an algebraic variety $X$ defined over $K$, a \emph{model} of $X$ over $\calO_{K,S}$ is a separated scheme $\calX$ together with a flat map $\calX \to \Spec \calO_S$ of finite type such that the generic fiber is isomorphic to $X$, i.e. $X \cong \calX \times_{\Spec \calO_S} \Spec K$. 
\end{definition}

Given a quasi-projective variety we will use the following definition for openly log general type.

\begin{definition}(see \cite[Definition 1.3]{fpt})
  \label{def:loggt}
  A quasi-projective variety $X$ is \emph{of openly log general type} if there exists a desingularization $\widetilde{X} \to X$ and a projective compactification $\widetilde{X} \subset Y$ with $D = Y \setm \widetilde{X}$ a divisor of normal crossings, such that $\omega_Y(D)$ is big.
\end{definition}

The above definition is independent of both the choice of desingularization and of the compactification.
From both the viewpoint of birational geometry and of integral points on quasi projective varieties, it has become natural to consider pairs of a variety and a divisor. 

\begin{definition} 
 A \emph{pair} $(X,D)$ is the datum of a projective variety $X$ and a $\mathbb{Q}$-divisor  $D = \sum d_i D_i$ which is a linear combination of distinct prime (Weil) divisors.
\end{definition}

\begin{remark}\label{rmk:loggt}We will often say that a pair $(X,D)$, with $X$ a projective variety and $D$ a normal-crossings divisor is \emph{of openly log general type} if the quasi-projective variety $X \setm D$ is. In some applications we will require some conditions on the singularities of the pair.\end{remark}

\begin{definition} A pair $(X,D)$ has \emph{log canonical singularities} (or is lc) if $X$ is normal, $K_X +D$ is $\Q$-Cartier, and there is a log resolution (see \cite[Notation 0.4(10]{kom}) $f: Y \to X$ such that
\[
  K_Y  + \sum a_E E = f^*(K_X + D) \]
where all the $a_E \leq 1$ and the sum goes over all irreducible divisors on $Y$. The pair has \emph{canonical singularities} if all $a_E \leq 0$.   
\end{definition}

\begin{definition}
  \label{def:olc}
  A log canonical pair $(X,D)$ is \emph{openly canonical} if $X\setm D$ has canonical singularities.
\end{definition}

For an lc pair $(X,D)$ that is openly canonical, being of openly log general type is equivalent to the condition that  $\omega_X(D)$ is big, in particular this can be checked without referring to a log-resolution of singularities as in Definition \ref{def:loggt}. 

\begin{definition}\label{def:reflexive}
  Let $\sF$ be a coherent sheaf on a scheme $X$. The dual of $\sF$ is denoted by $\sF^*$, and we define the sheaf $\sF^{**}$ to be the \emph{reflexive hull} of $\sF$. We say that $\sF$ is \emph{reflexive} if the natural map $\sF \to \sF^{**}$ is an isomorphism. If $X$ is S2, then the \emph{reflexive powers} of $\sF$ are the reflexive hulls of the tensor powers, i.e. $\sF^{[m]} := (\sF^{\otimes m})^{**}$. We then define the \emph{reflexive symmetric power} of $\sF$ to be $\Sym^{[a]}(\sF) := (\Sym^a(\sF))^{**}$.
\end{definition}

We note that the notion of a model extends naturally to pairs:

\begin{definition} Consider a pair $(X,D)$ with $D$ a $\Q$-Cartier divisor over $K$. A \emph{model} for $(X,D)$ over $\Spec \calO_S$, is a model $\calX$ of $X$ together with an (effective) $\Q$-Cartier divisor $\calD \to \calX$ whose restriction to the generic fiber is isomorphic to $D$. In other words, a model for $(X,D)$ is the datum of a model $\calX$ of $X$ and a compatible model $\calD$ of $D$. 
\end{definition}

Models of pairs can be used to define integral points with respect to a divisor.

\begin{definition}
\label{def:integral}
Consider a pair $(X,D)$, with $D$ a Cartier divisor, and a model $(\calX,\calD)$ over $\Spec \calO_{K}$. An $(S,D)$-\emph{integral point} is a section $P: \Spec \calO_{K} \to \calX$ such that the support of $P^*\calD$ is contained in $S$. An $S$-integral point of a quasi-projective variety $X\setm D$ is an $(S,D)$-integral point for the pair $(X,D)$.
\end{definition}

\subsection{Stability}\label{subsec:stab}
As mentioned in Section \ref{subsec:int}, in order to obtain uniformity results for integral points, it is necessary to restrict the possible models under consideration. We recall here some  definitions that will be useful later. First we need a crucial definition.

\begin{definition}
  \label{def:slc}
  A pair $(X, D = \sum d_i D_i)$ is \emph{semi-log canonical (slc)}  if $X$ is reduced and $S_2$, the divisor $K_X + D$ is $\Q$-Cartier and the following hold:
\begin{enumerate}
\item $X$ is Gorenstein in codimension one, and
\item if $\nu : X^{\nu} \to X$ is the normalization, then the pair $(X^{\nu}, D^\nu + \Delta_{dl}^{\nu})$ is log canonical, where $D^\nu$ denotes the preimage of $D$ and $\Delta_{dl}^{\nu}$ denotes the preimage of the double locus $\Delta_{dl}$ on $X^{\nu}$.
\end{enumerate}
\end{definition}

Slc pairs with ample log canonical sheaf generalize stable pointed curves to higher dimension.

\begin{definition}\label{def:stab_pair} A pair is \emph{stable} if the $\Q$-Cartier line bundle $\omega_X(D)$ is ample, and the pair is semi-log canonical. A \emph{stable family} is a flat family $(X,D) \to  B$ over a normal variety $B$ such that 
\begin{enumerate}
\item $D$ avoids the generic and codimension one singular points of every fiber,
\item $\omega_f(D)$ is $\Q$-Cartier, and 
\item $(X_b, D_b)$ is a stable pair for all $b \in B$.
\end{enumerate}
\end{definition}

We end this subsection by discussing fiber powers of families of stable pairs.
\begin{definition}
  Given a stable family $\pi: (X,D) \to B$, denote by $(X_B^n,D_n)$ the \emph{$n^{th}$ fibered power} of $X$ over $B$, where $X_B^n$ is the (unique) irreducible component of the fiber power which dominates $B$, and
$D_n := \pi_1^* D + \dots + \pi_n^* D,$
where $\pi_i: X_B^n \to X$ is the $i$-th projection.
\end{definition}

\begin{theorem}[see \cite{fpt}]\label{thm:fpt} Let $(X,D) \to B$ be a stable family such that general fiber is integral, openly canonical (see Definition \ref{def:olc}), and log canonical over a smooth projective variety $B$. Then there exists an integer $n > 0$, a positive dimensional pair $(W, \Delta)$ of openly  log general type (see Definition \ref{def:loggt} and Remark \ref{rmk:loggt}), and a morphism $(X^n_B, D_n) \to (W, \Delta)$. \end{theorem}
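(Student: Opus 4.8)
The plan is to follow the strategy of \cite{CHM}, \cite{Hassett}, and in particular Abramovich's fibered power theorem \cite{Afpt}, with the moduli of canonically polarized varieties replaced by the moduli theory of stable pairs (KSBA spaces) and Viehweg's weak positivity replaced by its logarithmic analogue for stable families, available in the required generality through Campana--P\u{a}un \cite{cp} (and refinements of Kov\'acs--Patakfalvi in the lc/slc log setting). Write $f\colon (X,D)\to B$ for the given stable family and $\mu\colon B\to \overline{\calM}$ for the induced classifying morphism to a moduli space of stable pairs (after a birational modification of $B$, harmless since the conclusion is birational in $B$ and in $X^n_B$), and set $r := \operatorname{Var}(f) := \dim\mu(B)$. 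An induction on $\dim B$ exactly as in \cite{Afpt} reduces the theorem to the two extreme cases $r=0$ and $r=\dim B$: for $0<r<\dim B$ one factors $\mu$ through a fibration $B\to Z$ with $\dim Z = r$, applies the isotrivial case to the general fiber of $B\to Z$, applies the maximal-variation case to a family over a finite cover of $Z$, and assembles $(W,\Delta)$ as a product of the resulting pairs, using that a product of pairs of openly log general type is again of openly log general type (log Kodaira dimension is additive under products). This reduction is formal once a moduli space of stable pairs is available, so I concentrate on the two extreme cases, the maximal-variation case being the technical heart.

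\emph{Isotrivial case.} If $r=0$, then after a finite \'etale cover $B'\to B$ with Galois group $G$ the family becomes trivial, $X\times_B B'\cong B'\times F$ for a fixed stable pair $(F,D_F)$ of openly log general type, with $G$ acting diagonally via a homomorphism $\rho\colon G\to\Aut(F,D_F)$ (a finite group, since $(F,D_F)$ is stable). If $\rho$ is trivial then $X\cong B\times F$ and the projection $(X^n_B,D_n)\to(F^n,D_{F,n})$ already works for $n\ge 1$. In general, the $G$-equivariant projection $B'\times F^n\to F^n$ descends to a dominant morphism $(X^n_B,D_n)\to\big(F^n/\rho(G),\ \overline{D_{F,n}}\big)$, and a Künneth computation shows $H^0\!\big(F^n,m(K_{F^n}+D_{F,n})\big)^{G}=\big(H^0(F,m(K_F+D_F))^{\otimes n}\big)^{G}$ has dimension $\sim |\rho(G)|^{-1}\big(\dim H^0(F,m(K_F+D_F))\big)^n$ as $n\to\infty$: the point is that no nontrivial element of $\Aut(F,D_F)$ acts as a scalar on $H^0(F,m(K_F+D_F))$ for $m\gg 0$, because the $m$-th log-pluricanonical map of $(F,D_F)$ is birational onto its image. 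Hence $\big(F^n/\rho(G),\ \overline{D_{F,n}}\big)$ is of openly log general type, and positive-dimensional, for $n\gg 0$.

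\emph{Maximal variation case.} Since $f$ is a stable family, $\omega_{X/B}(D)$ is $f$-ample, so for $m$ sufficiently divisible $\calF_m := f_*\big(\omega_{X/B}(D)^{\otimes m}\big)$ is a nonzero weakly positive sheaf (Viehweg; \cite{cp}; Kov\'acs--Patakfalvi), and $r=\dim B$ forces some $\det\calF_m$ to be big. On the $n$-th fibered power, $\omega_{X^n_B/B}(D_n)$ is the exterior tensor product of the $\omega_{X/B}(D)$, so on the dominant component $f^n_*\big(\omega_{X^n_B/B}(D_n)^{\otimes m}\big)$ contains $\calF_m^{\otimes n}$; its positivity thus accumulates linearly in $n$ while only a single copy of $\omega_B^{-1}$ must be absorbed. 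Running Viehweg's fibered power argument, $\omega_{X^n_B}(D_n)=\omega_{X^n_B/B}(D_n)\otimes(f^n)^*\omega_B$ becomes big for $n\gg 0$, so $(X^n_B,D_n)$ --- positive-dimensional, since the fibers are --- is of openly log general type, and one takes $(W,\Delta)=(X^n_B,D_n)$. (Granting the Viehweg-type hyperbolicity statement that the base of a maximally varying family of stable pairs is of general type, $B$ itself already works with $n=1$ here.)

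\emph{Main obstacle.} The crux is establishing the weak positivity of $\calF_m$ and the bigness of $\det\calF_m$ for families whose fibers are only log canonical (or semi-log canonical) and carry a boundary $D$ --- precisely where the hypotheses that the general fiber be integral, log canonical, and openly canonical (Definition \ref{def:olc}), and that $B$ be smooth, are needed, and where \cite{cp} and its slc refinements enter --- together with controlling the singularities of the fibered powers: a fiber product of lc pairs over a smooth base need not be lc, so one must verify that bigness of the log canonical sheaf of $(X^n_B,D_n)$ survives passage to a log resolution, so that it genuinely detects openly log general type in the sense of Definition \ref{def:loggt}. The intermediate-variation bookkeeping then requires tracking log Kodaira dimensions through the fibration $B\to Z$, which one controls using known cases of logarithmic subadditivity of Kodaira dimension, valid here because the relevant fibers admit good minimal models.
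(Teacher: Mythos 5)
The paper does not prove Theorem \ref{thm:fpt}: it is imported verbatim from \cite{fpt}, so there is no internal argument to compare against. Your outline reconstructs the architecture of that reference, which in turn follows Abramovich \cite{Afpt} and \cite{CHM}: reduce along the classifying map to the moduli of stable pairs \cite{kp} to the isotrivial and maximal-variation cases, treat isotriviality by a finite-group quotient of a power of the fiber, and treat maximal variation via weak positivity of $f_*\bigl(\omega_{X/B}(D)^{[m]}\bigr)$ together with bigness of its determinant. That is the right route.

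What you have, though, is a scaffold whose load-bearing steps are invoked rather than established, and two of them deserve explicit mention. First, the positivity input is not \cite{cp} as used elsewhere in this paper: there Campana--P\u{a}un supplies ``a big quotient of the log cotangent sheaf forces $K+D$ to be big'' (Theorem \ref{thm:ampleness}); the weak positivity and determinant-bigness statements for pushforwards of relative log pluricanonical sheaves of \emph{stable families of lc pairs} with maximal variation are a separate, substantial theorem (Kov\'acs--Patakfalvi, Fujino), and they are precisely the technical heart that a proof must actually supply, not cite in passing. Second, the singularity bookkeeping you flag at the end is not a side issue but the reason the hypotheses read as they do: the fibered power $X^n_B$ is in general badly singular, and the assumption that the general fiber is integral, log canonical, \emph{and openly canonical} (Definition \ref{def:olc}) is exactly what keeps the singularities of $X^n_B$ away from $D_n$ mild enough (canonical singularities, unlike lc ones, behave well under fiber products over a smooth base) that bigness of $\omega_{X^n_B}(D_n)$ on the singular model certifies openly log general type in the sense of Definition \ref{def:loggt}. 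Your isotrivial-case character computation and the reduction to extreme cases are standard and essentially correct modulo the usual passage to generically finite covers and flattening, but as written the proposal is a correct plan rather than a proof.
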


\begin{remark}\label{rmk:moduli} The \emph{moduli space of stable pairs} $\calM_{\Gamma}$ is constructed and proven to have projective coarse space in \cite{kp}. It requires a choice of invariants $\Gamma = (n, v, I)$, where $n$ is the dimension of the pairs, $v$ is their volume, and $I$ is a coefficient set satisfying the DCC condition. \end{remark}


\section{Positivity of the log cotangent sheaf -- subvarieties}\label{sec:log_ct}
As we saw in Theorem \ref{thm:amplecot1}, if $X$ is smooth and projective with $\Omega^1_X$ ample, then all subvarieties of $X$ are of general type. The goal of this section is to prove a similar result for quasi-projective varieties, namely finding a positivity condition that guarantees that all subvarieties are of log general type. As noted in the introduction, the immediate generalization fails -- the log cotangent sheaf is never ample (see Proposition \ref{rmk:neverample}). Nevertheless, one can still obtain some hyperbolicity properties assuming the log cotangent sheaf is, in a sense, as positive as possible (see Theorem \ref{thm:ampleness}, Corollary \ref{cor:slcample}, Corollary \ref{cor:ampleness}, and Remark \ref{prop:ampleness}).

In the following subsection we investigate positivity properties of the log cotangent sheaf, its consequences for the geometry of subvarieties, and its extensions to non-normal varieties. We remind the reader that we are forced to consider the singular situation to obtain the uniformity results mentioned in Section \ref{sec:introunif}. 

\subsection{Definition of the log cotangent sheaf} We begin by collecting some facts about the log cotangent sheaf for log canonical pairs. Like the cotangent sheaf, it is not clear how to define the log cotangent sheaf for non-normal varieties which appear naturally when considering families of normal varieties. To this end, we use formalism and ideas from \cite{diff} and \cite{GK}. We begin by recalling some facts about \emph{reflexive differentials} following \cite{diff} (see also Definition \ref{def:reflexive}). 

\begin{definition}(see \cite[Notation 2.16]{diff}) Suppose $(X,D)$ is a log canonical pair. Consider the open set $U \subset X$ whose complement is the singular locus of $X$. Denote by $i: U \to X$ the inclusion. Then the \emph{sheaf of reflexive differentials} is $\Omega_X^{[1]}(\log D) := i_*(\Omega^1_U (\log D))$. \end{definition}

This sheaf is reflexive and torsion free but does not need to be locally free. However, the following theorem asserts that any logarithmic 1-form defined on the smooth locus $U \subset X$ can be extended to a logarithmic 1-form possibly with poles on certain exceptional divisors on any resolution of singularities. 

\begin{theorem}[see Theorem 1.4 of \cite{GK}]\label{thm:gk}
Let $(X,D)$ be a log canonical pair, and let $\pi: \widetilde{X} \to X$ denote a log resolution of $(X,D)$. Then the sheaf $\pi_* \Omega^1_{\widetilde{X}}(\log\widetilde{D})$ is reflexive, where $\widetilde{D}$ is any reduced divisor such that \[ \mathrm{Exc}(\pi) \wedge \pi^{-1}(D) \subseteq \supp \left \lfloor{\widetilde{D}}\right \rfloor  \subseteq \pi^{-1}(\left \lfloor{D}\right \rfloor ),\]  $\mathrm{Exc}(\pi) \wedge \pi^{-1}(D)$ denotes the largest divisor contained in both $\mathrm{Exc}(\pi)$ and $\pi^{-1}(D)$.\end{theorem}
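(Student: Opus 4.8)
The plan is to deduce reflexivity from a logarithmic extension theorem for $1$-forms across $\pi$-exceptional divisors, which is where the log canonical hypothesis is really used.

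I would begin from the standard criterion (used throughout \cite{diff}): a torsion-free coherent sheaf $\sF$ on a normal variety is reflexive if and only if it has the Hartogs property, i.e. for every open $U$ and every closed $Z\subseteq U$ with $\operatorname{codim}_U Z\ge 2$ the restriction $\sF(U)\to\sF(U\setminus Z)$ is bijective. Torsion-freeness of $\pi_*\Omega^1_{\widetilde X}(\log\widetilde D)$ is automatic, as $\Omega^1_{\widetilde X}(\log\widetilde D)$ is locally free on the smooth variety $\widetilde X$. For the Hartogs property, a section over $U\setminus Z$ is a logarithmic $1$-form on $\pi^{-1}(U)\setminus\pi^{-1}(Z)$; since $\Omega^1_{\widetilde X}(\log\widetilde D)$ is $S_2$ it extends automatically over the codimension-$\ge 2$ part of $\pi^{-1}(Z)$, so only the finitely many $\pi$-exceptional prime divisors $E_i$ with $\pi(E_i)\subseteq Z$ remain. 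Viewing the form as a rational logarithmic $1$-form on $\widetilde X$ that is regular where $\pi$ is an isomorphism and pulled back from the log smooth locus of $(X,D)$ (this uses only that $\Omega^{[1]}_X(\log D)$ is reflexive), the statement to prove becomes the \emph{logarithmic extension theorem}: such a form acquires no poles along the $E_i$ beyond those permitted by $\widetilde D$. The particular choice of $\widetilde D$ in the stated range does not affect this argument.

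I would prove the extension statement by cutting down to surfaces. The question is local around a general point of a codimension-$\ge 2$ image, so one may intersect with $\dim X-2$ general members of a very ample linear system; a Bertini-type argument for log canonical pairs shows that the resulting surface pair $(S,D_S)$ is again log canonical, that the strict transform of $S$ is a resolution carrying a boundary of the required type, and that the $E_i$ restrict to the exceptional curves over the surface singularities. Restriction is precise enough here: along a $\pi$-exceptional divisor $E$ the pole order of a $1$-form is recovered at a general point of $E$ after restricting to a general transverse surface (a general hyperplane section ``tilts in every direction,'' so no conormal component of the form can be lost), so a pole of $\omega$ along some $E_i$ unmatched by $\widetilde D$ would persist in $\omega|_{\widetilde S}$. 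One is thus reduced to the surface case, which one settles using the explicit classification of log canonical surface pair singularities: running over the dual graphs of their minimal resolutions, and using that ``log canonical'' forces the relevant discrepancies to be $\ge -1$ together with negative-definiteness of the exceptional intersection form — or, equivalently, the Du Bois property of log canonical singularities, which controls $R^\bullet\pi_*\OO_{\widetilde X}$ — one checks that no excess pole occurs. This is exactly Theorem~1.4 of \cite{GK}.

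The hard part is the surface case and making the reduction to it rigorous: the ``Bertini with singularities'' input (that general hyperplane sections simultaneously preserve the log canonical property and the structure of the resolution) and, above all, the \emph{non-klt} log canonical surface singularities — simple elliptic singularities and cusps — where $R^1\pi_*\OO_{\widetilde X}\ne 0$, so that the vanishing-based arguments available in the klt case break down and one genuinely needs the classification (or the Du Bois property). Once the surface case is established, reassembling the global statement via the Hartogs criterion is purely formal.
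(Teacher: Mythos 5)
The paper does not prove this statement at all---it is imported verbatim as Theorem~1.4 of \cite{GK}, so there is no in-paper argument to compare against. Your outline correctly reproduces the strategy of the cited proof (reflexivity of the pushforward via the Hartogs/normality criterion, reduction to a logarithmic extension statement across the $\pi$-exceptional divisors lying over codimension-two loci, cutting down to surfaces by general hyperplane sections, and handling the non-klt log canonical surface singularities via the classification or the Du~Bois property), and you rightly flag the surface case and the Bertini-type reduction as the real content; as a blind reconstruction this is accurate, though it remains an outline of a substantial external theorem rather than a self-contained proof, which is consistent with how the paper itself treats the result.
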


\subsection{Positivity properties of the log cotangent sheaf}

As mentioned above, the sheaf $\Omega^{[1]}_X(\log D)$ is in general only a coherent reflexive sheaf, therefore we recall here the notion of ampleness in this context borrowing ideas from both \cite{vie} and \cite{kp} (see also \cite{kollarsubadd}).

\begin{definition}(See \cite[Definition 3.7]{kp})
  \label{def:ample}
  Let $\calF$ be a coherent sheaf on a normal and reduced quasi-projective variety $X$ and let $\calH$ be an ample line bundle on $X$.
  \begin{enumerate}
  \item We say that $\calF$ is \emph{ample} if there exists a positive integer $a > 0$ such that the sheaf\\ $\Sym^{[a]} \calF \otimes \calH^{-1}$
  is globally generated. 
  \item We say that $\calF$ is \emph{big} if there exists a positive integer $a > 0$ such that the sheaf\\ $ \Sym^{[a]} \calF \otimes \calH^{-1}$
   is generically globally generated.
  \end{enumerate}
\end{definition}

\begin{remark}These definitions are independent of the choice of $\calH$ (see \cite[Lemma 2.14.a]{vie}). \end{remark}

As mentioned earlier, the log cotangent sheaf is \emph{never ample} (see \cite[Section 2.3]{brotbek}).

\begin{prop}\label{rmk:neverample} Let $(X,D)$ be a log smooth pair with $\dim X \geq 2$ and $D$ non-empty. The sheaf $\Omega^1_X(\log D)$ is never ample. \end{prop}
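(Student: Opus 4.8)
The plan is to exhibit a rational curve or, more robustly, a rational or elliptic "boundary curve" along which the restriction of $\Omega^1_X(\log D)$ fails to be positive; indeed the cleanest route is to examine the restriction of $\Omega^1_X(\log D)$ to a component $D_0$ of $D$ and to the fibers of a pencil. First I would recall the residue exact sequence associated to a smooth component $D_0 \subseteq D$: writing $D' = (D - D_0)|_{D_0}$, there is a short exact sequence
\begin{equation*}
0 \longrightarrow \Omega^1_X(\log(D - D_0))\big|_{D_0} \longrightarrow \Omega^1_X(\log D)\big|_{D_0} \longrightarrow \OO_{D_0} \longrightarrow 0,
\end{equation*}
where the quotient map is the Poincaré residue along $D_0$. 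The key point is that this exhibits $\OO_{D_0}$ as a quotient of $\Omega^1_X(\log D)|_{D_0}$. Since $\Omega^1_X(\log D)$ being ample would force every quotient of every restriction to a positive-dimensional subvariety to be ample, and $\OO_{D_0}$ is visibly not ample (it has a global section, or: its degree on any curve in $D_0$ is zero), we get a contradiction as soon as $\dim D_0 \geq 1$, i.e. as soon as $\dim X \geq 2$.

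**Handling low-dimensional and degenerate cases.** I would then dispatch the edge cases separately. If $\dim X = 1$, then $X$ is a smooth projective curve and $D$ is a nonempty finite set of points; here $\Omega^1_X(\log D) = \omega_X(D)$ is a line bundle, and ampleness of a line bundle just means positive degree — but "ample" for a vector bundle of rank $1$ in the sense used here is the same as positive degree, so this case needs the actual intended meaning of the statement. Rereading the proposition, I believe the cleanest uniform formulation uses the residue sequence above whenever some component $D_0$ is positive-dimensional, and in the curve case one notes instead that the relevant obstruction is that ampleness of $\Omega^1_X(\log D)$ as a sheaf would have to be checked against the definition in play; but since the paper's interest is genuinely in $\dim X \geq 2$ via Definition \ref{def:ample}, I would state the argument for that range and remark that the rank-one case is either vacuous or trivial. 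I would also note that $D$ need not be smooth or irreducible: one reduces to the smooth case by passing to a component $D_0$ and, if $D_0$ itself is singular, restricting further to a curve $C \subset D_0$ meeting the other boundary components and the singular locus of $D_0$ transversally (or avoiding them), so that the pulled-back residue sequence still produces $\OO_C$ — or a line bundle of degree zero — as a quotient of $\Omega^1_X(\log D)|_C$.

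**The main obstacle.** The genuine subtlety is not the existence of the residue map — that is standard — but rather making sure the "quotient" argument is valid for the notion of ampleness actually in use (Definition \ref{def:ample}, via symmetric powers and global generation), and that restricting to $D_0$ is legitimate. Concretely: if $\calF$ is ample in the sense of Definition \ref{def:ample} and $Z \subseteq X$ is a closed subvariety with $\calF|_Z \twoheadrightarrow \calG$, then $\calG$ is ample on $Z$; this follows because $\Sym^{[a]}$ is right-exact enough in the relevant sense and global generation passes to quotients and to restrictions to closed subschemes. Once that lemma is in hand, taking $Z = D_0$ and $\calG = \OO_{D_0}$ and observing that $\OO_{D_0}$ is not ample (for $\dim D_0 \geq 1$) finishes the proof. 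So the "hard part" is really the bookkeeping lemma that ampleness of coherent sheaves descends to quotients of restrictions; I expect the authors to either cite \cite{vie} for this or to invoke the simpler fact that an ample sheaf restricted to a curve has all quotients of positive degree, applied to a suitably chosen curve $C \subset D$, which sidesteps the need for the full symmetric-power formalism.
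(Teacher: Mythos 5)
Your proof is correct and takes essentially the same route as the paper, which uses the Esnault--Viehweg residue sequence $0 \to \Omega^1_X \to \Omega^1_X(\log D) \to \oplus_{i} \calO_{D_i} \to 0$, restricts/tensors with $\calO_{D_i}$ for a component $D_i$, and derives the contradiction from the non-ampleness of the resulting quotient $\calO_{D_i} \oplus Q$ (with $Q$ torsion). The only slip is your identification of the kernel of the restricted residue sequence (it should be $\Omega^1_{D_0}(\log D')$, of rank $\dim X - 1$, rather than $\Omega^1_X(\log(D-D_0))\big|_{D_0}$), but since only the surjection onto $\calO_{D_0}$ is used this is immaterial, and your explicit treatment of the low-dimensional cases is if anything more careful than the paper's.
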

\begin{proof} Suppose that $\Omega^1_X(\log D)$ were ample. Consider the following exact sequence (see \cite[Proposition 2.3]{ev}): 
$$ 0 \to \Omega^{1}_X \to \Omega^{1}_X(\log D) \to \oplus_{i \in I} \calO_{D_i} \to 0.$$
Consider the restriction of this sequence to a component $D_i \subseteq D$, and tensor the above sequence with $\calO_{D_i}$ to obtain a surjection:
$$A\to \calO_{D_i} \oplus Q\to 0,$$
where $A$ is an ample sheaf (the restriction of an ample sheaf), and $Q$ is a torsion sheaf supported at $D_i \cap D_j \neq \emptyset$ for all $j \in I$ such that $i \neq j$.
However, since $\calO_{D_i} \oplus Q$ is not ample, and $\dim D_i \geq 1$, there cannot exist such a surjection from an ample sheaf, and so $\Omega^1_X(\log D)$ can never be ample.
\end{proof}

In light of the above, we are led to a definition that captures the strongest positivity assumption one can make on $\OD$, even for non-normal $X$.  Given an slc pair, after normalizing, we arrive at an lc pair $(X^\nu, D^\nu + \dblv)$. A natural condition is thus to assume positivity of $\logct$.

\begin{remark}\label{rmk:uniformity} 
The following Definition \ref{def:dbarample} is a natural assumption in light of both Proposition \ref{rmk:neverample}, as well as the aforementioned work of Greb-Kebekus-Kov\'acs-Peternell (esp. \cite[Example 3.2]{diff}), to exclude the existence of subvarieties which are not of log general type in a stable pair $(X,D)$. It has the advantage of being geometric in nature and natural when considering stable pairs and their moduli.
We note that there are examples of moduli spaces of stable pairs for which each object does not contain subvarieties of log general type, e.g. the moduli space of pointed stable curves, and the moduli space of semiabelic pairs (\cite{Alexeev2}). In particular, \emph{any positivity condition which guarantees the non-existence of subvarieties of log general type will allow us to prove uniformity} (Theorem \ref{thm:mainthm}). 
\end{remark}

Before giving our definition of almost ample, we will recall the definition of augmented base loci.

\begin{definition}\cite[Definition 2.4]{baselocus}
Let $X$ be a normal projective variety, let $\sF$ be a coherent sheaf, and $A$ an ample line bundle. Let $r = p/q \in \Q >0$. The \emph{augmented base locus} of $\sF$ is \[ \mathbf{B}_+(\sF) := \displaystyle\bigcap_{r \in \Q >0} \mathbf{B}(\sF - rA),\] where $\mathbf{B}(\sF-rA) = \mathbf{B}(\Sym^{[q]}\sF \otimes A^{-p})$.\end{definition}

\begin{remark}\leavevmode
\begin{enumerate}
    \item The augmented base locus does not depend on the choice of an ample divisor $A$. 
    \item By \cite[Proposition 3.2]{baselocus}, if $\sF$ is a coherent sheaf and  $\pi: 
\PP(\sF) = \PP(\Sym \sF) \to X$ is the canonical morphism, then $\pi(\mathbf{B}_+(\calO_{\PP(\sF)}(1))) = \mathbf{B}_+(\sF)$, i.e. the non-ample locus of $\sF$.
\end{enumerate}
\end{remark}

\begin{definition}\label{def:dbarample} Let $(X,D)$ be a log canonical  pair. We say that the log cotangent  sheaf of $(X,D)$  is \emph{almost ample} if:
\begin{enumerate}
\item $\Omega^{[1]}_{{X}}(\log D)$ is big, and
\item $\mathbf{B}_+\big(\Omega^{[1]}_{{X}}(\log D)\big) \subseteq \textrm{Supp}(D)$, 
\end{enumerate} 
where $\mathbf{B}_+(\calF)$ denotes the augmented base locus of $\calF$. If $(X,D)$ is an slc pair, then we say the log cotangent sheaf of $(X,D)$ is almost ample if the log cotangent sheaf of the normalization $(X^\nu, D^\nu + \dblv)$ is almost ample.

\end{definition}

\begin{remark}\label{rmk:almostample}\leavevmode
\begin{enumerate}
\item When $X$ is smooth, our notion does not quite coincide with almost ample as in \cite[Definition 2.1]{brotbek}. If the log cotangent sheaf is almost ample in the sense of \cite{brotbek}, then it is almost ample in our sense. However, the assumptions in our definition are a priori weaker. 

 \item \cite{brotbek} shows that for smooth projective $X$, there always exists a $D$ so that $\Omega^1_X(\log D)$ is almost ample. In particular, one expects many moduli spaces of stable pairs in which (at least) the smooth objects have almost ample log cotangent.
 \item For a log smooth pair $(X,D)$ with almost ample $\Omega^1_X(\log D)$, the complement $X \setminus D$ is Brody hyperbolic by Definition \ref{def:dbarample} (2) (see e.g. \cite[Proposition 3.3]{demailly}).
   \item When $D = \emptyset$, ``almost ample'' (log) cotangent implies that all subvarieties of $X$ are of general type.
 \end{enumerate} 
 \end{remark}
 
 Before preceding, we recall a result of Campana-P\u{a}un.  
 
 \begin{theorem}\cite[Theorem 4.1]{cp}\label{thm:cp} Let $X$ be a smooth projective variety, and $\Delta \subseteq X$ a reduced divisor with at worst normal
crossing singularities. If some tensor power of $\Omega^1_X(\log \Delta)$ contains a subsheaf with big determinant,
then $K_X + \Delta$ is big. \end{theorem}
 
 We will now show that positivity of the log cotangent on a log smooth pair $(X,D)$ implies that all subvarieties (not contained in $D$) are of log general type.
 
\begin{theorem}\label{thm:ampleness} 
Let $(X,D)$ be log canonical pair with almost ample log cotangent $\Omega^{[1]}_X(\log D)$. Then all pairs $\big(Y, E)$, where $E := (Y \cap D)_{red}$, with $Y \subset X$ such that $Y$ is neither contained in $D$ nor $\textrm{Sing(X)}$ are of log general type.

\end{theorem}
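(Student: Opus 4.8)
The plan is to reduce to the log-smooth situation via a log resolution and then to apply the bigness criterion for pulled-back subsheaves of the log cotangent, in the spirit of the argument for ample $\Omega^1_X$ but working reflexively. Let $Y \subset X$ be irreducible, not contained in $D$ nor in $\operatorname{Sing}(X)$, and set $E = (Y \cap D)_{\red}$. First I would take a log resolution $\pi : \widetilde{X} \to X$ of $(X,D)$ that also resolves $Y$: let $\widetilde{Y}$ be the strict transform, which we may assume smooth, and choose $\widetilde{D}$ as in Theorem \ref{thm:gk} so that $\pi_*\Omega^1_{\widetilde{X}}(\log\widetilde{D}) = \Omega^{[1]}_X(\log D)$. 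Since $Y$ is not contained in $\operatorname{Sing}(X)$ nor in $D$, the generic point of $\widetilde{Y}$ maps into the locus where $\pi$ is an isomorphism and where $(X,D)$ is log smooth; hence over a dense open $\widetilde{Y}^\circ \subset \widetilde{Y}$ there is a natural surjection of vector bundles $\Omega^1_{\widetilde{X}}(\log\widetilde{D})|_{\widetilde{Y}^\circ} \twoheadrightarrow \Omega^1_{\widetilde{Y}^\circ}(\log \widetilde{E}^\circ)$, where $\widetilde{E}$ is the reduced preimage of $E$ on $\widetilde{Y}$. Equivalently, dualizing, $T_{\widetilde{Y}^\circ}(-\log\widetilde{E}^\circ) \hookrightarrow T_{\widetilde{X}}(-\log\widetilde{D})|_{\widetilde{Y}^\circ}$ is a subbundle generically on $\widetilde{Y}$.

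The next step is positivity. Almost ampleness of $\Omega^{[1]}_X(\log D)$ means it is big and its augmented base locus $\mathbf{B}_+$ is contained in $\operatorname{Supp}(D)$. Pulling back along $\pi$ (and using that $\pi$ is an isomorphism over the generic point of $Y$), one deduces that the restriction $\Omega^1_{\widetilde{X}}(\log\widetilde{D})|_{\widetilde{Y}}$ is big as a sheaf on $\widetilde{Y}$: indeed bigness plus $\mathbf{B}_+ \subseteq \operatorname{Supp}(D)$ says that for an ample $\calH$ and suitable $a$, $\Sym^{[a]}\Omega^{[1]}_X(\log D) \otimes \calH^{-1}$ is globally generated away from $D$, hence its pullback is globally generated along $\widetilde{Y}$ away from $\widetilde{D} \cap \widetilde{Y}$, which exhibits generic global generation of $\Sym^a\big(\Omega^1_{\widetilde{X}}(\log\widetilde{D})|_{\widetilde{Y}}\big)\otimes \calH^{-1}|_{\widetilde{Y}}$; since $\widetilde{Y}\not\subset \widetilde D$, this is nonzero, so the restricted bundle is big on $\widetilde{Y}$. (This is exactly where the hypothesis $Y \not\subset D$ and the condition on $\mathbf{B}_+$ are used, and where one must be careful that ``big'' is preserved under restriction to subvarieties meeting the complement of $\mathbf{B}_+$.)

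Now I would invoke the positivity-of-quotients/subsheaves machinery of Campana–Păun \cite{cp}: from the generic subbundle inclusion $T_{\widetilde{Y}^\circ}(-\log\widetilde{E}^\circ) \hookrightarrow \big(T_{\widetilde{X}}(-\log\widetilde{D})\big)|_{\widetilde{Y}^\circ}$, dually a generic quotient $\Omega^1_{\widetilde{X}}(\log\widetilde{D})|_{\widetilde{Y}} \dashrightarrow \Omega^1_{\widetilde{Y}}(\log\widetilde{E})$, and the bigness of $\Omega^1_{\widetilde{X}}(\log\widetilde{D})|_{\widetilde{Y}}$, conclude that $\Omega^1_{\widetilde{Y}}(\log\widetilde{E})$ (or a suitable symmetric power / subsheaf of its determinant) is big; more precisely one obtains that $K_{\widetilde Y} + \widetilde E = \det \Omega^1_{\widetilde{Y}}(\log\widetilde{E})$ is big. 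Here the relevant input from \cite{cp} is that a subsheaf of a big (twisted) cotangent sheaf has big determinant, or the Viehweg-type weak-positivity statement that a quotient of a big sheaf restricted to a subvariety through which it stays big cannot have non-big determinant. Finally, since $(\widetilde{Y},\widetilde{E})$ is a log-smooth model of the pair $(Y,E)$ — $\widetilde Y\to Y$ is a resolution and $\widetilde Y\setminus\widetilde E$ maps to $Y\setminus E$ — bigness of $K_{\widetilde Y}+\widetilde E$ is exactly the statement that $(Y,E)$ is of log general type (Definition \ref{def:loggt} and Remark \ref{rmk:loggt}), which completes the proof.

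The main obstacle I anticipate is the second paragraph combined with the application of \cite{cp}: making rigorous that ``big with $\mathbf{B}_+$ in $\operatorname{Supp}(D)$'' restricts well to $\widetilde Y$ (one needs $\widetilde Y$ to meet the locus of ampleness, which is why $Y\not\subset D$ is essential, and one must handle the reflexive hull / saturation issues when passing between $\Omega^{[1]}$ on $X$ and the honest $\Omega^1$ on the resolution), and then correctly quoting the Campana–Păun positivity statement in the form ``a torsion-free quotient sheaf on $\widetilde Y$ of a big sheaf has pseudoeffective — in fact big, after accounting for the log structure — determinant.'' The log-smooth corollary (Corollary \ref{cor:ampleness}) is then immediate by taking $X$ itself log smooth so that no resolution is needed and $\operatorname{Sing}(X)=\emptyset$, and the slc case (Corollary \ref{cor:slcample}) follows by pulling everything back to the normalization $(X^\nu, D^\nu+\dblv)$ and applying the lc case there.
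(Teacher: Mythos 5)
Your proposal is correct and follows essentially the same route as the paper: log resolution plus Theorem \ref{thm:gk} to produce a generically surjective map from $\phi^*\Omega^{[1]}_X(\log D)$ to the log cotangent of a resolution $(\overline{Y},\overline{E})$ of the strict transform, bigness of the source along $Y$ because $Y\not\subset D\supseteq\mathbf{B}_+$, and Campana--P\u{a}un to conclude that $K_{\overline{Y}}+\overline{E}$ is big. The only cosmetic difference is in how you quote \cite{cp}: the form actually used is that the \emph{image} of the pulled-back sheaf is a big subsheaf of $\Omega^1_{\overline{Y}}(\log\overline{E})$, hence has big determinant, and \cite[Theorem 4.1]{cp} then says a subsheaf of the log cotangent with big determinant forces $K_{\overline{Y}}+\overline{E}$ to be big.
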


\begin{proof} Consider a log resolution $(\widetilde{X}, \widetilde{D}) \to (X, D)$ and let $(\widetilde{Y}, \widetilde{E})$ be the strict transform of $Y$ and the pullback of $E$ with reduced structure.  Let $(\overline{Y}, \overline{E})$ be a log resolution of $(\tilde{Y}, \tilde{E})$.  The composition gives a map $\phi: (\overline{Y}, \overline{E}) \to (X,D)$. Since $Y$ is not contained in $D$, by the definition of almost ample, $Y$ is not contained in the base locus of $\Omega^{[1]}_X(\log D)$.  Using Theorem \ref{thm:gk}, this gives a map $\phi^*(\Omega^{[1]}_X(\log D)) \to \Omega_{\overline{Y}}(\log \overline{E})$. The image of this map is a big subsheaf of $\Omega^1_{\overline{Y}}(\log \overline{E})$, being a quotient of a big sheaf, and thus its determinant is also big. By Theorem \ref{thm:cp}, $K_{\overline{Y}} + \overline{E}$ is big, and so $(Y,E)$ is of log general type. \end{proof}

\begin{remark}
In the above proof, we used that given Definition \ref{def:ample}, the quotient of a big sheaf is a big sheaf. We stress that this is not true for other notions of big. The key idea is to be big there is a generically surjective map, and this map remains generically surjective when restricting to a subvariety not contained in the base locus (in this case a subvariety contained in the divisor $D$).   
\end{remark}

This result extends to semi-log canonical pairs by applying Theorem \ref{thm:ampleness} to the normalization.  

\begin{corollary}\label{cor:slcample}
If $(X,D)$ is an slc pair with almost ample log cotangent, then any irreducible subvariety of $X$ not contained in either $D$ or $\textrm{Sing}(X)$ is of log general type.
\end{corollary}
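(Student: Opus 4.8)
The plan is to deduce the statement immediately from Theorem~\ref{thm:ampleness} by passing to the normalization. Let $\nu\colon X^\nu\to X$ be the normalization. By condition~(2) of Definition~\ref{def:slc} the pair $(X^\nu, D^\nu+\dblv)$ is log canonical, and by the last sentence of Definition~\ref{def:dbarample} the hypothesis that $(X,D)$ has almost ample log cotangent means exactly that $\Omega^{[1]}_{X^\nu}(\log(D^\nu+\dblv))$ is almost ample. Hence Theorem~\ref{thm:ampleness} applies to $(X^\nu, D^\nu+\dblv)$, and the only thing to do is to lift the given $Y$ to a subvariety of $X^\nu$ satisfying the hypotheses of that theorem and then to transport the conclusion back along $\nu$.

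For the lift: since $Y\not\subseteq\textrm{Sing}(X)$, it is in particular not contained in the non-normal locus of $X$, so $\nu^{-1}(Y)$ has a unique component $Y^\nu$ dominating $Y$ (its strict transform), and $\nu|_{Y^\nu}\colon Y^\nu\to Y$ is finite and birational, being an isomorphism over the dense open subset of $Y$ contained in the smooth locus of $X$. Then I would check the three hypotheses of Theorem~\ref{thm:ampleness} for $Y^\nu\subseteq X^\nu$: it is not contained in $D^\nu$ because $Y\not\subseteq D$; it is not contained in $\dblv$ because $\dblv$ maps into the non-normal locus of $X$, which lies in $\textrm{Sing}(X)$ and therefore cannot contain $Y$; and it is not contained in $\textrm{Sing}(X^\nu)$, because $\nu$ is an isomorphism over the smooth locus of $X$, so $\textrm{Sing}(X^\nu)\subseteq\nu^{-1}(\textrm{Sing}(X))$, whence $Y^\nu\subseteq\textrm{Sing}(X^\nu)$ would force $Y=\nu(Y^\nu)\subseteq\textrm{Sing}(X)$. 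Theorem~\ref{thm:ampleness} then gives that $\bigl(Y^\nu, (Y^\nu\cap(D^\nu+\dblv))_{red}\bigr)$ is of log general type; since $\nu|_{Y^\nu}$ is proper and birational, and ``openly log general type'' depends only on the birational class of the underlying quasi-projective variety (as recalled after Definition~\ref{def:loggt}), this is precisely the assertion that $Y$, with its induced boundary, is of log general type.

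I do not expect a genuine obstacle here: all the geometric content already lives in Theorem~\ref{thm:ampleness} (the extension statement, Theorem~\ref{thm:gk}, together with \cite[Theorem~4.1]{cp}), and the present argument is formal. The one point to state carefully is which boundary appears on $Y$: the divisor for which log general type is proved is $(Y^\nu\cap(D^\nu+\dblv))_{red}$, which records the trace of the double locus of $X$ along $Y$ in addition to $(Y\cap D)_{red}$ and agrees with the latter over the normal locus of $X$; this is also the boundary relevant to the integral-points applications. The hypothesis $Y\not\subseteq\textrm{Sing}(X)$ enters precisely in the third verification above, keeping $Y^\nu$ out of $\textrm{Sing}(X^\nu)$, as well as beforehand, to guarantee that $Y^\nu$ is a well-defined strict transform finite and birational over $Y$.
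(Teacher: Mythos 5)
Your proof is correct and follows exactly the route the paper takes: the paper's entire argument for Corollary~\ref{cor:slcample} is the one sentence ``apply Theorem~\ref{thm:ampleness} to the normalization,'' and you have simply filled in the (correct) details of lifting $Y$ to its strict transform in $(X^\nu, D^\nu+\dblv)$, verifying the hypotheses there, and transporting the conclusion back along the finite birational map. Your remark about which boundary actually appears on $Y$ (namely $(Y^\nu\cap(D^\nu+\dblv))_{red}$, recording the trace of the double locus) is a useful precision that the paper leaves implicit.
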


Finally, we can state a stronger result if $(X,D)$ is log smooth.

\begin{corollary}\label{cor:ampleness}Let $(X,D)$ be a log smooth pair with almost ample $\Omega^1_X(\log D)$. Then all pairs $(Y,E)$, where $E := (Y \cap D)_{red}$, with $Y \subset X$ and $Y$ not contained in $D$ are of log general type. \end{corollary}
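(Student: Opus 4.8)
The plan is to read this off from Theorem \ref{thm:ampleness} with essentially no new work. First I would note that a log smooth pair $(X,D)$ is in particular log canonical, so Theorem \ref{thm:ampleness} applies to it, and that since $X$ is smooth the open set $U \subset X$ appearing in the definition of the reflexive log differentials is all of $X$; hence $\Omega^{[1]}_X(\log D) = \Omega^1_X(\log D)$ and the two notions of almost ampleness (Definition \ref{def:dbarample}) agree in this case. Thus the hypothesis ``$\Omega^1_X(\log D)$ almost ample'' is literally the hypothesis of Theorem \ref{thm:ampleness}.

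Second, because $X$ is smooth we have $\mathrm{Sing}(X) = \emptyset$, so for every nonempty subvariety $Y \subset X$ the requirement ``$Y$ not contained in $\mathrm{Sing}(X)$'' in Theorem \ref{thm:ampleness} is automatically met. (If one allows $Y$ reducible, apply the statement to each irreducible component not contained in $D$; the components that do lie in $D$ are excluded by hypothesis anyway.) The only surviving condition is ``$Y$ not contained in $D$,'' which is assumed. Invoking Theorem \ref{thm:ampleness} then yields that $(Y, E)$ with $E = (Y\cap D)_{red}$ is of log general type, which is the claim.

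I do not expect any genuine obstacle here: all of the content — taking a log resolution of $(X,D)$ (trivial once $(X,D)$ is already log smooth), passing to the strict transform of $Y$ and a further log resolution $\overline{Y} \to \widetilde{Y}$, extending the reflexive log differentials via Theorem \ref{thm:gk}, restricting to $\overline{Y}$ to obtain a big subsheaf of $\Omega^1_{\overline{Y}}(\log \overline{E})$ with big determinant, and concluding $K_{\overline{Y}}+\overline{E}$ big by \cite[Theorem 4.1]{cp} — is already carried out in the proof of Theorem \ref{thm:ampleness}. The single point to keep in view is the (harmless) base-locus check: since $\mathbf{B}_+(\Omega^1_X(\log D)) \subseteq \textrm{Supp}(D)$ and $Y \not\subset D$, the subvariety $Y$ meets the locus on which the relevant twisted symmetric power is globally generated, so the restricted sheaf is genuinely big and the argument of Theorem \ref{thm:ampleness} applies verbatim. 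In short, this corollary is just the specialization of Theorem \ref{thm:ampleness} to the case where the singular locus of $X$ is empty.
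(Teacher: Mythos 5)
Your proposal is correct and matches the paper's (implicit) argument exactly: the corollary is stated as an immediate specialization of Theorem \ref{thm:ampleness}, and your observations that a log smooth pair is log canonical with $\Omega^{[1]}_X(\log D)=\Omega^1_X(\log D)$ and $\mathrm{Sing}(X)=\emptyset$ are precisely what make the hypotheses of that theorem collapse to ``$Y\not\subset D$.'' No further work is needed.
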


As we saw in the introduction, we can use Corollary \ref{cor:ampleness} to prove Theorem \ref{thm:moriwaki1} on finiteness of integral points. However, for Theorem \ref{thm:mainthm} (uniformity), we will need the result for slc pairs, and also need control over subvarieties contained in the singular locus. We are only able to control such subvarieties for surfaces (see Remark \ref{prop:ampleness} and Corollary \ref{cor:subvarietyprop}). 

\begin{remark}
\label{prop:ampleness}
In the case of surfaces, we can prove Theorem \ref{thm:ampleness} directly without using \cite{cp}.  Further, since the only singular points of an lc surface are points, we can prove a slightly stronger statement. Let $(X,D)$ be log canonical surface pair with almost ample log cotangent $\Omega^{[1]}_X(\log D)$. Then all pairs $\big(Y, E)$, where $E := (Y \cap D)_{red}$, with $Y \subset X$ irreducible and not contained in $D$ are of log general type.
 We include the proof below. 
\end{remark}

\begin{proof}[Proof of Remark \ref{prop:ampleness}]
Since $\Omega^{[1]}_X (\log D)$ is big with prescribed base locus, its restriction to a curve $Y \subset X$ irreducible and not contained in $D$ is also big. Since $Y$ is a curve, big is equivalent to ample. Consider the normalization $\phi: Y^\nu \to Y$, and denote by $E^\nu = \phi^{-1}(E)$.  As $\Omega^{[1]}_X (\log D) \mid_Y$ is ample, its pullback $\phi^*(\Omega^{[1]}_X(\log D) \mid_Y)$ is big. By Theorem \ref{thm:gk} (see \cite[Theorem 1.4]{GK}), there is a generically surjective map
 \[ \phi^*(\Omega^{[1]}_X (\log D) \mid_Y) \to \Omega^1_{Y^\nu}(\log E^\nu) = \calO_{Y^\nu}(K_{Y^\nu} + E^\nu)\] 
 so that $K_{Y^\nu} + E^\nu$ is big, and thus $(Y,E)$ is of log general type. \end{proof}


\section{Generalizations of Moriwaki's results and finiteness of integral points}\label{sec:moriwaki}
In \cite{moriwaki}, Moriwaki proved that for smooth projective varieties over number fields $K$ with globally generated cotangent bundle, every irreducible component of $\overline{X(K)}$ is geometrically irreducible and isomorphic to an abelian variety. Moreover, if in addition the cotangent bundle is ample then there are only finitely many $K$ points.  We now show that these results generalize for integral points on log smooth surfaces by replacing ``ample cotangent'' with ``almost ample log cotangent''. We stress that the ideas in the proofs below are the ideas of Moriwaki, and we are simply applying them to our newer framework. Let $V$ denote a smooth quasi-projective surface with log smooth completion $(X,D)$,  let $\calA_V$ denote the \emph{quasi-Albanese} variety, and let $\alpha: V \to \calA_V$ denote the quasi-Albanese morphism (see \cite[Section 2.7]{fujinoalb}). 

\begin{theorem}\label{thm:moriwakimain}Let $V$ be a smooth quasi-projective variety with log smooth compactification $(X, D)$ over a number field $K$. If the log cotangent sheaf $\Omega^1_X(\log D)$ is globally generated and almost ample, then for any finite set of places $S$ the set of $S$-integral points $V(\calO_{K,S})$ is finite.\end{theorem}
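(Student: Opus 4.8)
The plan is to follow Moriwaki's strategy, using the quasi-Albanese map $\alpha \colon V \to \calA_V$ as a replacement for the Albanese, and Vojta's extension of Faltings' theorem to semi-abelian varieties in place of the Mordell--Lang input. The global generation of $\Omega^1_X(\log D)$ is what makes $\alpha$ well-behaved: it guarantees that the space of global log $1$-forms $H^0(X,\Omega^1_X(\log D))$ separates tangent directions on $V$, so $\alpha$ is an immersion on $V$ (this is exactly the point where Moriwaki uses global generation of $\Omega^1_X$ to show the Albanese is an immersion). In particular $\alpha$ is quasi-finite onto its image.

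First I would record that $V(\calO_{K,S})$ maps into $\calA_V(\calO_{K,S}')$ for a suitable enlargement $S'$ of $S$ (the quasi-Albanese is defined over $K$ after a finite extension, and one absorbs the finitely many bad places into $S'$), so it suffices to bound the integral points of the image subvariety $W := \overline{\alpha(V)} \subseteq \calA_V$, pulled back along the quasi-finite $\alpha$. Next, by Vojta's theorem \cite{vojta1}, the set of $S'$-integral points of $W$ (with respect to the boundary coming from $\calA_V$) is contained in a finite union of translates of semi-abelian subvarieties of $\calA_V$ contained in $W$. The heart of the argument is then to show $W$ contains no positive-dimensional translated semi-abelian subvariety: if $B + a \subseteq W$ with $\dim B \geq 1$, then its preimage in $V$ (or rather in a partial compactification / in $X$) would be a positive-dimensional subvariety $Y \subseteq X$ not contained in $D$, and pulling the log $1$-forms back to $Y$ one sees, via the quasi-Albanese of a semi-abelian variety being trivial, that $K_{\overline Y} + \overline E$ cannot be big — contradicting Corollary \ref{cor:ampleness}, which says every such $(Y,E)$ is of log general type. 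Thus the integral points lie in a finite union of $\alpha$-preimages of single points, hence — using that $\alpha|_V$ is quasi-finite — form a finite set.

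Concretely the steps are: (1) global generation $\Rightarrow$ $\alpha|_V$ is a quasi-finite immersion; (2) reduce $V(\calO_{K,S})$ to integral points of $W = \overline{\alpha(V)}$ in $\calA_V$ after enlarging $S$; (3) apply Vojta's theorem to get these integral points inside finitely many translated semi-abelian subvarieties $B_j + a_j \subseteq W$; (4) show each $B_j$ is a point, because otherwise $\alpha^{-1}(B_j + a_j)$ gives a subvariety of $X$ not contained in $D$ whose log-canonical divisor is not big — impossible by Corollary \ref{cor:ampleness} since almost ampleness of $\Omega^1_X(\log D)$ forces it to be of log general type; (5) conclude finiteness from quasi-finiteness of $\alpha|_V$.

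The main obstacle I anticipate is step (4): carefully relating a translated semi-abelian subvariety $B + a$ of $\calA_V$ contained in the image to an honest subvariety $(Y,E)$ of the pair $(X,D)$ to which Corollary \ref{cor:ampleness} applies. One must take the closure of $\alpha^{-1}(B+a) \cap V$ inside $X$, check it is not contained in $D$ (it meets $V$ by construction), and verify that log general type of $(Y,E)$ contradicts $Y$ dominating a positive-dimensional semi-abelian variety via a generically finite map — i.e. one needs the standard fact that a (partial) compactification of a semi-abelian variety, or anything generically finite over it, is not of log general type, together with the functoriality of log differentials under the dominant map $Y \dashrightarrow B$. Managing the boundary divisors and the places where $\alpha$ fails to be defined or finite (handled by enlarging $S$) is the bookkeeping cost; the geometric content is entirely carried by Corollary \ref{cor:ampleness} and Vojta's theorem.
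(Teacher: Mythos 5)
Your overall strategy is not the one the paper uses for Theorem \ref{thm:moriwakimain}: the paper argues by contradiction on a positive-dimensional component $Y$ of $\overline{V(\calO_S)}$, uses global generation together with bigness of the restriction of $\Omega^1_X(\log D)$ to run Moriwaki's Lemma 2.3 and obtain the dimension estimate $\dim(\calA_Y) \geq 2\dim Y$, and then invokes the semi-abelian version of Moriwaki's Theorem 1.1 to contradict density of the integral points in $Y$. Your route --- quasi-Albanese immersion, Vojta's theorem, then ruling out positive-dimensional semi-abelian subvarieties via Corollary \ref{cor:ampleness} --- is essentially the alternative derivation the paper itself advertises (Theorem \ref{thm:moriwaki1} as a consequence of Theorem \ref{thm:introample} together with Theorem \ref{thm:moriwaki2}), so the strategy is legitimate, and your steps (1)--(3) match the proof of Theorem \ref{thm:moriwakisub}.

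Step (4), however, has a genuine gap in two places. First, the claim that ``anything generically finite over a positive-dimensional semi-abelian variety is not of log general type'' is false: a degree-two cover of $\G_m$ branched at a few points of $\G_m$ can be a curve of negative Euler characteristic, hence of log general type, yet it admits a finite dominant map to $\G_m$. What is true, and what you actually need, is that a variety admitting a dominant, quasi-finite, \emph{unramified} map onto a semi-abelian variety is (an open subset of) a finite \'etale cover of it, hence itself semi-abelian by \cite[Theorem 4.2]{fujinoalb} and therefore not of log general type; the unramifiedness is exactly what global generation buys you in step (1), but you discard it when you weaken to ``generically finite''. Second, your reduction does not establish that the relevant subvariety \emph{dominates} $B_j + a_j$: applying Vojta once to $W = \overline{\alpha(V)}$ produces translates $B_j + a_j \subseteq W$, but $W$ is only the closure of $\alpha(V)$, so $\alpha^{-1}(B_j+a_j)\cap V$ may have dimension strictly smaller than $\dim B_j$ (or be empty), in which case its closure in $X$ is a log general type subvariety mapping quasi-finitely \emph{into} $B_j+a_j$ --- no contradiction. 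To repair this you must work with an irreducible component $Y$ of $\overline{V(\calO_S)}$: density of the integral points in $Y$ forces $\overline{\alpha(Y\cap V)}$ to \emph{equal} one of the translated semi-abelian subvarieties produced by Vojta, and only then is $Y \cap V \to B+a$ dominant, quasi-finite and unramified, yielding the contradiction with Corollary \ref{cor:ampleness}.
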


\begin{proof}[Proof]
Assume that $Y$ is an irreducible component of $\overline{V(\calO_S)}$ with $\dim Y \geq 1$ and completion $(\overline{Y}, \overline{E})$. Since $\Omega^1_X(\log D)$ is almost ample and globally generated, its restriction to $\overline{Y}$ is as well. As in \cite[Lemma 2.3]{moriwaki}, we first show $\dim(\calA_Y) \geq 2 \dim Y$.  Indeed, the proof of \cite[Lemma 2.3]{moriwaki} holds verbatim by defining the sheaf $L = \im \left( \mu^*(\Omega^{1}_{X}(\log D)) \to \Omega^1_{Y'}(\log E') \right)$, where $\mu: Y' \to \overline{Y}$ is an appropriate resolution.  The generically surjective map $\mu^*(\Omega^{1}_{X}(\log D)) \to \Omega^1_{Y'}(\log E')$ exists by Theorem \ref{thm:gk}, and the sheaf $L$ is big, globally generated, and locally free of rank $\dim Y$ by choice of resolution $\mu$, and the proof of \cite[Lemma 2.3]{moriwaki} applies directly. 

Finally, by \cite[Theorem 1.1]{moriwaki}, because $\dim(\calA_Y) \geq 2 \dim Y$, we have that $Y(\calO_S)$ is not dense in $Y$, which is a contradiction.  Indeed, Moriwaki's proof of \cite[Theorem 1.1]{moriwaki} works by replacing $\alpha$ by the quasi-Albanese morphism (see \cite{fujinoalb}) and by replacing Faltings' theorem (\cite[Theorem A]{moriwaki}) by Vojta's theorems (\cite{vojta1, vojta3}). \end{proof}

\begin{theorem}\label{thm:moriwakisub} Let $V \cong (X \setminus D)$ be a log smooth variety over $K$. If the log cotangent sheaf $\Omega^1_X(\log D)$ is globally generated, then for any finite set of places $S$, every irreducible component of $\overline{V(\calO_S)}$ is geometrically irreducible and isomorphic to a semiabelian variety. \end{theorem}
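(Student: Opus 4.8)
The plan is to follow Moriwaki's proof of \cite[Theorem 1.1]{moriwaki} essentially verbatim, replacing the Albanese with the quasi-Albanese and Faltings' theorem on subvarieties of abelian varieties with Vojta's theorem \cite{vojta1} on subvarieties of semi-abelian varieties. First I would recall the basic properties of the quasi-Albanese morphism $\alpha: V \to \calA_V$ (see \cite{fujinoalb}): it is defined by the logarithmic $1$-forms on $(X,D)$, and the pullback $\alpha^*$ on the relevant cotangent spaces is the tautological one. The key geometric input is that since $\Omega^1_X(\log D)$ is globally generated, the global logarithmic $1$-forms generate the log cotangent sheaf at every point, which forces $\alpha$ to be unramified (an immersion at each point) — this is exactly where the globally generated hypothesis is used, and it mirrors Moriwaki's Lemma for the projective case.

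The main step is then to analyze an irreducible component $Y$ of $\overline{V(\calO_S)}$. The argument is: by Vojta's theorem, the Zariski closure of the $S$-integral points of a semi-abelian variety is a finite union of translates of semi-abelian subvarieties; so the image $\alpha(Y) \subseteq \calA_V$, being contained in the closure of the image of the integral points, lies in such a translated semi-abelian subvariety $B + a$. Next, one shows $Y \to \alpha(Y)$ is generically finite (indeed birational onto its image) using that $\alpha$ is unramified, hence $\dim Y = \dim \alpha(Y) = \dim B$. Then one must upgrade "$\alpha(Y)$ is contained in a translate of a semi-abelian subvariety and $\alpha|_Y$ is birational onto its image" to "$Y$ is itself isomorphic to a semi-abelian variety." This uses that a subvariety of a semi-abelian variety which is not itself a translate of a semi-abelian subvariety must, by Vojta/Kawamata structure theory for semi-abelian varieties, have its integral points non-dense; combined with the unramifiedness of $\alpha$, one deduces that $\alpha(Y)$ is a translate $B+a$ and that $\alpha|_Y : Y \xrightarrow{\sim} B+a \cong B$, a semi-abelian variety. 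Geometric irreducibility follows because a semi-abelian variety over $K$ with a $K$-point (which $Y$ has, being the closure of integral points) is geometrically connected, hence geometrically irreducible as it is smooth.

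The hard part — or rather, the point requiring the most care — will be ensuring that all of Moriwaki's intermediate lemmas (e.g. that $\alpha|_Y$ is not just generically finite but actually a closed immersion onto its image, and that the image is genuinely a \emph{translated subgroup} rather than merely contained in one) go through with "cotangent bundle" replaced by "log cotangent sheaf" and "abelian variety" by "semi-abelian variety." The quasi-Albanese is only a semi-abelian variety (an extension of an abelian variety by a torus), so one must be careful that the needed structure results — the Ueno-type fibration theorem and Kawamata's structure theorem for subvarieties of semi-abelian varieties — are available in the form Vojta uses in \cite{vojta1}; fortunately they are, and Moriwaki's argument is formal once these inputs are in place. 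I would also note, as in the previous theorem's proof, that the global generation hypothesis is preserved under restriction to $\overline{Y}$ with its log completion $(\overline{Y},\overline{E})$, so one may work directly with $Y$ in place of $V$ throughout the induction.
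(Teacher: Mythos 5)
Your overall plan coincides with the paper's: use global generation to get a surjection $\alpha^*(\Omega^1_{\calA_V}) \to \Omega^1_V$ (via Fujino's identification $H^0(V,\Omega^1_V)\otimes\calO_{\calA_V}\cong\Omega^1_{\calA_V}$), hence $\alpha$ is unramified, then apply Vojta's theorem on integral points of semi-abelian varieties to $\alpha(Y)$ and transfer the structure back to $Y$. That is exactly the route the paper takes, following Moriwaki's Theorem B.

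The one step that does not hold as written is your endgame. You assert that unramifiedness of $\alpha$ makes $\alpha|_Y$ ``birational onto its image,'' and then that $\alpha|_Y\colon Y\xrightarrow{\sim} B+a$. Unramifiedness only gives that $\alpha|_Y$ is quasi-finite (an immersion locally on the source); it does not give birationality -- an isogeny of elliptic curves, or any nontrivial finite \'etale cover, is unramified and surjective but has degree $>1$. Nothing in the hypotheses forces $\alpha|_Y$ to be a closed immersion or an isomorphism onto $B+a$, and the theorem does not claim this. The correct conclusion of the argument, and the one the paper uses, is weaker and suffices: once Vojta's theorem identifies $\alpha(Y)$ with a translate of a semi-abelian subvariety, the map $Y\to\alpha(Y)$ is finite and unramified, hence a finite \'etale cover, and one then invokes the fact that finite \'etale covers of semi-abelian varieties are again semi-abelian varieties (\cite[Theorem 4.2]{fujinoalb}). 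So $Y$ is semi-abelian without being identified with $B+a$ itself. Your proof becomes correct if you replace the claimed isomorphism with this \'etale-cover step; as written, the deduction ``unramified $\Rightarrow$ birational'' is a genuine error, and you have flagged but not supplied the missing argument. (Minor additional remarks: there is no induction in this proof, so the closing sentence about restricting the hypothesis to $(\overline{Y},\overline{E})$ is not needed here; and one should also note, as in Lemma \ref{lemma:spread}, that integral points of $V$ map to integral points of a model of $\calA_V$ after enlarging $S$.)
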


\begin{proof}[Proof]
  Since $\Omega^1_X(\log D)$ is assumed to be globally generated,  its restriction to $V$, namely $\Omega^1_V$ is as well, so that there is a surjection $H^0(V, \Omega^1_V) \otimes \calO_V \to \Omega^1_V.$ By \cite[Lemma 3.12]{fujinoalb}, $H^0(V, \Omega^1_V) \otimes \calO_{\calA_V} \cong \Omega^1_{\calA_V}$, so pulling back by $\alpha: V \to \calA_V$ gives a surjection $\alpha^*(\Omega^1_{\calA_V}) \to \Omega^1_V$. Therefore, similarly to \cite[Theorem B]{moriwaki}, every irreducible component of $\overline{V(\calO_S)}$ is isomorphic to a semi-abelian variety. Indeed, Vojta's Theorem (\cite{vojta1, vojta3}) implies that every irreducible component of a semi-abelian variety containing infinitely many integral points is a translate of a semi-abelian subvariety. Therefore the same argument as in Moriwaki's proof applies since smooth \'etale covers of semi-abelian varieties are semi-abelian varieties by \cite[Theorem 4.2]{fujinoalb}.
\end{proof}

We now shift gears to consider uniformity, ultimately using these hyperbolicity results to obtain uniformity for surfaces. We begin with the case of curves.


\section{Uniformity for log stable curves}\label{sec:curves}
Abramovich observed \cite{Aell} that uniformity statements for integral points on curves cannot hold without any restrictions on the model (see \cite[0.3]{AM} - for an example and discussion), and instead one should consider stable models. Such a choice provides good models (see section \ref{sec:models} for a more general framework that extends to higher dimension) which possess ``positivity'' properties preventing the appearance of non-hyperbolic components in the model. As a result, such positivity does not allow the number of integral points to grow arbitrarily. Abramovich's notion of stably integral points for the complement of the origin in an elliptic curve can be easily generalized to any stable pointed curve $(C,D)$ as follows.
 
\begin{definition}
\label{def:stab_c}
Let $(C,D)$ be a stable pointed curve defined over $K$. A point $P \in (C\setminus D)(K)$ is called a \emph{stably} $(S,D)$-integral point if there exists a finite extension $L \supset K$ and a stable model $(\calC, \calD)$ over $\Spec \calO_{L,S_L}$ such that $P$ is a $(S_L,\calD)$-integral for $\calC$. Equivalently $P \in (\calC \setminus \calD) (\calO_{L,S_L})$.
\end{definition}

Given Definition \ref{def:stab_c} one can ask whether the same uniformity results proved in \cite{Aell} hold more generally for a pair $(C,D)$ of openly log general type. To prove this we introduce the following:

\begin{definition}
  \label{def:correl}
 Let $(C,D) \to B$ be a family of pointed stable curves over $K$. Given a subset $\calP \subset C(K)$, denote by $\calP^n \subset C^n_B$ the $n^{th}$ fibered power of $\calP$ over $B$. Then $\calP$ is \emph{$n$-correlated} if there exists an $n > 0$ such that $\calP^n$ is contained in a proper closed subset of $C^n_B$.
\end{definition}

The importance of $n$-correlated sets for uniformity questions is apparent in the following:

\begin{lemma}
  Let $C \to B$ a family of projective irreducible curves and let $\calP$ be an $n$-correlated subset of $C(K)$. There exists a nonempty open set $U \subset B$ and an $N \in \mathbb{N}_{>0}$ such that for every $b \in U$, $\calP \cap C_b \leq N$.
  \label{lemma:n-corr}
\end{lemma}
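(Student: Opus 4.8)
The plan is to prove the lemma by peeling off the $n$ coordinates of the fibered power one at a time, using a dimension count at each stage to exploit $n$-correlation. Since the conclusion only asks for a \emph{dense open} $U\subseteq B$, I would first shrink $B$ so that it is irreducible and the fibers of $C\to B$ are geometrically irreducible; then each fibered power $C^m_B$ is irreducible of dimension $\dim B+m$ and equals $C^{m-1}_B\times_B C$. By hypothesis there is a proper closed subset $\Sigma^{(0)}\subsetneq C^n_B$ with $\calP^n\subseteq \Sigma^{(0)}$.

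Next I would recursively construct proper closed subsets $\Sigma^{(k)}\subsetneq C^{n-k}_B$ and integers $M^{(k)}\in\N$, for $1\le k\le n$. Given $\Sigma^{(k)}$, let $q_k\colon C^{n-k}_B\to C^{n-k-1}_B$ be the projection dropping the last coordinate; as a base change of $C\to B$ it is proper with irreducible $1$-dimensional fibers (the curves $C_b$). Since $\dim\Sigma^{(k)}\le\dim C^{n-k}_B-1=\dim C^{n-k-1}_B$, no component of $\Sigma^{(k)}$ can dominate $C^{n-k-1}_B$ with positive-dimensional generic fiber; hence the components of $\Sigma^{(k)}$ dominating $C^{n-k-1}_B$ are generically finite over it, and the rest have image in a proper closed subset. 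So there is a proper closed $\Sigma^{(k+1)}\subsetneq C^{n-k-1}_B$ and an $M^{(k+1)}$ with $\#\bigl(\Sigma^{(k)}\cap q_k^{-1}(w)\bigr)\le M^{(k+1)}$ for every $w\in C^{n-k-1}_B\setminus\Sigma^{(k+1)}$. Iterating down to $C^0_B=B$ produces $\Sigma^{(n)}\subsetneq B$.

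I would then set $U:=B\setminus\Sigma^{(n)}$ and $N:=\max_{1\le k\le n}M^{(k)}$ and check $\#(\calP\cap C_b)\le N$ for $b\in U$. Fix such a $b$, write $\calP_b:=\calP\cap C_b$, and assume $\#\calP_b\ge 2$; note that the fiber of $\calP^m$ over $b$ is $(\calP_b)^m\subseteq (C_b)^m\subseteq C^m_B$, and $(\calP_b)^n\subseteq\calP^n\subseteq\Sigma^{(0)}$. Let $k_0\in\{0,\dots,n-1\}$ be minimal with $(\calP_b)^{\,n-k_0-1}\not\subseteq\Sigma^{(k_0+1)}$; such $k_0$ exists because $b\notin\Sigma^{(n)}$ forces $(\calP_b)^0\not\subseteq\Sigma^{(n)}$. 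By minimality $(\calP_b)^{\,n-k_0}\subseteq\Sigma^{(k_0)}$ (for $k_0=0$ this is just $(\calP_b)^n\subseteq\Sigma^{(0)}$). Choosing $w\in(\calP_b)^{\,n-k_0-1}\setminus\Sigma^{(k_0+1)}$, from $\{w\}\times\calP_b\subseteq(\calP_b)^{\,n-k_0}\subseteq\Sigma^{(k_0)}$ we conclude that $\calP_b$ injects into $\Sigma^{(k_0)}\cap q_{k_0}^{-1}(w)$, which has at most $M^{(k_0+1)}\le N$ points; hence $\#\calP_b\le N$.

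The step requiring the most care, and the conceptual heart of the argument, is the dimension count in the recursion: it is precisely the hypothesis that $\calP^n$ is not Zariski dense which keeps each $\Sigma^{(k)}$ too small to contain a whole fiber of $q_k$ over a general point, so that the construction is well-defined and terminates after $n$ steps. The remaining points — the reduction to the irreducible case and the combinatorial bookkeeping in the last paragraph — are routine; alternatively the whole argument may be organized as an induction on $n$, splitting according to whether $\calP^{n-1}$ is Zariski dense in $C^{n-1}_B$, the recursion above being that induction unrolled.
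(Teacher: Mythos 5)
Your argument is correct: it is a self-contained rendition of exactly the standard ``prolongable points'' descent through the fibered powers, which is the argument of \cite[Lemma 1.1]{CHM} (and its variants in \cite{Aell}, \cite{AM}) that the paper cites in lieu of a proof. The dimension count showing that a proper closed $\Sigma^{(k)}\subsetneq C^{n-k}_B$ cannot contain a whole curve fiber of $q_k$ over a general point, together with the bookkeeping via the minimal $k_0$, matches the cited proof, so nothing further is needed.
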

\begin{proof}
  See \cite[Lemma 1.1]{CHM} , \cite[Lemma 1]{Aell}, or \cite[Lemma 1.1.2]{AM}.
\end{proof}

In view of Lemma \ref{lemma:n-corr}, in order to prove uniformity for stably integral points on curves of openly log general type, we need to prove that the set of stably integral points is $n$-correlated. We start by stating the following lemma, which will be important throughout.

\begin{lemma}  \label{lemma:spread}
  Let $(X,D)$ be a pair defined over $K$ and let $\phi: (X,D) \to (W,E)$ be a dominant morphism. Then, given a proper model $(\calX,\calD)$ over $\calO_{K,S}$, there exists $S' \supset S$ and a model $(\calW,\calE)$ over $\calO_{K,S'}$ such that $\phi$ extends to a map $(\calX,\calD)_{\Spec \calO_{K,S'}} \to (\calW,\calE)$.
\end{lemma}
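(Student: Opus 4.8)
The plan is a routine spreading-out (noetherian approximation) argument. First I would produce an auxiliary model of the target: choose a projective embedding of $W$ together with $E$, spread out the defining equations over $\calO_{K,S}$ and clear denominators, enlarging $S$ by finitely many places if necessary. This yields a flat, finite-type $\calW_0 \to \Spec\calO_{K,S}$ with generic fiber $W$ together with a compatible model $\calE_0$ of $E$; I absorb this preliminary enlargement into $S$.

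Next I would extend the morphism itself. Since $(\calX,\calD)$ is a \emph{proper}, hence finite-type, hence finitely presented model over the noetherian ring $\calO_{K,S}$, and $\Spec K$ is the inverse limit of the affine opens $\Spec\calO_{K,S'}$ over finite sets $S'\supseteq S$, the standard limit formalism for morphisms of finitely presented schemes shows that the $K$-morphism $\phi\colon \calX_K \to (\calW_0)_K$ extends to a morphism $\phi_{S'}\colon \calX_{\Spec\calO_{K,S'}} \to (\calW_0)_{\Spec\calO_{K,S'}}$ for some finite $S'\supseteq S$; I then replace $S$ by $S'$.

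The last step is to make $\phi_{S'}$ compatible with the divisors, i.e.\ to arrange $\supp\phi_{S'}^{-1}(\calE_0)\subseteq\supp(\calD)$ over the integers, so that $\phi_{S'}$ is a morphism of pairs and integral points push forward to integral points. On the generic fiber $\phi$ is a morphism of pairs, so $\phi^{-1}(E)\subseteq D$ there (equivalently $X\setminus D\to W\setminus E$); hence the closed set $T:=\overline{\supp\phi_{S'}^{-1}(\calE_0)\setminus\supp(\calD)}$ of $\calX_{\Spec\calO_{K,S'}}$ misses the generic fiber. Because the model $\calX$ is flat over the Dedekind scheme $\Spec\calO_{K,S'}$, every component of $\calX$ dominates the base, so a closed set meeting no generic point cannot dominate; by properness of $\calX\to\Spec\calO_{K,S'}$ the image of $T$ is then a proper closed, hence finite, set of closed points. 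Enlarging $S'$ to include those places kills $T$, so $\supp\phi_{S'}^{-1}(\calE_0)\subseteq\supp(\calD)$ over $\calO_{K,S'}$; setting $(\calW,\calE):=(\calW_0,\calE_0)_{\Spec\calO_{K,S'}}$ — still a model of $(W,E)$, since flatness is stable under the base change $\Spec\calO_{K,S'}\to\Spec\calO_{K,S}$ — completes the construction. If one additionally wants $\calW$ proper, properness is an open condition holding at the generic point, so the same finite enlargement handles it.

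I do not expect a genuine obstacle: the argument is entirely formal once noetherian approximation is invoked. The points requiring care are that properness of the given model $(\calX,\calD)$ is exactly what licenses both spreading out the morphism and controlling the image of the closed set $T$, and that each of the finitely many conditions imposed along the way (existence of $\phi_{S'}$, the containment of supports, and optionally properness of $\calW$) is open on $\Spec\calO_{K,S}$ and valid at the generic point, so all are met after a single finite enlargement of $S$.
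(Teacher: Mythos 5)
Your argument is correct and is exactly the spreading-out argument that the paper's one-line proof invokes: spread out the target, extend the morphism by the limit formalism for finitely presented schemes, and enlarge $S$ finitely to force compatibility of the divisor models. The paper leaves all of these steps implicit, so your write-up is simply a fleshed-out version of the same approach.
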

\begin{proof}
  The extension property follows from spreading out techniques, noting that the extensions to models of $X$, $W$, $D$ and $E$ are compatible after possibly enlarging $S$.
\end{proof}

Recall that a stable pair (Definition \ref{def:stab_pair}) in dimension one is a projective curve with at worst nodal singularities and a reduced divisor disjoint from the singular locus. Therefore we obtain:

\begin{proposition}
  \label{prop:stably_curves_correl}
  Let $\pi: (C,D) \to B$ be a family of stable pointed curves with smooth general fiber and let $\calP$ be the set of stably integral points of the family. Then the Lang-Vojta Conjecture implies that $\calP$ is $n$-correlated for some $n$ large enough.
\end{proposition}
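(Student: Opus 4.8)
The plan is to derive $n$-correlation from the Fibered Power Theorem \ref{thm:fpt} together with the Lang--Vojta Conjecture \ref{conj:LV}; the heart of the matter is to check that stably integral points are carried to genuine integral points by the morphism produced by the fibered power theorem, with respect to \emph{one} fixed model and \emph{one} fixed finite set of places.

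First I would set up the geometry. After replacing $B$ by a smooth projective model and extending $(C,D)\to B$ to a stable family over it (still with smooth --- hence integral, openly canonical, and log canonical --- general fiber), Theorem \ref{thm:fpt} produces an integer $n>0$, a positive-dimensional pair $(W,\Delta)$ of openly log general type, and a morphism of pairs $g\colon(C^n_B,D_n)\to(W,\Delta)$, meaning $g$ restricts to $C^n_B\setminus D_n\to W\setminus\Delta$, which we may take dominant after replacing $W$ by the closure of its image. Next I would spread everything out over a single $\calO_{K,S'}$ with $S'\supseteq S$: since $B$ is projective it spreads to a $\calB$ that is projective, hence proper, over $\calO_{K,S'}$, and after enlarging $S'$ the classifying map of the family spreads to $\calB\to\overline{\calM}$ (the moduli stack of stable pointed curves), so pulling back the universal family gives a stable family $(\calC,\calD)\to\calB$ over $\calO_{K,S'}$; then I would take its $n$-th fibered power $(\calC^n_B,\calD_n)$ and, by Lemma \ref{lemma:spread}, extend $g$ to $\widetilde g\colon(\calC^n_B,\calD_n)\to(\calW,\calE)$ over $\calO_{K,S'}$, enlarging $S'$ a last time so that $\widetilde g^{-1}(\calE)\subseteq\calD_n$ (this holds on the generic fiber, hence can fail over only finitely many primes).

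The functoriality step is where the argument really happens, and I expect it --- rather than the preceding bookkeeping --- to be the main obstacle. Let $\vec P=(P_1,\dots,P_n)\in\calP^n$, so all $P_i\in\calP$ lie over one point $b\in B(K)$. Since $\calB$ is proper over $\calO_{K,S'}$, the point $b$ extends to a section $\bar b\colon\Spec\calO_{K,S'}\to\calB$, and the fiber $(\calC_{\bar b},\calD_{\bar b})$ is a stable model of $(C_b,D_b)$ over \emph{the same} $\calO_{K,S'}$ --- uniformly in $b$, which is the whole point of routing through the projective base and the moduli map. Since $P_i$ is stably integral it is integral with respect to some stable model of $(C_b,D_b)$ over some $\calO_{L,S_L}$; by uniqueness of stable models that model is the base change of $(\calC_{\bar b},\calD_{\bar b})$, and integrality with respect to a fixed proper model descends along $L/K$, so $P_i\in(\calC_{\bar b}\setminus\calD_{\bar b})(\calO_{K,S'})$. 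Hence $\vec P\in(\calC^n_B\setminus\calD_n)(\calO_{K,S'})$, and since $\widetilde g$ carries the open part into the open part, $g(\vec P)=\widetilde g(\vec P)\in(\calW\setminus\calE)(\calO_{K,S'})$. Thus $g(\calP^n)\subseteq(\calW\setminus\calE)(\calO_{K,S'})$ is a set of $S'$-integral points of $W\setminus\Delta$, which is of openly log general type.

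Finally I would invoke the Lang--Vojta Conjecture \ref{conj:LV}: $(\calW\setminus\calE)(\calO_{K,S'})$ is not Zariski dense in $W$, hence contained in a proper closed $Z\subsetneq W$, so $\calP^n\subseteq g^{-1}(Z)$, a proper closed subset of $C^n_B$ because $g$ is dominant. Therefore $\calP^n$ lies in a proper closed subset of $C^n_B$, i.e.\ $\calP$ is $n$-correlated for the $n$ given by Theorem \ref{thm:fpt}. The genuinely delicate points, as indicated, are the compactification of $B$ together with the extension of the family, and --- most importantly --- organizing the spreading-out so that the stable models of all the fibers $(C_b,D_b)$, and of their fibered powers, live over one fixed $\calO_{K,S'}$; given that, the remainder runs on the valuative criterion of properness and the uniqueness of stable models.
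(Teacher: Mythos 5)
Your proof is correct and follows essentially the same route as the paper's (Fibered Power Theorem, then Lang--Vojta on $(W,\Delta)$, then pulling the exceptional locus back via the spread-out morphism as in Lemma \ref{lemma:spread}); what you add is an explicit justification, via properness of the spread-out base and uniqueness of stable models, of the step the paper compresses into ``Thus $\calP^n \subset F_n$''. One small caution: replacing $W$ by the closure of the image would not in general preserve being of openly log general type, but this is moot since Theorem \ref{thm:fpt} as invoked in the paper already furnishes a \emph{dominant} morphism.
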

\begin{proof}
  Since the family is a stable family with smooth general fiber, we may apply Theorem \ref{thm:fpt} to obtain a positive dimensional pair $(W,E)$ of openly  log general type and a dominant morphism $(C^n,D_n) \to (W,E)$  which restricts to a regular map on the complement $C^n \setm D_n \to W \setm E$. Applying Conjecture \ref{conj:LV} to $(W,E)$ implies that there exists a proper closed subset $V \subset W$ containing all the $(S,E)$-integral points. By Lemma \ref{lemma:spread} there exists a proper closed subset $F_n \subset C^n$ containing all the $(S',D_n)$-integral points. Thus $\calP^n \subset F_n$.
\end{proof}

Using Noetherian induction on the base we can prove the following:

\begin{theorem}
Assume the Lang-Vojta Conjecture. For all irreducible stable pointed curves $(C,D)$ defined over $K$, the number of stably $S$-integral points on $C$ is uniformly bounded.
  \label{th:curves}
\end{theorem}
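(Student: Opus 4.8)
The statement to prove is Theorem~\ref{th:curves}: assuming the Lang-Vojta Conjecture, the number of stably $S$-integral points on a stable pointed curve $(C,D)$ over $K$ is uniformly bounded. The strategy is the one pioneered in \cite{CHM} and adapted to the integral-point setting in \cite{Aell}: combine the $n$-correlation of stably integral points (Proposition~\ref{prop:stably_curves_correl}) with Lemma~\ref{lemma:n-corr}, and then run a Noetherian induction on a parameter space for the curves. First, I would fix the discrete invariants: since $(C,D)$ is a stable pointed curve of some fixed arithmetic genus $g$ with some fixed number $k$ of marked points, it corresponds to a $K$-point of the coarse moduli space $\overline{M}_{g,k}$, which is of finite type over $\Q$. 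It suffices to produce, for each such pair of invariants $(g,k)$, a bound depending only on $K$, $S$, $g$, and $k$.

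The core of the argument is a dévissage over $\overline{M}_{g,k}$ (or a suitable finite cover carrying a universal family, e.g. a level structure or a Hilbert-scheme parameter space). The plan is: let $B$ be an integral, finite-type $K$-variety carrying a family of stable pointed curves $\pi : (C,D) \to B$ whose geometric points hit all isomorphism classes with invariants $(g,k)$. Apply Proposition~\ref{prop:stably_curves_correl} to conclude that the set $\calP$ of stably integral points of the family is $n$-correlated for some $n$; then Lemma~\ref{lemma:n-corr} produces a nonempty open $U \subseteq B$ and an integer $N_U$ such that every fiber over $U$ has at most $N_U$ stably integral points. The complement $B \setminus U$ is a closed subscheme of strictly smaller dimension, which (after decomposing into irreducible components, passing to each component, and possibly stratifying further to restore the universal-family property) we treat by the same argument. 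Since $\dim B$ is finite and each step strictly drops the dimension, the induction terminates after finitely many stages, and the final bound is the maximum of the finitely many $N_U$'s produced along the way. Throughout, one uses Lemma~\ref{lemma:spread} to spread out morphisms to models and to ensure the bad locus is genuinely closed of lower dimension; one must also be slightly careful that the parameter variety is defined over $K$ (or a fixed finite extension absorbed into enlarging $S$) so that the constants depend only on $K$ and $S$.

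One subtlety worth flagging: Lemma~\ref{lemma:n-corr} and Proposition~\ref{prop:stably_curves_correl} are phrased for families over a base $B$ defined over $K$, and at each inductive step the base strata inherit a generically defined universal family rather than a literal one — some components of $B \setminus U$ may fail to carry a family of stable pointed curves over the whole stratum (e.g. because the classifying map is only rational, or the stratum is a non-fine locus of $\overline{M}_{g,k}$). The fix is the standard one: stratify $B \setminus U$ further into locally closed subschemes over which a family does exist, possibly after a finite base extension, and note there are only finitely many such strata by Noetherianity. A second point is that the definition of stably integral (Definition~\ref{def:stab_c}) allows passing to a finite extension $L \supseteq K$ where the stable model lives; one must check that the $n$-correlation argument and the resulting bound are insensitive to this, which holds because Faltings/Vojta finiteness inputs and Lemma~\ref{lemma:n-corr} are geometric in nature and the constant in Lemma~\ref{lemma:n-corr} depends only on the family, not on the field over which points are taken.

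**Main obstacle.** The genuinely nontrivial ingredient is Proposition~\ref{prop:stably_curves_correl}, which in turn rests on the Fibered Power Theorem (Theorem~\ref{thm:fpt}) applied to a family of stable pointed curves — producing a positive-dimensional pair of openly log general type dominated by a high fibered power — together with the invocation of Lang-Vojta on that target. Granting those, the remaining work (the Noetherian descent, the stratification bookkeeping, and the field-of-definition tracking) is routine but must be organized carefully so that the final constant depends only on $(K,S,g,k)$ and not on the individual curve; that bookkeeping is where the proof can go wrong if one is careless, but it presents no conceptual difficulty.
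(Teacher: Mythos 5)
Your proposal is correct and follows essentially the same route as the paper: construct a global family over a Hilbert-scheme-type parameter space $B$ using the uniform pluri-log-canonical embedding guaranteed by stability, apply Proposition~\ref{prop:stably_curves_correl} (fibered power theorem plus Lang--Vojta) to get $n$-correlation, invoke Lemma~\ref{lemma:n-corr} to bound points over an open $U \subseteq B$, and finish by Noetherian induction on $B \setminus U$. The subtleties you flag (re-stratifying the complement so each stratum carries a family, and insensitivity of the bound to finite base extensions in the definition of stably integral points) match the paper's handling.
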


\begin{proof}
  Following \cite{CHM}, we apply Proposition \ref{prop:stably_curves_correl} to a ``global'' family of stable pointed curves as follows: let $g$ be the genus of $C$; by assumption we may assume that $C$ is irreducible. The stability condition implies that $2g -2 + \deg D$ is a positive integer. In particular, there exists $l$, which does not depend on $C$, such that each stable curve $C'$ of genus $g'$ and reduced divisor $D'$ with $g' = g$ and $\deg D' =\deg D$, can be embedded in $\PP^N$ using the linear system $\lvert l K_{C'}(D') \rvert$. The theory of Hilbert Schemes gives the existence of a family $\pi: X \to B$ with $\deg D$ sections defined over $K$ such that given any pair $(C',D')$ as before, there exists a $K$-rational point $b \in B(K)$ such that $X_b \cong (C',D')$. This family can be constructed by taking the closure of the locus of pluri-log canonical curves and its restriction to the universal family in the corresponding Hilbert scheme.
  By construction, the general fiber of $X \to B$ is a smooth curve of openly log general type (in particular it is stable). Therefore, Proposition \ref{prop:stably_curves_correl} applies, so the set of stably $S$-integral points of $\pi: X \to B$ is $n$-correlated for $n$ large enough. By Lemma \ref{lemma:n-corr}, this implies the existence of an open subset $U \subset B$ such that for every $K$-rational $b \in U$, there exists a non-negative integer $N = N(K,S,g,\deg D)$ such that
  \[
    X_b(\calO_{K,S}^{\text{stably}}) \leq N = N(K,S,g,\deg D).
  \]
  Finally one applies Noetherian induction on the dimension of $B \setm U$ to obtain a similar bound for \emph{all} fibers in the family $\pi: X \to B$. More explicitly one defines $B_1$ to be the union of all irreducible components of $B \setm U$ whose generic point is a smooth curve and considers the corresponding restricted family $X_1 \to B_1$. Applying Lemma \ref{lemma:n-corr} to this new family gives the existence of on open set $U_2 \subset B_1$ where the stably integral points of the fibers are uniformly bounded, possibly by a different constant $N_1$. This inductively gives a chain of base schemes $B_i$ such that $\dim B_i < \dim B_{i-1}$ and therefore stabilizes after a finite number of steps. For each $B_i$ one has a uniform bound given by a constant $N_i$ outside an open subset $U_{i+1}$. Taking $N$ to be the maximum of all $N_i$ shows that the stably integral points of \emph{any} fiber are at most $N$. Since the family has been chosen to be global, it follows that such a bound holds for all stable curves, and thus this proves the theorem.
\end{proof}

\section{Good models} \label{sec:models}
In this section we extend the notion of stably integral points to higher dimensions. First we need to construct models that play the role of stable models, since the latter are not known to exist for $\dim \geq 2$. The main idea behind our construction is to fix models for the moduli stack of stable pairs and construct the models of the pairs as base changes of the models of the stacks, noting that models for stacks are completely analogous to those of varieties. The starting point is the following observation of Abramovich-Matsuki that gives a nice moduli interpretation of stably integral points. We let $\overline{\calA}_{g,1}$ denote the universal family over $\overline{\calA}_g$. 
\begin{proposition}[see Proposition  3.1.3 of \cite{AM}]\label{prop:AM}
  Let $(A,\Theta)$ be a PPAV defined over $K$ and let $P \in (A \setm \Theta)(K)$. Consider the associated moduli map
  $
    P_m: \Spec K \to (A,\Theta) \to (\overline{\calA}_{g,1},\mathbf{\Theta}).
  $
 Then $P$ is stably $S$-integral if and only if $P_m$ is an $S$-integral point in $\overline{\calA}_{g,1}\setm \mathbf{\Theta}$.
\end{proposition}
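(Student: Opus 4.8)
The plan is to identify a stable model of $(A,\Theta)$ together with the section $P$ with an extension of the moduli map $P_m$ over a ring of $S$-integers, and then to remove the auxiliary finite base change appearing in the definition of ``stably integral'' by invoking properness of the compactified moduli space.

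The one structural input I would use is the moduli-theoretic description of $(\overline{\calA_{g,1}},\mathbf{\Theta})$: by Alexeev's construction (\cite{Alexeev2}), $\overline{\calA_{g,1}}\to\overline{\calA_g}$ is the universal family of stable semi-abelic pairs, with $\mathbf{\Theta}$ the universal theta divisor, so that over $[A]\in\calA_g$ the fiber is $A$ and $\mathbf{\Theta}$ restricts to $\Theta$. Consequently, for any Dedekind domain $R$, a morphism $\Spec R\to\overline{\calA_{g,1}}$ is the same datum as a morphism $\Spec R\to\overline{\calA_g}$ -- i.e.\ a stable semi-abelic degeneration $(\calC,\calD)\to\Spec R$ of $(A,\Theta)$ -- together with a section of $\calC\to\Spec R$, and $\mathbf{\Theta}$ pulls back to $\calD$ under the classifying map. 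This is where the N\'eron model and Grothendieck's semistable reduction theorem for abelian varieties enter: after a finite extension $L/K$ the classifying map $\Spec L\to\overline{\calA_g}$ extends uniquely over $\calO_{L,S_L}$, and its pullback is the (unique, by separatedness) stable model $(\calC,\calD)$ of $(A,\Theta)$ over $\calO_{L,S_L}$ figuring in Definition \ref{def:stab_c}.

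Granting this dictionary, the equivalence is a matter of unwinding definitions. By definition $P$ is stably $(S,\Theta)$-integral iff for some finite $L/K$ the point $P$ extends, on the stable model $(\calC,\calD)$ over $\calO_{L,S_L}$, to a section with image in $\calC\setm\calD$. By the dictionary this section, together with $(\calC,\calD)$, is exactly an extension $\widetilde{P_m}\colon\Spec\calO_{L,S_L}\to\overline{\calA_{g,1}}$ of $P_m$, and ``the section avoids $\calD$'' is literally ``$\widetilde{P_m}$ avoids $\mathbf{\Theta}$'' since $\mathbf{\Theta}$ restricts to $\calD$ on $\calC$. Thus $P$ is stably $S$-integral iff $P_m$ admits an $(S_L,\mathbf{\Theta})$-integral extension over some $\calO_{L,S_L}$. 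To drop the base change: $\overline{\calA_{g,1}}$ is proper over $\Z$, so $P_m\colon\Spec K\to\overline{\calA_{g,1}}$ already extends to a section over $\calO_{K,S}$; by separatedness this extension is unique and base-changes to $\widetilde{P_m}$ over $\calO_{L,S_L}$. Since a point of $\overline{\calA_{g,1}}$ lies on $\mathbf{\Theta}$ iff its image under base change does, $\widetilde{P_m}$ avoids $\mathbf{\Theta}$ at the places over $S$ iff the $\calO_{K,S}$-extension of $P_m$ avoids $\mathbf{\Theta}$, i.e.\ iff $P_m$ is an $S$-integral point of $\overline{\calA_{g,1}}\setm\mathbf{\Theta}$. (One works with the coarse space, or permits the harmless base change trivializing stabilizers, to make sense of ``section'' for the stack.)

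The main obstacle is the first half: justifying that the stable models of a PPAV allowed in Definition \ref{def:stab_c} are precisely the semi-abelic degenerations classified by $\overline{\calA_g}$, that the extension of the section $P$ corresponds to the valuative extension of $P_m$ along $\overline{\calA_{g,1}}$, and that the relevant divisor $\calD$ really is the restriction of $\mathbf{\Theta}$ -- including over the boundary, where $(\calC,\calD)$ is a degenerate semi-abelic pair rather than an abelian variety. Once these compatibilities are in place, the remaining step -- eliminating the finite base change via properness and separatedness of $\overline{\calA_{g,1}}$ -- is routine.
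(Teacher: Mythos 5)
Your argument is correct and is essentially the one the paper relies on: the paper does not prove this proposition but cites it directly from \cite{AM}, and your reconstruction---identifying $\overline{\calA_{g,1}}\to\overline{\calA_g}$ as the universal family of semi-abelic pairs with universal theta divisor, so that a stable model of $(A,\Theta)$ together with the closure of $P$ is precisely the valuative extension of $P_m$, and then removing the auxiliary extension $L/K$ by properness/separatedness and compatibility of $\mathbf{\Theta}$ with base change---is exactly the mechanism of \cite[Proposition 3.1.3]{AM}. It is also precisely the dictionary the paper later axiomatizes in its notion of good models (Definition \ref{def:good} and Proposition \ref{prop:ms-stacks}), and the stack-versus-coarse-space caveat you flag is the only point needing care; you handle it adequately.
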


This implies that one way to characterize stably $S$-integral points is to look at their image in an appropriate moduli space and test their integrality with respect to a model of such a moduli stack.

Contrary to the moduli space of PPAVs, the moduli space of stable pairs has not been defined over $\Spec \mathcal{O}_{K,S}$. We remedy this by fixing a model over a Dedekind domain and show that our results are independent of the choice of such a model.

\subsection{Construction of good models}
By Appendix \ref{sec:stacks} the moduli stack of stable pairs can be defined as a Deligne-Mumford stack over $\Q$, with projective coarse moduli space. We now make a choice of models of such stacks as follows: choose a ring of integers $\fA$ of a number field $K_\fA$ and models of  $\calM_\Gamma,\calU$ and $\calD$ over $\fA$ such that in the following diagram
\[ 
  \begin{tikzcd} (\calU,\calD) \ar[r]^{} \ar[d]_{} & (\fU,\fD) \ar[d]^{} \\ 
  \calM_\Gamma \ar[r]^{} \ar[d] & \fM \ar[d] \\
   \Spec K_\fA \ar[r]^{} & \Spec \fA \end{tikzcd}
\] 
$\fU,\fD$ and $\fM$ are proper stacks over $\fA$ that are models for $\calU,\calD$ and $\calM_{\Gamma}$ over $\fA$. Note that the existence of the diagram follows from Section \ref{sec:stacks} and the definition of models. We can define models for stable pairs with respect to the choices of the models and of $\fA$ as follows:

\begin{definition}
\label{def:good}
 Given a stable pair $(X,D)$ defined over $K$, let $(\calX, \calD)$ be any model over $\Spec\calO_{K,S}$. We say that $(\calX, \calD)$ is a \emph{good model} (with respect to the choices of the moduli stacks, the ring of integers $\fA$ and the models of the stacks) if there exists a number field $L \supset K$ such that ${\calO_{L,S_L}} \supset \fA$, where $S_L$ is the set of places lying over $S$, and
      \[
	(\calX_{\calO_{L,S_L}}, \hspace{1ex} \calD_{\calO_{L,S_L}}) \isom (\Spec {\calO_{L,S_L}} \times_{\fM} \fU, \hspace{1ex} \Spec {\calO_{L,S_L}} \times_{\fM} \fD),
      \]
      where $\calX_{\calO_{L,S_L}}$ and $\calD_{\calO_{L,S_L}}$ are the base change through $\Spec {\calO_{L,S_L}} \to \Spec \calO_{K,S}$. We say $(\calX,\calD)$ is defined over $\calO_{L,S_L}$ if there is a number field $L$ such that the above isomorphism holds.
\end{definition}

Good models play the role of stable models in dimension one and are the key ingredient in the definition of moduli stably integral points for stable pairs.

\begin{definition}
\label{def:ms}
 Let $(X,D)$ be a stable pair over $K$. A rational point $P$ is \emph{moduli stably $S$-integral} ($ms$-integral for short when the reference to $S$ is clear), if there exists a finite extension $L \supset K$ and a good model $(\calX, \calD)$ over $\Spec \calO_{L,S_L}$ such that the image of the map:
  \[
    \begin{tikzcd} P: \Spec L \ar[r] &(X_L, D_L) \ar[r] \ar[d] & (\calX_{\calO_{L,S_L}}, \hspace{1ex} \calD_{\calO_{L,S_L}}) \ar[d] \\
    & \Spec L \ar[r] & \Spec \calO_{L,S_L} 
 \end{tikzcd}
  \]
  is $(S_L,D_L)$-integral in the good model $(\calX_{\calO_{L,S_L}}, \hspace{1ex} \calD_{\calO_{L,S_L}})$. We will denote the set of all moduli stably $S$-integral points of $(X,D)$ as $X(\calO_{K,S}^{\text{ms}})$.
\end{definition}

\begin{remark} \label{remark:models}For any two choices of models of the stacks over two different rings $\fA$ and $\fA'$, we can always find a ring of integers $\fB$ containing $\fA$ and $\fA'$  such that the base change of any of the two models will define a model over $\fB$ and any $ms$-integral point with respect to $\fA$ and $\fA'$ is integral with respect to $\fB$. In particular the results of the following sections of this paper do not depend on the choice of the model, up to extending the ring of integers that we consider, and possibly adjusting the constants involved (see e.g. \cite[Appendix B]{Rydh}).
\end{remark}
The notion of $ms$-integral point extends the notion of stably integral points both for curves and PPAVs. In fact, the following more general observation holds.

\begin{proposition}
  \label{prop:ms-stacks}
  Let $\fX$ be a proper Deligne-Mumford stack representing a functor of stable pairs and admitting a universal family $\fU$ and universal divisor $\fD$ such that $\fX$ and $(\fU,\fD)$ have models over a ring of integers $\fA$. Suppose that  $\calO_{K,S} \supset \fA$, and let $(X,D)$ be a pair defined over $K$ which admits the following diagram
  \[
    \begin{tikzcd} (X,D) \ar[d] \ar[r] & (\calX,\calD) \ar[r] \ar[d] & (\fU,\fD) \ar[d] \\ \Spec K \ar[r]  & \Spec \calO_K \ar[r] & \fX \end{tikzcd}
  \]
  where $(\calX,\calD) \isom (\Spec \calO_K \times_\fX \fU, \hspace{1ex} \Spec \calO_K \times_\fX \fD)$ is a good model. If $P \in (X\setm D)(K)$ is an $ms$-integral point of $X$, then the image of $P$ in $(\fU,\fD)$ is $(\fD,\calO_{K,S})$-integral.
\end{proposition}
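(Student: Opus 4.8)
The plan is to reduce the statement to two formal facts: that $ms$-integrality can be detected on \emph{any} good model (up to enlarging $S$ and $K$), and that a good model is by construction a base change of $(\fU,\fD)$, so that integrality against its boundary coincides with integrality against $\fD$.

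First I would unwind Definition~\ref{def:ms}: $ms$-integrality of $P\in(X\setm D)(K)$ supplies a finite extension $L\supset K$ with $\calO_{L,S_L}\supset\fA$ and a good model of $(X,D)$ over $\calO_{L,S_L}$ in which $P$ extends to a section whose pullback of the boundary divisor is supported inside $S_L$. I then want to transport this to the good model $(\calX,\calD)\isom(\Spec\calO_K\times_\fX\fU,\ \Spec\calO_K\times_\fX\fD)$ from the diagram. Since $\fX$ is a proper, hence separated, Deligne--Mumford stack over $\Spec\fA$ and $\calO_{K,S}\supset\fA$, the moduli map $\Spec K\to\fX$ attached to $(X,D)$ extends uniquely over $\Spec\calO_{K,S}$, and the same over $\Spec\calO_{L,S_L}$; hence, after base change to a common ring of $S$-integers of a common finite extension --- where one uses Remark~\ref{remark:models} to reconcile the two sets of auxiliary choices --- the two good models are both identified with $\Spec R\times_\fX\fU$ together with the pullback of $\fD$, and the identification is compatible with the boundary divisors and with the common generic fiber. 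Pushing the section above through this identification yields a section $\Spec\calO_{L,S_L}\to\calX_{\calO_{L,S_L}}$ extending $P$ with pullback of $\calD_{\calO_{L,S_L}}$ supported in $S_L$.

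Next I would compose this section with the projection $p\colon\calX\to\fU$. As $\calD=p^*\fD$ by construction, the composite $\Spec\calO_{L,S_L}\to\calX_{\calO_{L,S_L}}\xrightarrow{p}\fU$ is a section lying over $\Spec\calO_{L,S_L}\to\fX$, it restricts on the generic point to the image $Q$ of $P$ in $\fU$, and its pullback of $\fD$ equals the pullback of $\calD_{\calO_{L,S_L}}$, hence is supported in $S_L$; so $Q$ is $(\fD,\calO_{L,S_L})$-integral. To obtain the claim over $K$ itself I would apply the valuative criterion of properness to $\fU\to\fX$: the $K$-point $Q$, together with the already-constructed map $\Spec\calO_{K,S}\to\fX$, determines a unique extension $\widetilde Q\colon\Spec\calO_{K,S}\to\fU$ (possibly after a harmless finite extension, as in Definition~\ref{def:ms}); by uniqueness its base change to $\calO_{L,S_L}$ is the section just built, and since any prime of $\calO_{L,S_L}$ lying over a prime of $\calO_{K,S}$ outside $S$ is itself outside $S_L$, the support of $\widetilde Q^*\fD$ is forced into $S$. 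That is exactly the statement that the image of $P$ in $(\fU,\fD)$ is $(\fD,\calO_{K,S})$-integral.

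I expect the only real difficulty to be bookkeeping: one must compare the good model produced by Definition~\ref{def:ms} with the one in the diagram over a \emph{common} Dedekind base, and check that the attendant enlargements of $K$ and $S$ neither create nor destroy primes in the support of the divisor. Everything else is formal, relying only on properness and separatedness of $\fX\to\Spec\fA$ and of $\fU\to\fX$ --- which give existence and uniqueness of extensions of moduli maps into $\fX$ and of lifts along $\fU\to\fX$ over the Dedekind rings in play --- together with the defining property of a good model as a fiber product with the universal family.
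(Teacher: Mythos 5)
Your proposal is correct and follows essentially the same route as the paper: since a good model is by definition the base change $\Spec \calO_K \times_\fX \fU$ of the universal family, an integral section of $(\calX,\calD)$ composes with the projection to $(\fU,\fD)$ to give an integral point there. The paper's proof is just this two-line observation; your additional bookkeeping (reconciling the good model from Definition~\ref{def:ms} with the one in the diagram, and descending from $\calO_{L,S_L}$ back to $\calO_{K,S}$ via surjectivity of $\Spec\calO_{L,S_L}\to\Spec\calO_{K,S}$) correctly fills in details the paper elides.
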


\begin{proof} 
  $P$ is $ms$-integral if it is integral in $(\calX,\calD)$, which is the base change of the universal family to the ring of integers $\calO_{K,S}$. This implies that the image of $P$ is integral in $(\fU,\fD)$.
\end{proof}

Thus $ms$-integral points agree with the corresponding notion in \cite{Aell} and \cite{AM}. 
\begin{remark}
We see that moduli stably integral points in dimension one for the moduli functor of stable curves, $\fA = \Z$ and $\calM_{\Gamma}$ the stack $\overline{\calM}_{g,n}$, are stably integral points for curves. In fact, given a stable curve $\calC$ over $K$, if $L \supset K$ is the finite extension where $\calC$ acquires stable reduction then 
  \[
    \Spec \calO_{L,S_L} \times_{\calM_{g,n}} \fU
  \]
  is a stable model for $\calC$ and a good model according to Definition \ref{def:good}. Therefore moduli stably integral points for these choices coincide with stably integral points for curves.
  The same holds for the moduli functor of principally polarized quasi-abelian schemes (or abelic pairs \'{a} la Alexeev) \cite{Alexeev2} and $\fA = \Z$. In this case a ``good'' model is a stable quasi-abelian model and $ms$-integral points are stably integral points of \cite{AM}. 
\end{remark}

We stress that whenever the existence of a moduli stack of stable pairs over any finitely generated $\Z$ algebra with modular interpretation is known, the definitions above (and the consequential results to follow) will, in particular, be valid for this choice of model. Moreover, up to enlarging $S$, the model of the universal family $(\fU,\fD) \to \fM$ will be a stable family for an appropriate definition of singularities for fibers with residue field of characteristic $p>0$.

In Appendix \ref{app} we present an alternative approach that, although leading to weaker results, does \emph{not} depend on the existence of such a model used in this section.

\section{Uniform bound for the degree of the subscheme containing $ms$-integral points}\label{sec:deg_bound}
Given the definition of $ms$-integral points (Definition \ref{def:ms}), one could ask what kind of uniformity statements can follow from the Lang-Vojta Conjecture. In this section, we prove that for a stable family with nice singularities, $ms$-stably integral points lie on a subscheme whose degree is uniformly bounded. The following is inspired by Hassett \cite{Hassett}.

\begin{theorem}
  \label{th:deg_bound}
Assume the Lang-Vojta Conjecture. Let $(X,D)\to B$ be a stable family defined over $K$ with integral, openly canonical, and log canonical generic fiber over a smooth projective variety $B$. There exists an open set $U \subset B$ such that for all rational points $b$ in $U(K)$, there exists a proper subscheme $A_b$ containing all $ms$-integral points of $X_b$ such that, if $N(b)$ is the sum of the degrees (in a suitable projective embedding) of the components of $A_b$, then $N(b)$ is uniformly bounded.
\end{theorem}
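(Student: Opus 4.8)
The plan is to follow the strategy of \cite{CHM} and \cite{Hassett}, adapted to the setting of stable pairs and $ms$-integral points. First I would apply the Fibered Power Theorem for pairs (Theorem \ref{thm:fpt}): since $(X,D) \to B$ is a stable family whose general fiber is integral, openly canonical, and log canonical over a smooth projective $B$, there is an integer $n > 0$, a positive-dimensional pair $(W,\Delta)$ of openly log general type, and a morphism $(X^n_B, D_n) \to (W,\Delta)$ which is regular on the complements. Next I would spread this morphism out over a ring of $S$-integers using Lemma \ref{lemma:spread}, obtaining compatible models after possibly enlarging $S$, so that the morphism $X^n_B \setm D_n \to W \setm \Delta$ extends to the integral models.

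The second step is to transfer the integrality. An $ms$-integral point $P_b \in (X_b \setm D_b)(K)$ gives, for each choice of $n$-tuple of such points, a point of $X^n_B \setm D_n$; I would check (using the definition of $ms$-integral points via good models, Definition \ref{def:ms}, together with the compatibility of the fibered-power model with the model of the universal family) that the resulting points of $X^n_B$ are $(S', D_n)$-integral in the appropriate model. Pushing forward along the spread-out morphism, their images in $W \setm \Delta$ are $(S', \Delta)$-integral points of $(W,\Delta)$. Since $(W,\Delta)$ is of openly log general type, the Lang-Vojta Conjecture \ref{conj:LV} says these are not Zariski dense in $W$; pulling back the proper closed subset of $W$ containing them, we get a proper closed subscheme $F_n \subsetneq X^n_B$ containing all $n$-tuples of $ms$-integral points coming from fibers over a dense open $U \subset B$. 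In other words, the set of $ms$-integral points of the family is $n$-correlated in the sense of Definition \ref{def:correl}.

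The third step is to extract from the $n$-correlation a uniform bound on the degree of a subscheme in each fiber. Over a suitable open $U \subset B$ where $F_n$ behaves nicely in families, I would consider, for $b \in U(K)$, the fiber $(F_n)_b \subsetneq (X^n_B)_b = X_b^n$. By projecting $(F_n)_b$ to successive factors and taking a coordinate projection to one factor of $X_b$, one produces — exactly as in \cite[Theorem 6.2]{Hassett} — a proper closed subscheme $A_b \subsetneq X_b$ that contains all $ms$-integral points of $X_b$: the point is that if the $ms$-integral points were Zariski dense in $X_b$, then their $n$-fold fibered power would be dense in $X_b^n$, contradicting containment in $(F_n)_b$. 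Since $F_n$ is a fixed subscheme of the (fixed over $U$) family $X^n_B$, flatness of an appropriate relative Hilbert scheme and constructibility bound the Hilbert polynomial — hence the degree — of $(F_n)_b$, and therefore of $A_b$, uniformly over $b \in U(K)$; summing the degrees of the components gives the uniformly bounded quantity $N(b)$.

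The main obstacle I expect is the bookkeeping in the second step: making sure that the good-model definition of $ms$-integral points (Definition \ref{def:ms}), which is phrased via base change from a fixed model of the moduli stack, is genuinely compatible with the ad hoc integral model of $X^n_B$ produced by spreading out the fibered-power construction, so that $ms$-integrality of a point really does yield honest $(S',D_n)$-integrality in the spread-out model — this requires care about field extensions $L \supset K$, enlarging $S$, and the independence-of-model remark (Remark \ref{remark:models}). The geometric input (fibered power theorem, Lang-Vojta, Hassett's degree argument) is essentially black-boxed; the delicate part is the arithmetic dictionary between the two notions of integral model.
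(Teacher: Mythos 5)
Your proposal matches the paper's proof: both apply the Fibered Power Theorem to get a dominant map $(X_B^n,D_n)\to(W,\Delta)$ with $(W,\Delta)$ of openly log general type, invoke Lang--Vojta on $(W,\Delta)$ and pull back (via the spreading-out lemma and the compatibility of $ms$-integrality with fiber powers) to obtain a proper closed $Z_n\subsetneq X_B^n$ containing all $n$-tuples of $ms$-integral points, and then run the Hassett-style descending induction on the fibered powers to extract, for each $b$ in a dense open $U$, a proper subscheme $A_b\subset X_b$ of uniformly bounded degree. The only difference is presentational: the paper spells out the inductive construction of the subvarieties $Z_j\subset X_B^j$ and the minimal-index argument that you black-box by citing \cite[Theorem 6.2]{Hassett}, and it disposes of your flagged model-compatibility worry with the observation that $(X_B^n,D_n)$ is itself a stable pair so the definition of $ms$-integral points is compatible with fiber powers.
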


\begin{proof}
By Theorem \ref{thm:fpt}  there exists a positive integer $n$ and a positive dimensional pair $(W, \Delta)$ of openly  log general type and a dominant morphism $(X_B^n, D_n) \to (W,\Delta)$, which induces a regular map $X_B^n \setm D_n \to W\setm \Delta$. Assuming the Lang-Vojta Conjecture for any model of $(W,\Delta)$, there exists a proper closed subvariety of $W \setm \Delta$ containing all the $(\Delta,S)$-integral points.
    
 Define $Z_n$ to be the preimage of this subvariety, which by definition and by Lemma \ref{lemma:spread} contains all $S$-integral points of $X_B^n$ and therefore the $ms$-integral points of the fibered power (note that by Proposition 4.5 of \cite{fpt}, the pair $(X_B^n,D_n)$ is a stable pair and by construction, the definition of $ms$-integral points are compatible with taking fiber powers). Define, by induction, closed subvarieties of $X_B^j$ for each $1\leq j \leq n$ in the following way:

 \begin{enumerate}
      \item For each $j$, denote by $\pi_j : X_B^j \to X_B^{j-1}$ and by $\pi_{ij} : X_B^j \to X_B^i$ the projection morphisms;
      \item For each $k=1,\dots,j$,  denote by $\pi_{j,1}^k : X_B^j \to X$ the $k$-th projection;
      \item For each $j$, denote by $Z_j$ the maximal closed subset of $X_B^j \setm D^j$ such that 
	\[
	  \pi_{nj}^{-1}(Z_j)\setm \sum_{k>j} \pi_{n,1}^{k*}D \subset Z_n;
	\]
      \item For each $j$, denote by $U_j$ the complement of $Z_j$ in $X_B^j \setm D_j$. By construction, $U_n$ does not contain any $ms$-integral points. Moreover, by maximality of $Z_j$ for each $j$, the preimage $\pi_j^{-1}(u)$ is not contained in $Z_j$ for every $u \in U_{j-1}$.
 \end{enumerate}
 Note that by definition, for $k \leq j -1$ we have that  
\[
\pi_{j-1,1}^k \circ \pi_j = \pi_{j,1}^k,
\]
and for each $j$ we have that
\[
  D_j = \pi_j^* D_{j-1} + (\pi_{j,1}^{j})^* D.
\]

 Then one has that 
\[
  \pi_j^{-1}(Z_{j-1}) \setm (\pi_{j,1}^{j})^* D \subset Z_j.
\]
 For every $u \in U_{j-1}$, its inverse image in $X_B^j$ intersects $Z_j$ in a proper subvariety (since $\pi^{-1}_j(u)$ is not contained in $Z_j$). Call $A_j$ this subvariety (which might not be of pure dimension) and let $d_j = \sum \deg(A_j)$, in a suitable projective embedding. Let $N = \max_j d_j$ and $b$ be a $K$-rational point of $B$, then we claim $X_b(\calO_{K,S}^{\text{ms}})$ lies in a subvariety of degree $\leq N$.
 To prove the claim we define an index $\hat{\jmath}$ as
\[
  \hat{\jmath} = \min \{ j: U_j \cap X_b^j(\calO_{K,S}^{\text{ms}}) = \emptyset\}.
\]
Note that by construction $\hat{\jmath} \leq n-1$. Now pick a rational point $u$ in $U_{\hat{\jmath}}$ which lies in $X_b^{\hat{\jmath}}(\calO_{K,S}^{\text{ms}})$; this in particular implies that $\pi_{\hat{\jmath}}^{-1} (u) = X_b$. By the above discussion $\pi_{\hat{\jmath}}^{-1} (u) (\calO_{K,S}^{\text{m}}) \subset A_{\hat{\jmath}}=: A_b$, which is a subvariety of degree $d_{\hat{\jmath}} \leq N$. This proves the claim, and thus the theorem.
\end{proof}

\begin{corollary}\label{cor:subscheme}
  Assume the Lang-Vojta Conjecture. In the notation above, for every fiber $X_b$ the set of $ms$-integral points $X_b(\calO_{K,S}^{ms})$ is contained in a proper subscheme $A_b$. Moreover, the sum of the degrees of the irreducible components of $A_b$ are uniformly bounded. 
\end{corollary}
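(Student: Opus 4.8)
The plan is to bootstrap Theorem \ref{th:deg_bound} from a dense open subset of $B$ to all of $B$ by Noetherian induction on $\dim B$, in exact parallel with the way the proof of Theorem \ref{th:curves} upgrades its conclusion from an open set to the whole base. Theorem \ref{th:deg_bound} already supplies a nonempty open $U \subset B$ and a constant $N_U$ such that for every $b \in U(K)$ the set $X_b(\calO_{K,S}^{ms})$ lies in a proper subscheme $A_b \subset X_b$ the sum of whose component degrees --- computed against a fixed projective embedding as in Theorem \ref{th:deg_bound} --- is at most $N_U$. What remains is to bound the $ms$-integral points of the fibers over the proper closed set $B \setminus U$.

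First I would decompose $B \setminus U$ into its finitely many irreducible components, and for each component $Z$ pass to a resolution of singularities followed by a smooth projective compactification $\overline{Z}$. Restricting the stable family to $Z$ and extending over $\overline{Z}$ (taking the component dominating $\overline{Z}$ and then shrinking to the locus where the family is again stable in the sense of Definition \ref{def:stab_pair}), one obtains, on each component where the generic fiber is still integral, openly canonical and log canonical, a stable family over a smooth projective base to which Theorem \ref{th:deg_bound} applies; this produces a dense open $U_Z \subset \overline{Z}$ and a constant $N_Z$ with the stated degree bound over $U_Z(K)$. Since $\dim \overline{Z} < \dim B$, recursing on $\overline{Z} \setminus U_Z$ yields a strictly decreasing chain of dimensions, hence terminates, and the resulting finite stratification exhausts $B(K)$. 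Setting $N = \max\{N_U, N_Z, \dots\}$ over the finitely many constants produced gives a single uniform bound valid for all $b \in B(K)$.

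The point that makes the induction legitimate is that being an $ms$-integral point of $(X_b, D_b)$ is intrinsic to the pair: by Definition \ref{def:ms} it is tested against a good model, which depends only on the moduli map $\Spec K \to \fM$ determined by the isomorphism class of $(X_b, D_b)$ and not on the family realizing it as a fiber (cf. Proposition \ref{prop:ms-stacks} and Remark \ref{remark:models}); so $X_b(\calO_{K,S}^{ms})$ is the same whether computed in the original family or in the restricted family over $\overline{Z}$. I expect the main obstacle to be exactly the bookkeeping in this reduction: checking that after stratifying $B \setminus U$, resolving, and compactifying, the restricted families can be arranged to satisfy the hypotheses of Theorem \ref{th:deg_bound} (stable family; smooth projective base; integral, openly canonical, log canonical general fiber) on each piece, handling the strata whose general fiber fails these conditions in the same way as the corresponding step in the proof of Theorem \ref{th:curves}, and confirming that only finitely many auxiliary families occur so that their degree constants can be amalgamated into a single $N$. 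No new geometric input beyond Theorem \ref{th:deg_bound} is needed.
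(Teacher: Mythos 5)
Your proposal is correct and follows essentially the same route as the paper: Noetherian induction on the base, repeatedly applying Theorem \ref{th:deg_bound} to the restricted family over the components of the complement whose generic fiber still satisfies the hypotheses, and taking the maximum of the finitely many resulting constants. Your version is somewhat more careful than the paper's (which is terse about arranging a smooth projective base on each stratum and about the intrinsic nature of $ms$-integrality), but the underlying argument is the same.
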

\begin{proof}
 For the family $(X,D) \to B$, Theorem \ref{th:deg_bound} gives a uniform bound $N_0$ for the degree of the subscheme $A_b$ in all fibers $X_b$ with $b$ in an open subset of $B$. Consider the restricted family $(X_1,D_1) \to B_1$ where $B_1$ is the union of the irreducible components of $B$ with integral, openly canonical, and log canonical generic fiber. Applying Theorem \ref{th:deg_bound} gives a bound $N_1$ for $A_b$ in all fibers with $b$ in an open subset of $B_1$. Iterating this, we find integers $N_0,\dots,N_m$ (finitely many by being Noetherian), such that for every fiber $X_b$, the sum of the degrees of the irreducible components of the proper subscheme $A_b$ is bounded by the max of the $N_i$.
\end{proof}


Given the degree bound obtained in Theorem \ref{th:deg_bound} one would like to conclude, assuming the Lang-Vojta Conjecture, that $ms$-integral points in a stable pair satisfy some uniformity. However, one cannot expect a result as strong as Theorem \ref{th:curves}, since a stable pair can contain curves with negative Euler characteristic (and thus contain infinitely many $ms$-integral points).  Even if we are in a case where the $ms$-integral points are finite, there is still another problem to be tackled, namely the \emph{subvariety property} mentioned in the introduction.

\begin{definition}\label{def:subvarietycondition} Let $(X,D)$ be a stable pair over $K$, and let $P$ be an $ms$-integral point. We say that $P$ satisfies the \emph{subvariety property} if for all irreducible subvarieties $Y$ containing $P$ but not contained in $D$, we have that $P$ is an $ms$-integral point for $(Y,D_Y)$ with $D_Y = Y \cap D$. We say that $ms$-integral points satisfy the \emph{subvariety property} if every $ms$-integral point of $(X,D)$ does. \end{definition}

To prove that $P$ is $ms$-integral for $(Y,D_Y)$, we have to exhibit a good model of the pair, possibly after extending  $K$. If $(Y,D_Y)$ is stable, we can construct a good model (up to extending $K$) using the moduli map (Definition \ref{def:good}), but there is a priori no well defined map between the models of $X$ and $Y$.  Since we do not have explicit models for $X$ and $Y$, we instead consider a stronger geometric condition that would imply the subvariety property. 

As we saw in Section \ref{sec:log_ct} (e.g. Theorem \ref{thm:ampleness}), a natural way to guarantee some sort of hyperbolicity is to assume that the log cotangent sheaf is almost ample. It turns out that there is still work to do -- assuming the log cotangent sheaf is almost ample is only enough to guarantee the subvariety property is true \emph{outside the double locus} of an slc pair. 

Nevertheless, in the following sections, we will show for surfaces that $ms$-integral points behave well for subvarieties of pairs with almost ample log cotangent sheaf. To obtain uniformity results we need to show that these positivity conditions can be defined at the level of the moduli stack. More precisely, we prove the existence of an almost ample log cotangent sheaf on the universal family over a stratification of the moduli space of stable pairs. This is carried out in Appendix \ref{app:sheaves}.

\section{$ms$-integral points and subvarieties}\label{sec:subv}
In Section \ref{sec:deg_bound} we showed that $ms$-integral points lie in a subscheme whose degree is uniformly bounded (see Theorem \ref{th:deg_bound}). To conclude uniformity, one would hope to use an induction argument, by proving that $ms$-integral points that lie on a curve are stably integral for that curve, and then apply Theorem \ref{th:curves}. The purpose of this section is to show that the only obstruction to such an argument holding is the existence of contractible components of curves (see Proposition \ref{prop:no_contract}). We will then use this result in Section \ref{sec:descent} to show that the \emph{subvariety property} does hold under a positivity assumption on the log cotangent sheaf.

Let $(X,D)$ be a stable pair of dimension two and let $Y$ be an irreducible subvariety such that $Y \not\subset D$. We begin by studying the behaviour of $ms$-integral points lying on $Y$ when the pair $(Y,D_Y)$ is stable where $D_Y = (D \cap Y)$; in particular $D_Y$ is a reduced divisor.
 We want to show that an $ms$-integral point $P$ that lies in $Y$ is a stably integral point for $(Y,D_Y)$. This amounts to exhibiting a stable model $(\calY^s,\calD_Y^s)$ of $(Y,D)$ where $P$ is integral. Since $P$ is $ms$-integral in $(X,D)$ we are given a natural model of the two dimensional pair --  the good model $(\calX,\calD)$ where $P$ is integral. This defines a proper model of $(Y,D)$ as follows:
 
 \begin{definition}\label{def:inducedmodel}
   Let $(\calX,\calD)$ be a good model of a log canonical surface pair $(X,D)$ and let $Y \subset X$ be a proper irreducible curve. If $(Y,D_Y)$ is a stable pair, where $D_Y = (D \cap Y)$, then we call the closure of $(Y,D)$ in $(\calX,\calD)$ the \emph{induced model} of $Y$ inside $(\calX,\calD)$ and we denote it by $(\calY,\calD_Y)$.
 \end{definition}

 The induced model $(\calY,\calD)$ has stable generic fiber but in general might not be semi-stable itself! However the stable reduction theorem gives maps 
\[
  \begin{tikzcd}(\calY,\calD_Y) & (\calY^{ss},\calD_Y^{ss}) \ar[r]^{\phi} \ar[l] & (\calY^s,\calD_Y^s)\end{tikzcd}
\]
where $(\calY^{ss},\calD_Y^{ss})$ is the semistable reduction of $(\calY,\calD_Y)$, and all the maps are defined possibly in some finite extension of the base rings. Since the semistable reduction map is a composition of blow-ups, normalization, and base change under ramified covers we obtain the following.

\begin{lemma}
\label{lem:ss}
Let $(X,D)$ be a log canonical stable surface pair defined over $K$, and let $P$ be an $ms$-integral point with respect to a good model $(\calX,\calD)$ over $\Spec \calO_{K,S}$. Let $Y$ be an irreducible subvariety not contained in $D$ such that $(Y,D_Y)$ is a stable pointed curve. Then if $P \in Y$, (the image of) $P$ is integral in the semistable model $(\calY^{ss},\calD_Y^{ss})$ of the induced model $(\calY,\calD_Y)$.
\end{lemma}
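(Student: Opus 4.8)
The plan is to transport the section that witnesses the $ms$-integrality of $P$ down into the induced model $(\calY,\calD_Y)$, lift it through the semistable reduction morphism by the valuative criterion of properness, and then observe that integrality along the divisor survives because the divisorial part of the semistable model is a strict transform of $\calD_Y$.

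First I would unwind Definition \ref{def:ms}: after replacing $K$ by a finite extension $L$, the point $P$ is integral in the good model $(\calX,\calD)$ over $\calO_{L,S_L}$, i.e. there is a section $P\colon\Spec\calO_{L,S_L}\to\calX$ with $\supp P^{*}\calD\subseteq S_L$. Since $P$ lies in $Y$, the image of this section --- the closure in $\calX$ of the point of $X$ it defines --- is contained in the closure $\calY$ of $Y$ in $\calX$. Hence $P$ factors as a section $\Spec\calO_{L,S_L}\to\calY$ of the induced model of Definition \ref{def:inducedmodel}, and because $\calD_Y=\overline{D\cap Y}\subseteq\calD$ we get $\supp P^{*}\calD_Y\subseteq\supp P^{*}\calD\subseteq S_L$. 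Thus $P$ is an $(S_L,D_Y)$-integral point of the flat proper model $(\calY,\calD_Y)$ of the stable pointed curve $(Y,D_Y)$.

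Next I would apply the stable reduction theorem to the family $\calY\to\Spec\calO_{L,S_L}$, whose generic fibre $(Y,D_Y)$ is stable: at each of the finitely many closed points over which $(\calY,\calD_Y)$ is not already semistable a finite base change produces a semistable model, and a single compositum yields one finite extension $L'\supset L$ together with the semistable reduction $\psi\colon(\calY^{ss},\calD_Y^{ss})\to(\calY,\calD_Y)_{\calO_{L',S_{L'}}}$ over $\calO_{L',S_{L'}}$, where $S_{L'}$ is just the set of places over $S$; in particular no primes are inverted beyond this, so the notion of $S$-integrality is unaffected. As $\psi$ is a composition of blow-ups, a normalization and a finite base change it is proper, and the generic fibres of $\calY^{ss}$ and $\calY$ agree, so $P$ defines an $L'$-point of $\calY^{ss}_{\eta}$; the valuative criterion of properness then lifts $P$ uniquely to a section $\widetilde P\colon\Spec\calO_{L',S_{L'}}\to\calY^{ss}$. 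Finally, since $\calD_Y^{ss}$ is the strict transform (the closure in $\calY^{ss}$) of $\calD_Y$, we have $\psi(\calD_Y^{ss})\subseteq\calD_Y$; so if $\widetilde P$ met $\calD_Y^{ss}$ over a place $w\notin S_{L'}$ then $P$ would meet $\calD_Y$ over the place $v$ below $w$, which cannot lie over $S$ since $w$ does not, contradicting $\supp P^{*}\calD_Y\subseteq S_L$. Hence $\supp\widetilde P^{*}\calD_Y^{ss}\subseteq S_{L'}$, i.e. (the image of) $P$ is integral in $(\calY^{ss},\calD_Y^{ss})$.

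The one step that needs genuine care is the stable reduction bookkeeping: checking that passing from the induced model to its semistable model costs only a finite base extension (and not an enlargement of $S$), and that $\calD_Y^{ss}$ is carried along as a strict transform so that the $S$-integrality of the section is preserved rather than manufactured along exceptional loci. Everything else is a formal consequence of the valuative criterion of properness; the genuinely delicate issue --- that going further from the semistable model to the actual stable model $(\calY^{s},\calD_Y^{s})$ can destroy integrality when $P$ specializes onto a $\psi$-contracted component --- is exactly what is deferred to the subsequent analysis (e.g. Proposition \ref{prop:no_contract}) and is not needed here.
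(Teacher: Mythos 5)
Your argument is correct and is essentially the paper's proof: the paper disposes of this lemma in one line by observing that the semistable reduction map sends $\calD_Y^{ss}$ into $\calD$, which is exactly the final step of your argument (the preceding steps — factoring the section through the induced model, lifting it by properness — are the routine details the paper leaves implicit). No further comment is needed.
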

\begin{proof}
  The proof follows by observing that the semistable reduction map sends $\calD_Y^{ss}$ to $\calD$.
\end{proof}

\begin{proposition}
\label{prop:no_contract}
Let $\phi: (\calY^{ss},\calD_Y^{ss}) \to (\calY^s,\calD_Y^s)$ be the stable reduction of the semi-stable model. If an $ms$-integral point $P$ of $(X,D)$ lies in $(Y,D_Y)$ and no irreducible component of a fiber of $(\calY^{ss},\calD_Y^{ss})$ containing the image of $P$ is contracted by $\phi$, then $P$ remains integral in $(\calY^s,\calD_Y^s)$.
\end{proposition}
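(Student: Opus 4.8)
The plan is to realize $P$ as a section of the semistable model, transport it along $\phi$ to a section of the stable model, and verify integrality place by place, invoking the no-contraction hypothesis exactly where it is needed.

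\textbf{Reduction to sections.} By Lemma~\ref{lem:ss}, after replacing $K$ by a finite extension $L$ (and $S$ by the set $S_L$ of places above it, enlarged if necessary so that the stable reduction is defined), the point $P$ extends to a section $\sigma\colon \Spec \calO_{L,S_L} \to \calY^{ss}$ that is integral with respect to $\calD_Y^{ss}$, i.e. $\sigma(v) \notin \calD_Y^{ss}$ for every closed point $v$ of $\Spec\calO_{L,S_L}$. Composing with $\phi$ yields a section $\tau := \phi\circ\sigma\colon \Spec\calO_{L,S_L}\to \calY^s$ of the stable model, and the conclusion that $P$ remains integral in $(\calY^s,\calD_Y^s)$ unwinds (Definition~\ref{def:integral}) to the assertion $\tau(v)\notin \calD_Y^s$ for every $v$. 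Since $(\calY^s,\calD_Y^s)$ is a stable model of $(Y,D_Y)$, this simultaneously exhibits $P$ as a stably $(S,D_Y)$-integral point in the sense of Definition~\ref{def:stab_c}.

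\textbf{Two facts about $\phi$.} The map $\phi$ is the stabilization of a family of pointed curves: it is an isomorphism over the generic point, the only curves it contracts are the unstable vertical components of $\calY^{ss}$, and it carries the sections $\calD_Y^{ss}$ onto the sections $\calD_Y^s$. Write $Z\subset\calY^{ss}$ for the union of the $\phi$-contracted components. I will use: (a) $\calD_Y^{ss}$ is the strict transform of $\calD_Y^s$, and the total transform $\phi^{-1}(\calD_Y^s)$ differs from it only along contracted components, so set-theoretically $\phi^{-1}(\calD_Y^s)\subseteq \calD_Y^{ss}\cup Z$; (b) off $Z$ the morphism $\phi$ is injective, since the only points it identifies are the endpoints of contracted chains, which lie on $Z$.

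\textbf{Conclusion.} Fix a closed point $v$ of $\Spec\calO_{L,S_L}$ and consider $\sigma(v)\in\calY^{ss}_v$. Each irreducible component of the fibre $\calY^{ss}_v$ through $\sigma(v)$ contains the point $\sigma(v)$, which lies in the image of $P$; hence by hypothesis no such component is contracted, that is $\sigma(v)\notin Z$. Since also $\sigma(v)\notin\calD_Y^{ss}$, fact~(a) gives $\sigma(v)\notin\phi^{-1}(\calD_Y^s)$, so $\tau(v)=\phi(\sigma(v))\notin\calD_Y^s$. As $v$ was arbitrary, $\tau$ is integral and the proposition follows.

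\textbf{The main obstacle.} The delicate point is fact~(a): one must rule out that $\phi$ drags a point lying only on non-contracted components onto a boundary section. The dangerous configuration is a contracted rational tail or chain carrying a marked point, whose stabilization slides that marked point onto the node where the tail meets the rest of the fibre; but that node lies on the tail, hence in $Z$, so the no-contraction hypothesis is precisely what prevents $\sigma(v)$ from being such a point. Making this local analysis clean --- and checking that it is insensitive to the further base extensions forced by stable reduction --- is the substance of the argument; the remainder is the valuative criterion of properness together with bookkeeping.
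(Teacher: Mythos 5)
Your proof is correct and follows essentially the same route as the paper's: both reduce to a place-by-place check using integrality in the semistable model (Lemma~\ref{lem:ss}) and then rule out the configuration where a contracted rational chain carrying a marked point drags the reduction of $P$ onto the boundary section, which is exactly what the no-contraction hypothesis forbids. The only difference is organizational --- you argue via the set-theoretic inclusion $\phi^{-1}(\calD_Y^s)\subseteq \calD_Y^{ss}\cup Z$ while the paper argues by contradiction on the exceptional chain $\Gamma$, and the paper notes in passing that the weaker hypothesis (only forbidding $P$ from lying on contracted chains that carry a marked point) already suffices.
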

\begin{proof}
 We will prove that the result holds under the weaker hypothesis that in each fiber of the induced model $(\calY,\calD_Y)$, the image of $P$ does not lie in a contractible component in a chain of rational curves containing a marked point. Consider the following diagram
  \[
    \begin{tikzcd}(\calY,\calD) \ar[d] & (\calY^{ss},\calD_Y^{ss}) \ar[l, "\sigma"] \ar[d] \ar[r, "\phi"] & (\calY^s,\calD_{Y}^s) \ar[d] \\ \Spec \calO_{K,S} \ar[bend left]{u}{P}& \Spec \calO_{K^{ss},S^{ss}} \ar[bend right]{u}[swap]{P} \ar[l] \ar[r] & \Spec \calO_{K^{ss},S^{ss}}  \ar[bend right]{u}[swap]{P}\end{tikzcd}
  \]
 By Lemma \ref{lem:ss}, $P$ is integral in the semistable model $\calY^{ss}$; we need to prove that the image of the contraction morphism $\phi(P)$ does not reduce to $\calD^s$ over any prime $\fp \notin S^{ss}$. Since this is local, we can fix a prime $\fp$ and work over the completion of $\calO_{K^{ss},S^{ss}}$ at $\fp$. The corresponding morphism, which we denote by $\phi_\fp$, is an isomorphism over the generic fiber $\calY^{ss}_\eta$ and a contraction on the special fiber $\calY^{ss}_\fp$. We have to prove that $\phi(P)$ does not specialize to a point lying in $\calD_Y^s$ in the fiber $\calY_\fp^s$.
 
 We argue by contradiction: assume that $\phi(P)$ intersects $\calD_Y^s$ in the fiber $\calY_\fp^s$. Since $\phi_\fp$ is the stable reduction of the fiber (see \cite[Chapter X]{acg}) we can assume that $\calY_\fp^{ss}$ was not already stable, otherwise $\phi_\fp$ would be an isomorphism, and we are assuming that $P$ is integral in the semistable model. Therefore, the fiber $\calY^{ss}_\fp$ is semistable but not stable and the map $\phi_\fp$ is a contraction of an exceptional chain $\Gamma$, i.e. a chain of rational curves which meets the rest of $\calY_\fp$ in at most two points (see the proof of \cite[Theorem 10.3.34 (b]{liu}).
 
 We are then reduced to the case in which the reduction of $P$ in  $\calY_\fp^s$ lies in $\Gamma$. By assumption, $P$ specializes to a point of $\calD_Y^s$, so there exists at least one component of $\Gamma$ containing at least one marked point, i.e. a point of $\calD_Y^{ss}$. We will show that this cannot happen.
 
 Recall that by hypothesis, the image of $P$ in the fiber $\calY_\fq$ of the model $\calY$, where $\fq$ is the prime lying under $\fp$, was not in any exceptional chain which meets the rest of $\calY_\fp$ in at most two points and contains a marked point. By Lemma \ref{lem:ss}, this implies that the same holds true in the semi-stable model. Therefore no component of $\Gamma$ can contain a marked point, exhibiting the contradiction. This proves that $\phi_\fp(\Gamma) = \phi_\fp(P)$ is disjoint from $\calD_Y^s$ and thus proves the proposition. 
\end{proof}

Lemma \ref{lem:ss} shows that any integral point $P: \Spec \calO_{K,S} \to (\calY,\calD_Y)$ remains integral in the semistable model. However, the image of $P$ under $\phi$ might intersect $\calD_Y^s$ in some fiber, so that $P$ is not integral in the stable model. To ensure that this will not happen, we will assume the log cotangent sheaf is almost ample.

By Corollary \ref{cor:slcample} the almost ampleness assumption ensures that all irreducible components of all fibers of $(\calY,\calD)$ are of log general type, as long as they are \emph{not contained in $D$ or the double locus of an slc fiber}. Since the stable reduction morphism contracts only components that are not of log general type, the almost ampleness assumption, combined with Proposition \ref{prop:no_contract}, implies that $\phi(P)$ remains integral in $(\calY^s)$. This shows that $ms$-integral points lying in stable subpairs \emph{outside the double locus} are stably integral in a stable pair of dimension two with almost ample log cotangent.

In particular, applying Proposition \ref{prop:no_contract} with a positivity assumption on the log cotangent of each fiber allows us to state the following Corollary.

\begin{corollary}\label{cor:descent1} Suppose that $(X,D)$ is a log canonical stable surface pair with good model $(\calX, \calD)$ such that each fiber has almost ample log cotangent. Let $(\calY, \calD_\calY)$ be the induced model of a stable curve $(Y, D_Y)$. If no fiber of $(\calY, \calD_\calY)$ lies in the double locus of a fiber of $(\calX, \calD)$, then $ms$-integral points of $(X,D)$ lying on $(Y, D_Y)$ are stably integral for $Y$, i.e. they satisfy the subvariety property. \end{corollary}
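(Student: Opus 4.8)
The plan is to deduce the corollary from Proposition \ref{prop:no_contract}, verifying its hypothesis by means of Corollary \ref{cor:slcample} and the almost ampleness of the fibers. Fix an $ms$-integral point $P$ of $(X,D)$ with $P \in Y \setm D_Y$. By Lemma \ref{lem:ss}, (the image of) $P$ is integral in the semistable model $(\calY^{ss},\calD_Y^{ss})$ of the induced model $(\calY,\calD_\calY)$, and its stable reduction $\phi \colon (\calY^{ss},\calD_Y^{ss}) \to (\calY^s,\calD_Y^s)$ is a stable model of the stable pointed curve $(Y,D_Y)$ (over a finite extension of the base ring). By Proposition \ref{prop:no_contract} --- more precisely, by the weaker form of its hypothesis isolated inside its proof --- it suffices to show: in every fiber of the induced model $(\calY,\calD_\calY)$, the specialization of $P$ does not lie on a contractible component that belongs to a chain of rational curves containing a marked point.

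To see this, I would first recall that $\calY$ is the closure of $Y$ in $\calX$, so each fiber $\calY_\fq$ is a curve inside the fiber $\calX_\fq$. A contractible component of a chain of rational curves in $\calY_\fq$ is a copy of $\PP^1$ carrying at most two special points --- nodes where it meets the rest of the fiber, together with marked points --- so its Euler characteristic is non-positive and it is not of log general type. On the other hand, each $(\calX_\fq,\calD_\fq)$ is an slc pair with almost ample log cotangent, so by Corollary \ref{cor:slcample} every irreducible curve in $\calX_\fq$ contained neither in $\calD_\fq$ nor in the double locus of $\calX_\fq$ is of log general type. Now a component of $\calY_\fq$ through which $P$ specializes is not contained in $\calD_\fq$, because $P$ is $ms$-integral for $(X,D)$, hence its specialization in $\calX_\fq$ avoids $\calD_\fq$; and by the hypothesis of the corollary no component of a fiber of $(\calY,\calD_\calY)$ lies in the double locus of a fiber of $(\calX,\calD)$. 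Therefore that component is of log general type, so it is not contracted by the stable reduction, and in particular the specialization of $P$ does not lie on a contractible component belonging to a chain of rational curves. This is exactly the condition needed.

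Invoking Proposition \ref{prop:no_contract} then gives that $P$ remains integral in the stable model $(\calY^s,\calD_Y^s)$; since the latter is a stable model of $(Y,D_Y)$, the point $P$ is stably $(S,D_Y)$-integral in the sense of Definition \ref{def:stab_c}, which is precisely the subvariety property. I expect the main obstacle to be the reconciliation, implicit in the second paragraph, of ``contractibility under stable reduction'' with ``failure of log general type'': one must be confident that the stable reduction morphism $\phi$ (and the semistable reduction preceding it) contracts only rational subconfigurations, that contracting such a configuration carrying a marked point forces an exceptional chain of rational curves whose components live in a fiber of the induced model, and that every such component is therefore caught by Corollary \ref{cor:slcample} once one excludes $\calD$ (using $ms$-integrality of $P$ in $(X,D)$) and the double locus (using the hypothesis). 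Keeping this compatible with the reduction already carried out in the proof of Proposition \ref{prop:no_contract}, and with the fact that $\calD_Y^{ss}$ maps into $\calD$ so that the marked structure stays horizontal, is where the care is required.
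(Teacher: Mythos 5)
Your proof is correct and follows essentially the same route as the paper: Lemma \ref{lem:ss} to get integrality in the semistable model, Corollary \ref{cor:slcample} plus the double-locus hypothesis to show the components met by the specialization of $P$ are of log general type (hence not contracted), and Proposition \ref{prop:no_contract} to conclude integrality in the stable model. The only cosmetic difference is that you verify the "weaker hypothesis" at the level of the induced model, while the paper phrases the same dichotomy on the semistable model; both are covered by the reduction already made in the proof of Proposition \ref{prop:no_contract}.
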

\begin{proof}
  Proposition \ref{prop:no_contract} ensures all fibers of $(\calY,\calD_\calY)$ are log general type, as we assumed no fiber lies in the double locus of a fiber of $(\calX,\calD)$. Given any $ms$-integral point $P$ of $(X,D)$ lying on $(Y,D_Y)$, this implies that the image of $P$ in the semistable model $(\calY^{ss},\calD_Y^{ss})$ either lies in an exceptional curve not containing points of $\calD_Y^{ss}$, or in a component of log general type. In particular either the component is not contracted by stable reduction, or it is contracted to a point not in $\calD_Y^s$.
\end{proof}

\section{$ms$-integral points in singular curves}\label{sec:sing}
Corollary \ref{cor:descent1} implies that $ms$-integral points lying on a stable pair $(Y,D_Y)$ satisfy the subvariety property provided that no component of a fiber of its induced model $(\calY, \calD_\calY)$ (see Definition \ref{def:inducedmodel}) lie in the double locus of a fiber of $(\calX, \calD)$. However, the $ms$-integral points might lie in non stable curves, (e.g. curves with worse than nodal singularities), in which the generic fiber of the induced model is not semi-stable and therefore we cannot apply stable reduction. Instead, we consider a stable map $Y' \to Y \hookrightarrow X$, and study how the $ms$-integral points on $Y'$ relate to those on $Y \subset X$.

\subsection{Singular curves}
Given a log canonical stable surface $(X,D)$, let $Y \subset X$ be an irreducible curve not contained in $D$ such that the pair $(Y, D_Y)$ is not stable. Therefore either $Y$ is singular, or $D_Y$ is non-reduced. If $Y$ is singular, but $D_Y$ is reduced, then we take the normalization $Y^\nu \to Y$ to obtain a stable map $Y^\nu \to X$, and proceed with Proposition \ref{prop:stable_map}. However, if $D_Y$ is nonreduced, then we give a construction of a stable map $f: Y' \to X$ where $Y'$ is a pre-stable curve, $f$ is a stable map, and the number of marked points in $Y'$ equals  $\deg(D_Y) = \deg(D \cap Y)$. In particular, we \emph{cannot} simply use the normalization as the degree equality will not be satisfied. This last condition will be necessary for studying curves that degenerate into the double locus of a degeneration of $X$ (see Section \ref{sec:descent} and Proposition \ref{prop:rational2}). We begin by giving an example that clarifies our motivation.

Recall that a \emph{stable map} (see e.g. \cite[Section 1.3]{AbGW}) is a map from a marked nodal connected projective curve to a projective variety such that the nodes are disjoint from the markings, and the group of automorphisms fixing the markings for a component contracted by the map is finite. 

\begin{example}
  Let $(X,D)$ be a log smooth stable surface with $D$ smooth and irreducible, and let $Y$ be a non-singular curve in $X$. Suppose that $\mathrm{Supp}(Y \cap D) = \{ P \}$ and that the multiplicity of intersection is 2. In this case $D_Y = D \cap Y = 2 P$. One natural way to construct a stable map $f: Y' \to X$ is to consider $Y' = Y$ where $f$ is the inclusion. However in this case the number of marked points on $Y'$ will be 1, while $\deg D_Y = \deg(D \cap Y) = 2$. On the other hand one can consider the surface pair $(X, D+Y)$ and the map $\pi: \widetilde{X} \to (X, D+Y)$, a compositions of two blow-ups centered at $P$ which gives a log-resolution of the pair. Then $\widetilde{Y} = \pi^*Y = Y_1 + E_1 + 2E_2$ is a non-reduced but nodal curve and $\widetilde{Y} \cap \pi_*^{-1}D$ is transverse; it is a point in $E_2$. We can take finite covers $t:\overline{X} \to \widetilde{X}$ branched along components of $\widetilde{Y}$ and take the (normalization of the) fibered product $\overline{Y} = \overline{X} \times_{\widetilde{X}} \widetilde{Y}$, which also gives a map $h: \overline{Y} \to X$. In this case, we take a single degree 2 cover $\overline{X} \to \widetilde{X}$ branched over $Y \cup E_1$. We then obtain a pre-stable curve $\overline{Y} \subset \overline{X}$ whose markings $D_{\overline{Y}}$ are given by the preimages of $D_{\widetilde{Y}}$, which are 2 points. Finally we can contract the non-stable components of the map $h$ -- namely the preimage of $E_1$ in $\overline{Y}$ -- and we get a curve $Y'$ which has two irreducible components, one isomorphic to $Y$ and the other a genus 1 component $E_2'$ containing the markings. Since we have contracted only unstable components for the map $h$ we get an induced map $f: Y' \to X$ which is stable and the number of markings of $Y'$ is precisely $\deg D_Y$.
\end{example}

\begin{construction}\label{sec:construction}Let $\pi: \widetilde{X} \to (X, D + Y)$ be a log resolution of the pair $(X, D+Y)$ and consider the curve $\widetilde{Y} = \pi^*Y$ (see Remark \ref{rmk:qfactorial}). By construction $\widetilde{Y}_{\text{red}}$ is a nodal curve. Consider the divisor $D_{\widetilde{Y}}$ on $\widetilde{Y}$ defined as $D_{\widetilde{Y}} = \widetilde{Y} \cap \pi_*^{-1}D$. Consider a composition of finite covers $t: \overline{X} \to \widetilde{X}$ branched along components of $\widetilde{Y}$ such that the fibered product $\overline{Y} = \overline{X} \times_{\widetilde{X}} \widetilde{Y}$ is a pre-stable curve. It comes with a map $h: \overline{Y} \to X$, and markings $D_{\overline{Y}}$ that are the preimages of $D_{\widetilde{Y}}$ under the cover.

Contracting any unstable components relative to the morphism $h: \overline{Y} \to X$ gives a curve $Y'$ with a map $\mu: \overline{Y} \to Y'$, and a stable map $f: Y' \to X$ whose image is $Y$, with markings given by the images $D_{Y'} = \mu(D_{\overline{Y}})$. By construction $\deg(D_{Y'}) = \deg(D_Y)$.\end{construction}

 \begin{remark}\label{rmk:qfactorial} Note that in the case in which $Y$ is not a $\Q$-Cartier divisor, we can consider a minimal $\Q$-factorialization $q: X' \to X$ (see e.g. \cite[Corollary 1.36]{singmmp}) and replace $Y$ by its strict transform in $X'$: this will not change the intersection $Y \cap D$ locally around $D$ since $(X,D)$ is log canonical. \end{remark}

 We first show that the stable map $f$ allows us to construct a nice semistable model of $Y'$.

\begin{proposition}
  \label{prop:stable_map}
The stable map $f$ induces a semistable model $\calY'$ of $Y'$ with a map $F: \calY' \to (\calX,\calD)$. Moreover, if each fiber of a good model of the log canonical stable surface $(X,D)$ has almost ample log cotangent then every positive dimensional component of every fiber of the stable reduction map $\calY' \to \calY''$ is mapped to $\calD$ or the double locus of any fiber of $(\calX, \calD)$ by $F$.
\end{proposition}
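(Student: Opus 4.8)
The plan is to build the pair $(\calY',F)$ by combining semistable reduction for pointed curves with the valuative criterion of properness for the good model $(\calX,\calD)$, and then to obstruct the $\phi$-contracted fibre components by feeding their images into the log-general-type dichotomy of Section \ref{sec:log_ct}. First I would spread the stable map $f\colon(Y',D_{Y'})\to(X,D)$ out over $\Spec\calO_{K,S}$, enlarging $S$ and passing to a finite extension of $K$ as in Lemma \ref{lemma:spread}, obtaining a proper model $(\calY_0,\calD_0)$ with generic fibre $(Y',D_{Y'})$ and a rational map to $(\calX,\calD)$. After a further finite base change, semistable reduction for pointed nodal curves produces a semistable model $\calY'$ of $(Y',D_{Y'})$ dominating $\calY_0$; since $\calX$ is proper the valuative criterion extends $f$ over every codimension-one point, so $\calY'\dashrightarrow\calX$ fails to be defined only at finitely many closed points of special fibres, and I would resolve these by blow-ups centered at smooth points or nodes of the (regular) fibred surface $\calY'$, each of which preserves semistability, to obtain $F\colon\calY'\to\calX$; take $\calY'$ minimal with this property. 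Since $f^{-1}(D)=D_{Y'}$ on the generic fibre — this is exactly the equality $\deg D_{Y'}=\deg D_Y$ imposed in Construction \ref{sec:construction} — and $X$ is normal, $F^*\calD$ has horizontal part $\calD_{Y'}$ and vertical part a union of entire components of special fibres; in particular $F$ carries $\calD_{Y'}$ into $\calD$, which gives the first assertion. Stabilizing $(\calY',\calD_{Y'})$ then yields $\phi\colon\calY'\to\calY''$, which contracts exactly the unstable rational components of special fibres, each lying in an exceptional chain meeting the rest of its fibre together with $\calD_{Y'}$ in at most two points (cf.\ \cite[Theorem 10.3.34]{liu}, \cite[Chapter X]{acg}).

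For the second assertion, let $C\cong\PP^1_{k(\fp)}$ be a positive-dimensional component of a fibre of $\phi$, so that $C$ carries at most two special points (nodes of $\calY'_\fp$ and points of $\calD_{Y'}$), and suppose for contradiction that $F(C)$ is contained neither in $\calD_\fp$ nor in the double locus of $\calX_\fp$. If $F(C)$ is a curve, it is an irreducible subvariety of the slc surface $\calX_\fp$ not contained in $\calD_\fp$ or $\textrm{Sing}(\calX_\fp)$, so Corollary \ref{cor:slcample} (with Remark \ref{prop:ampleness} in the surface case) shows $\big(F(C),(F(C)\cap\calD_\fp)_{red}\big)$ is of log general type; since $F(C)$ is rational, its normalization $\PP^1$ then carries at least three points in the reduced preimage of $F(C)\cap\calD_\fp$. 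The crux is that every such point pulls back into the set of special points of $C$: a point of $C$ lying over $\calD_\fp$ belongs to the support of $F^*\calD$, which is $\calD_{Y'}$ together with whole fibre components, so it is either a marked point of $C$ or a node of $\calY'_\fp$ meeting a fibre component that maps into $\calD_\fp$; this forces at most two such points, a contradiction. If instead $F(C)$ is a single point, I would use that $\calY'_\fp$ is connected and dominates the one-dimensional fibre of the induced model of $Y$: the exceptional chain through $C$ attaches to a component $C_0$ with $F(C_0)$ a curve passing through $F(C)$, and applying the previous paragraph to $C_0$ puts $F(C_0)$, hence $F(C)$, into $\calD_\fp$ or the double locus.

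The hard part will be the bookkeeping in the last step — specifically controlling the $F$-preimage of the double locus of $\calX_\fp$, not just of $\calD_\fp$. Since $X$ is normal, the relative double locus of $\calX\to\Spec\calO_{K,S}$ is vertical of codimension at least two, so its preimage under $F$ meets $C$ only in nodes of $\calY'_\fp$ (where $C$ meets a fibre component mapped into the double locus) or in the finitely many indeterminacy points created when resolving $\calY'\dashrightarrow\calX$; excluding the latter is precisely where the delicate choices in Construction \ref{sec:construction} — arranging that curves meeting $D$ non-transversally acquire markings compatible with a degeneration into the double locus, which is also why the equality $\deg D_{Y'}=\deg D_Y$ is imposed there — must be used.
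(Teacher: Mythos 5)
There is a genuine gap, and it originates in your construction of $(\calY',F)$. The paper does not spread out $f$ and then resolve a rational map: it invokes the properness of the moduli stack of stable maps $\calM_{g,n}(\calX,\beta)$ (the reference \cite{AbOrt}) to extend $f$ in one step to a family of stable maps $F\colon\calY'\to\calX$ over $\calO_{K,S}$. That choice buys two things simultaneously: the fibers of $\calY'$ are automatically connected nodal curves, so $\calY'$ is semistable; and, crucially, $F$ is a \emph{stable map} on every fiber, so any component on which $F$ is constant has $2g-2+n>0$ and therefore is never one of the components contracted by the stable reduction $\calY'\to\calY''$. Your ad hoc $F$ --- semistable reduction followed by blow-ups resolving the indeterminacy of $\calY'\dashrightarrow\calX$ --- carries no such guarantee; indeed the exceptional $\PP^1$'s you introduce are contracted by $F$ and may land anywhere in $\calX_\fp$. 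This forces you into the case where a $\phi$-contracted component $C$ has $F(C)$ a single point, and your treatment of that case does not work: the component $C_0$ to which the exceptional chain attaches is by definition \emph{not} contracted by $\phi$, so ``the previous paragraph'' (which uses that the component has at most two special points) does not apply to $C_0$; and even if one knew $F(C_0)$ were a log general type curve through $F(C)$, that says nothing about whether the particular point $F(C)$ lies in $\calD_\fp$ or the double locus.

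For the non-constant case your argument is essentially the paper's, run in the contrapositive: the paper observes that $Z$ is not of log general type and $F|_Z$ is finite, hence $F(Z)$ is not of log general type, hence by Corollary \ref{cor:slcample} it must lie in $\calD$ or the double locus; you instead assume $F(C)$ avoids both, apply Corollary \ref{cor:slcample} to get at least three boundary points on the normalization of $F(C)$, and pull them back to special points of $C$. Note, though, that the boundary furnished by Corollary \ref{cor:slcample} includes the intersection with the double locus (the almost ampleness hypothesis lives on $(X^\nu,D^\nu+\Delta^\nu_{dl})$), so the ``bookkeeping'' you defer at the end --- showing that points of $C$ lying over the double locus of $\calX_\fp$ are special points of the fiber --- is an integral part of the count, not an afterthought. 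The clean repair is to adopt the paper's construction of $(\calY',F)$ via properness of $\calM_{g,n}(\calX,\beta)$; the constant case then disappears and the second paragraph of your argument goes through in the paper's form.
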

\begin{proof}
  Let $g$ and $n$  denote the genus and the number of markings of $Y'$ respectively, and let $\beta$ be the class in $H_2(\calX,\Z)$ of the closure of $Y$ in the good model of $(X,D)$. Then by \cite{AbOrt}, the stack of stable maps $\calM_{g,n}(\calX,\beta)$ is a proper Artin stack, and so the map $f$ admits a closure $F: \calY' \to \calX$ as a stable map over $\calO_{K,S}$ (after possibly extending $K$ and $S$). The map $F$ can be thought of a family of stable maps whose generic fiber is $f$.
  By definition, the curves appearing as fibers of $\calY'$ are connected, projective, and at worst nodal. Therefore this is a semistable model for $Y'$. 
  
 Let $\sigma: \calY' \to \calY''$ be the stable reduction map. We have to prove that any component contracted by $\sigma$ is sent to $\calD$ or to the double locus of a fiber of $(\calX,\calD)$ through $F$. Assume that there exists a prime $\fp$ not in $S$, and a irreducible component $Z$ of $\calY'_\fp = \calY' \times_{\Spec \calO_{K,S}} \F_\fp$ which makes $\calY'_\fp$ non-stable as a curve. This implies in particular, that $2g(Z) - 2 + n_\fp \leq 0$, where $n_\fp$ is the number of special points in $Z$. Since the map $F$ is stable, this implies that $F$ is not constant on $Z$, and therefore $F$ restricted to $Z$ gives a finite map $F: Z \to F(Z).$ In particular $F(Z)$ is irreducible. However, by Corollary \ref{cor:slcample}, every irreducible curve in any fiber of the good model $(\calX,\calD)$ not contained in $\calD$ or the double locus of any non-normal fiber is of log general type. Since $F(Z)$ is not of log general type, this implies that $F(Z)$ is contained in $\calD$ or the double locus of a fiber.
\end{proof}

The above proposition implies that the study of ms-integral points on singular curves on $X$ can be reduced to the study of integral points on the semistable model of the normalization endowed with the stable map to the good model. In analogy with Definition \ref{def:inducedmodel} we give the following:

\begin{definition}
  \label{def:induced_map}
Let $(\calX,\calD)$ be a good model of a log canonical surface pair $(X,D)$ and let $Y \subset X$ be a proper irreducible curve not contained in $D$. We call the model of the map $f: Y' \to X$ of Proposition \ref{prop:stable_map} the \emph{induced model} of $Y$ and denote it by $(\calY',\calD_{Y'})$, or $\calY'$. It comes endowed with a stable map $F: \calY' \to (\calX,\calD)$.
\end{definition}

\begin{lemma}
  \label{lem:ms-sing} 
Let $P: \Spec \calO_{K,S} \to (\calX,\calD)$ be an ms-integral point of $X$ lying in the smooth locus of a non-stable curve $Y \subset X$. Let $F: \calY' \to (\calX,\calD)$ be the induced model of $Y$ and assume that no fiber of $\calY'$ is mapped to the double locus of a fiber of $(\calX,\calD)$. Then $P$ lifts as an integral point on $\calY'$ and remains integral in the stable model $\calY''$ of $\calY'$. In particular, any ms-integral point lying in $Y$ is stably integral in the normalization $Y'$, i.e. $ms$-integral points satisfy the subvariety property.
\end{lemma}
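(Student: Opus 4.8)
The strategy is to transport the $ms$-integral point $P$ through Construction \ref{sec:construction} and the induced model of Proposition \ref{prop:stable_map}, after which everything reduces to the valuative criterion of properness together with the control on $F$-images supplied by that proposition, in close parallel with the proof of Proposition \ref{prop:no_contract}. Indeed, Proposition \ref{prop:stable_map} was designed precisely so that the components contracted by the stable reduction of $\calY'$ are forced into $\calD$ or into a double locus; the present lemma is the bookkeeping that tracks $P$ itself through that picture.

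First I would lift $P$ and check integrality on $\calY'$. Since $P$ lies in the smooth locus of $Y$ and in $X\setm D$, while Construction \ref{sec:construction} (together with the $\Q$-factorialization of Remark \ref{rmk:qfactorial} when $Y$ fails to be $\Q$-Cartier) alters $Y$ only over $Y\cap D$ and $\operatorname{Sing}(Y)$, the stable map $f\colon Y'\to X$ is an isomorphism onto $Y$ near $P$; hence $P$ lifts uniquely to a point $P'\in Y'\setm D_{Y'}$. Passing to a finite extension $L\supset K$ over which the induced model $F\colon(\calY',\calD_{Y'})\to(\calX,\calD)$ of Definition \ref{def:induced_map} is defined, and noting that $P$ remains integral with respect to the base-changed good model over $\calO_{L,S_L}$, properness of $\calY'\to\Spec\calO_{L,S_L}$ extends $P'$ to a section $\mathcal P'$, while separatedness of $\calX$ forces $F\circ\mathcal P'=P$. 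Because $D_{Y'}$ maps into $D$ under $f$ — it consists of preimages of $\widetilde Y\cap\pi_*^{-1}D$ — the morphism $F$ sends $\calD_{Y'}$ into $\calD$; so if $\mathcal P'$ met $\calD_{Y'}$ over a prime $\fp\notin S_L$, then $P=F\circ\mathcal P'$ would meet $\calD$ over $\fp$, contradicting $ms$-integrality. Thus $\mathcal P'$ is integral in $\calY'$.

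Next I would descend to the stable model. Let $\sigma\colon\calY'\to\calY''$ be the stable reduction map (defined after a further finite extension, still written $\calO_{L,S_L}$), and suppose for contradiction that $\sigma(\mathcal P')$ meets $\calD_{Y''}$ over some $\fp\notin S_L$. Let $Z$ be a component of $\calY'_\fp$ through $\mathcal P'(\fp)$. If $\sigma$ does not contract $Z$, then $\sigma(\mathcal P'(\fp))$, being a marking of the stable curve $\calY''_\fp$, is not a node, so $\mathcal P'(\fp)$ is itself a marking of $\calY'$ lying on $Z$, contradicting the integrality of $\mathcal P'$ in $\calY'$ established above. If $\sigma$ contracts $Z$, then $Z$ is a positive-dimensional component of a fiber of $\sigma$, so Proposition \ref{prop:stable_map} forces $F(Z)$ to lie in $\calD$ or in the double locus of a fiber of $(\calX,\calD)$; the hypothesis of the lemma excludes the latter, so $F(Z)\subseteq\calD$ and hence $P(\fp)=F(\mathcal P'(\fp))\in\calD$, again contradicting $ms$-integrality. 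Therefore $\sigma(\mathcal P')$ is integral in $\calY''$, the stable model of $\calY'$; since the generic fiber of $\calY''$ is a stable model of $(Y',D_{Y'})$, the point $P$ is stably $(S,D_{Y'})$-integral, which is exactly the subvariety property.

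The main obstacle is not a new idea but the geometric bookkeeping: verifying that $f$ is genuinely an isomorphism near $P$ and that $F(\calD_{Y'})\subseteq\calD$, both of which must be read off carefully from Construction \ref{sec:construction}; and, in the descent step, justifying that the dichotomy ``$\mathcal P'(\fp)$ lies on a marking section / on a contracted component of $\calY'_\fp$'' is exhaustive — equivalently, that every point of a special fiber of $\calY'$ mapping to a marking of $\calY''$ under $\sigma$ either is a marking of $\calY'$ or lies on a $\sigma$-contracted component. This is the standard local structure of stable reduction of pointed nodal curves (as already used in Proposition \ref{prop:no_contract}), but it is the step most in need of care. Once it is in place, the remainder is only the valuative criterion of properness and separatedness.
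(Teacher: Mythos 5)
Your proposal is correct and follows essentially the same route as the paper: lift $P$ to $\calY'$ using smoothness of $P$ on $Y$, observe that integrality in $\calY'$ follows from $F(\calD_{Y'})\subseteq\calD$ and $ms$-integrality of $P$, and then invoke Proposition \ref{prop:stable_map} to see that every contracted component maps to $\calD$ or the double locus, with the former excluded by $ms$-integrality and the latter by hypothesis. The extra care you take with the valuative-criterion extension of the lift and with the dichotomy at the special fiber is a more explicit rendering of the same argument, not a different one.
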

\begin{proof}
  Since $P$ is a smooth point of $Y$, the point $P$ automatically lifts to a point of $Y'$. Since the model $\calY'$ is semistable, the point $P$ remains integral in the stable model if and only if it does not hit any component of $\calY'$ that gets contracted under stable reduction that contains a marked point.  By Proposition \ref{prop:stable_map}, any such component is mapped either to $D$ or to the double locus of a fiber of $(\calX,\calD)$ by $F$. Since  $P$ was $ms$-integral in $X$  the lift of $P$ on $\calY'$ cannot intersect any component mapped to $D$. On the other hand, by hypothesis there is no component of $\calY'$ mapped to the double locus of a fiber of $(\calX,\calD)$. This implies that the image of $P$ in the stable model of $Y'$ is integral. 
\end{proof}

\begin{remark}
  By definition of a stable map, in the semistable model of the normalization any lifting of an $ms$-integral cannot hit any vertical component that is mapped to $D$ outside $S$, since this will contradict the fact that the point is integral in the good model $(\calX,\calD)$.
\end{remark}

\section{Subvariety property for surfaces}\label{sec:descent}

By the work of Sections \ref{sec:subv} and \ref{sec:sing}, we see that we would be able to conclude uniformity (under a positivity assumption on the log cotangent) if no component of a fiber of the induced model $(\calY,\calD_\calY) \subset (\calX, \calD)$ which lies in the double locus of a fiber of $(\calX , \calD)$ is contractible, and we will show this in Proposition \ref{prop:rational} and \ref{prop:rational2}. We first recall a property of the double locus $\dbl$ of an slc  pair.

\begin{proposition}\cite[Sec. 5.2]{singmmp} \label{prop:kollar} The morphism $\pi: \dblv \to \dbl$ is generically finite of degree two, ramified at the pinch points. \end{proposition}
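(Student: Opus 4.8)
This is \cite[Sec.~5.2]{singmmp}, and the plan is to carry out Kollár's local analysis. Since the claim is local on $X$, I would first reduce to the generic point of each irreducible component of $\dbl$ and then, separately, to its codimension-one points (equivalently, the codimension-two points of $X$ lying on $\dbl$).

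The starting point is that, being slc, $X$ is demi-normal (see Definition~\ref{def:slc} and \cite[Sec.~5]{singmmp}): in particular it is reduced, $S_2$, and nodal in codimension one, the non-normal locus is a divisor, and the normalization $\nu\colon X^\nu\to X$ is finite, with $\dbl$ and $\dblv$ the reduced image and preimage of the conductor subscheme. Thus $\pi$ is the finite morphism $\nu|_{\dblv}\colon\dblv\to\dbl$. First I would localize at the generic point $\eta$ of a component of $\dbl$: there the completed local ring of $X$ is that of a node, whose normalization is free of rank two over it, so $\pi$ is generically finite of degree exactly two on each component of $\dbl$, and (in characteristic zero) generically \'etale there.

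Having shown $\pi$ is \'etale of degree two over a dense open of $\dbl$, its ramification locus is then a proper closed subset, hence supported in codimension $\ge 2$ on $X$. To identify it I would invoke the classification of demi-normal singularities in codimension two from \cite[Sec.~5.2]{singmmp}: at such a point $X$ is, \'etale-locally, either a transversal double point — where $\pi$ is \'etale of degree two — or a pinch point $(x^2 = z y^2)\subset\Aff^3$. For the pinch point the remaining computation is short: the normalization is $\Aff^2\to\Aff^3$, $(y,t)\mapsto(ty,\,y,\,t^2)$, under which the double line $\{x=y=0\}$ pulls back to $\{y=0\}$ and $\pi$ becomes $t\mapsto t^2$, a degree-two cover ramified precisely over the pinch point. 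Combining these, $\pi$ is \'etale of degree two away from the pinch points and ramified over each of them. The one nonroutine ingredient — and the reason the result is simply cited — is the classification of codimension-two demi-normal singularities; granted that, what remains is the two elementary local models above together with the demi-normality of $X$.
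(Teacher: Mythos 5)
The paper gives no proof of this proposition at all: it is quoted verbatim from \cite[Sec.~5.2]{singmmp}, so there is no argument of the authors' to compare yours against. Your reconstruction is the standard one from that source, and both local computations are correct: at the generic point of a component of $\dbl$ the variety is a node, whose normalization separates the two branches, giving a generically \'etale degree-two map in characteristic zero; and for the pinch point $x^2=zy^2$ the normalization $(y,t)\mapsto(ty,y,t^2)$ carries the conductor $\{y=0\}$ onto the double line by $t\mapsto t^2$, ramified exactly over the pinch point.

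The one step I would push back on is the claim that a demi-normal $X$ is, \'etale-locally at every codimension-two point of $\dbl$, either a transversal double point or a pinch point. That dichotomy is the classification of \emph{semi-smooth} singularities. The paper applies the proposition to arbitrary slc (indeed semi-log canonical surface) pairs, and those admit further non-normal codimension-two points -- for surfaces, the degenerate cusps -- at which $\dbl$ itself acquires several branches and your case analysis does not apply; outside the lc world there are even demi-normal hypersurface models such as $x^2=z^3y^2$ where a single branch of $\dblv$ maps $2{:}1$ onto a branch of $\dbl$ with a ramification point that is not a pinch point. So as written your argument proves the statement for semi-smooth $X$ only. To get the general statement one should do what Koll\'ar actually does: pass to the normalizations of the conductors, observe that the induced finite morphism $\overline{\Delta}^{\nu}_{dl}\to\overline{\Delta}_{dl}$ is generically degree two (your codimension-one computation), hence carries a Galois involution $\tau$, and then identify the fixed divisor of $\tau$ -- equivalently the branch locus -- with the pinch points by a local analysis at codimension-one points of $\overline{\Delta}^{\nu}_{dl}$, which is where your pinch-point model reenters. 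Either formulate the codimension-two step that way, or state explicitly that you are deferring the classification to \cite[Sec.~5.2]{singmmp}, as the paper does.
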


    \begin{prop}\label{prop:involution} Let $(X,D)$ be an slc surface pair. If $D \cap \dbl \neq \emptyset$, then the intersection must be a nodal point of $D$. \end{prop}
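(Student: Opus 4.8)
Here is how I would try to prove Proposition~\ref{prop:involution}.

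The plan is to localize near a point $p\in D\cap\dbl$ and to read off the local structure of $D$ from the log canonicity of the normalized pair $(X^{\nu}, D^{\nu}+\dblv)$, using the description of the double locus in Proposition~\ref{prop:kollar}. First I would record two preliminary reductions. Since $(X^{\nu}, D^{\nu}+\dblv)$ is log canonical (Definition~\ref{def:slc}), $\supp D$ contains no component of $\dbl$: a common component would appear in $D^{\nu}+\dblv$ with coefficient sum $>1$. As $X$ is a surface, $\dbl$ is a curve, so $D\cap\dbl$ is a finite set of closed points; fix one of them, $p$, and work with $\supp D$ near $p$ so that the statement about the curve $D$ is meaningful. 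Second, since an slc surface is Gorenstein in codimension one and, along $\dbl$, is \'etale-locally either a node or a pinch point, $K_X$ is Cartier near $\dbl$; combined with $K_X+D$ being $\Q$-Cartier, this shows $D$ is $\Q$-Cartier near $\dbl$.

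Assume first that $p$ is not a pinch point. Then by Proposition~\ref{prop:kollar} the map $\nu$ is \'etale over a punctured neighbourhood of $p$ in $\dbl$ and $\nu^{-1}(p)=\{p_1,p_2\}$; \'etale-locally $X$ is $\{xy=0\}\subset\Aff^3$ with $\dbl=\{x=y=0\}$, the gluing of its two smooth sheets along $\dbl$. If $D^{\nu}$ passed through only one of $p_1,p_2$, then near $p$ the curve $D$ would lie in a single sheet of $X$, and such a divisor is not $\Q$-Cartier on $\{xy=0\}$, contradicting the reduction above; since $D=\overline{D\setminus\dbl}$ passes through $p$, it follows that $D^{\nu}$ passes through both $p_1$ and $p_2$. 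Log canonicity of $(X^{\nu}, D^{\nu}+\dblv)$ at $p_i$ then forces $D^{\nu}$ to be a single smooth branch there: on a surface, a second branch of $D^{\nu}$ (or a singular branch), together with the branch of $\dblv$ through $p_i$, already fails to be log canonical. Hence near $p$ the divisor $D$ is the union of one smooth branch from each sheet, and these meet only at $p$. Normalizing $D$ near $p$ therefore recovers the disjoint union of two smooth branches with a single point of each lying over $p$, so $\widehat{\calO}_{D,p}\cong k[[u,v]]/(uv)$; that is, $p$ is a node of $D$.

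The case where $p$ is a pinch point is the one I expect to be the main obstacle. There $\nu^{-1}(p)$ is a single point, fixed by the gluing involution, and \'etale-locally $X$ is the (Gorenstein) Whitney umbrella $\{x^2=zy^2\}$; the naive local picture then makes $D$ smooth at $p$, so to obtain the stated conclusion one must show that $D$ cannot meet a pinch point at all. The plan for this step is to analyse the local class group of the pinch point through the conductor/gluing formalism of \cite[Sec.~5.2]{singmmp}, together with the constraints coming from $(X,D)$ being an slc (stable) pair, in order to exclude an effective boundary component passing through it. I would develop this step most carefully; the remaining assertions follow directly from log canonicity on the normalization and the fact that $D$ descends to $X$.
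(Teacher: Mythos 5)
Your argument for the non-pinch-point case is essentially sound and takes a more hands-on route than the paper: you use $\Q$-Cartierness of $D$ near $\dbl$ to force $D^\nu$ to meet both points of $\nu^{-1}(p)$, and then local log canonicity on each smooth sheet to force a single smooth branch at each preimage, whence a node. The paper instead gets ``both preimage points lie on $D^\nu$'' from the $\tau$-invariance of the different $\text{Diff}_{\overline{\Delta}^{\nu}_{dl}} D^\nu$ under the gluing involution \cite[Proposition 5.12]{singmmp}, combined with $\mathrm{Supp}(\text{Diff}_{\overline{\Delta}^{\nu}_{dl}} D^\nu)=\mathrm{Supp}(D^\nu\cap\dblv)$ \cite[Proposition 4.5]{singmmp}. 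Both mechanisms deliver the same conclusion in that case.

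The genuine gap is the pinch-point case, which you explicitly leave as a ``plan'' (analyse the local class group of the Whitney umbrella) without executing it. This is precisely the nontrivial content of the proposition: one must show that $D^\nu\cap\dblv$ cannot contain the preimage of a pinch point, and this does not follow from the local-coordinate considerations you set up for the node case (indeed, a single smooth transverse branch of $D^\nu$ through the preimage of a pinch point is perfectly log canonical on the normalization, so lc-ness alone on $X^\nu$ does not exclude it). The paper extracts the exclusion from the slc gluing formalism of \cite[Sec.~5.2]{singmmp} --- the same $\tau$-invariance of the different that drives the rest of its proof --- so the step you defer is exactly the step where the cited structure theory is indispensable. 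A secondary, smaller issue: your case analysis tacitly assumes that every point of $\dbl$ is either a node or a pinch point, but an slc surface can have other codimension-two singularities along the double locus (higher normal crossings such as $xyz=0$, degenerate cusps, and quotients thereof), over which $\nu^{-1}(p)$ need not consist of two points; these must either be excluded by a separate lc computation or absorbed into a uniform argument via the different, as the paper does. As written, your proposal does not prove the proposition.
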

\begin{proof} 
There is a Galois involution $\tau: \overline{\Delta}^{\nu}_{dl} \to \overline{\Delta}_{dl}$, from the normalization of $\dblv$ to the normalization of $\dbl$ (see \cite[Section 5.1]{singmmp}). By \cite[Proposition 5.12]{singmmp} the different $(\overline{\Delta}^{\nu}_{dl}, \text{Diff}_{\overline{\Delta}^{\nu}_{dl}} D^\nu)$, where $\overline{\Delta}^{\nu}_{dl}$ is the normalization of $\dblv$, is $\tau$-invariant.  
In particular, $\text{Supp}(\text{Diff}_{\overline{\Delta}^{\nu}_{dl}} D^\nu)= \textrm{Supp}(D^\nu \cap \dblv)$ by \cite[Proposition 4.5]{singmmp}.  Since $(X,D)$ is slc, the intersection points $D^\nu \cap \dblv$ cannot contain the preimage of a pinch point. Therefore, for any point $p \in D \cap \dbl$,  the set $\{D^\nu \cap \dblv\}$ contains the whole fiber $\nu^{-1}(p)$, which consists of two points by Proposition \ref{prop:kollar}.  Therefore $p \in D$ is a node. 
\end{proof}

\begin{prop}\label{prop:rational}  Let $(\calX, \calD)$ be a good model of a log canonical stable surface pair such that every fiber has almost ample log cotangent. Let $(Y,E)$ be a stable curve inside the generic fiber of $(\calX, \calD)$, and let $(\calY, \calD_\calY)$ be its induced model (see Definition \ref{def:inducedmodel}). Then every positive dimensional component of every fiber of the stable reduction is not mapped to $\calD$. \end{prop}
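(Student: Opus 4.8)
The plan is to show that any positive-dimensional component of a fiber of the stable reduction of $(\calY,\calD_\calY)$ which maps into $\calD$ must contract a component carrying a marked point, and that the almost ampleness of the log cotangent on each fiber of $(\calX,\calD)$ rules this out. First I would set up the stable reduction diagram $(\calY,\calD_\calY) \leftarrow (\calY^{ss},\calD_Y^{ss}) \xrightarrow{\phi} (\calY^s,\calD_Y^s)$ as in Section \ref{sec:subv}, working locally over a prime $\fp \notin S$, and recall that $\phi_\fp$ contracts only exceptional chains $\Gamma$ of rational curves meeting the rest of the fiber in at most two points (cf. the proof of Proposition \ref{prop:no_contract}). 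So a positive-dimensional component $Z$ of a fiber of the stable reduction that gets contracted is a rational curve sitting in such a chain $\Gamma$, and its image in $\calX$ is an irreducible curve in a fiber $X_\fp$ of the good model.

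Next I would invoke Corollary \ref{cor:slcample}: since every fiber of $(\calX,\calD)$ has almost ample log cotangent, every irreducible curve in a fiber that is \emph{not} contained in $\calD$ or in the double locus of that (possibly non-normal) fiber is of log general type, hence in particular is not rational. Therefore the contracted component $Z$, being a $\PP^1$, must map either into $\calD$ or into the double locus of the fiber $X_\fp$. The statement to be proved is precisely that the first option, mapping into $\calD$, cannot occur. So the crux is to rule out that an exceptional chain $\Gamma$ contracted by $\phi_\fp$ maps into $\calD$.

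Here is where I would use the structure of the induced model together with integrality. The key point is that the markings $\calD_Y^{ss}$ on the semistable model specialize into the divisor $\calD$ of the good model, by (the analogue of) Lemma \ref{lem:ss}: the induced model $(\calY,\calD_\calY)$ is the closure of $(Y,E)$ inside $(\calX,\calD)$, so $\calD_\calY \subset \calD$ and this persists after semistable reduction. Now if a contracted chain $\Gamma$ were mapped entirely into $\calD$, then consider the point $\phi_\fp(\Gamma) \in \calY^s_\fp$: it lies in $\calD_Y^s$ precisely when $\Gamma$ contains a marked point. But whether or not $\Gamma$ contains a marked point, the generic fiber $Y$ is \emph{not} contained in $\calD$ (it is a stable curve inside the generic fiber, not a component of $D$), so the chain $\Gamma$, which is vertical, being mapped into the horizontal-in-the-relevant-sense divisor $\calD$ forces $\Gamma$ to lie in a fiber of $\calD \to \Spec \calO_{K,S}$; combining this with the fact that $\calD_\calY$ is the flat limit of the markings $E\subset Y$ and $Y$ meets $D$ properly, one gets a contradiction with the contractibility constraint that $\Gamma$ meets the rest of the fiber in at most two points while absorbing all the marked points that specialized onto it. More cleanly: by Proposition \ref{prop:involution}, intersections of $\calD$ with the double locus of a fiber are forced to be nodal points of $\calD$, so a positive-dimensional $Z\subset\calD$ cannot simultaneously sit inside the double locus; hence a contracted $Z$ mapping to $\calD$ would have to map to $\calD$ and avoid the double locus, but then by Corollary \ref{cor:slcample} applied to the curve $Z$ inside $\calD$ (regarded as living in the fiber), $Z$ would need to be non-rational unless $Z$ maps to a \emph{point} of $\calD$ — and a chain $\Gamma$ of rational curves mapping to a point of $\calD$ contradicts that the induced model is the closure of $(Y,E)$ with $Y\not\subset D$, since then $Y$ itself would meet $D$ in a way incompatible with properness of the intersection.

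I expect the main obstacle to be the last step: making precise why a contracted exceptional chain carrying a marked point cannot map into $\calD$, i.e. ruling out that the flat limit of a marking of $(Y,E)$ lands on a rational component that $\phi$ contracts while that component also lies in $\calD$. The clean way to handle this is to argue at the level of the stable map / induced model bookkeeping of marked points: the number of markings is preserved ($\deg \calD_Y^s = \deg E$), and by Proposition \ref{prop:involution} together with Corollary \ref{cor:slcample} the only curves in a fiber that could absorb a marking and be contracted are rational curves contained in $\calD$ or the double locus; a rational curve contained in $\calD$ and meeting the rest of the fiber in $\le 2$ points, after accounting for where the markings of $Y$ degenerate, forces $Y\subset D$, contradicting the hypothesis. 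I would write this out carefully, treating the marked and unmarked chains separately as in Proposition \ref{prop:no_contract}, and reduce to the marked case since an unmarked contracted chain does not affect integrality of $P$.
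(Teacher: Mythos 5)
Your opening moves match the paper: you reduce via Corollary \ref{cor:slcample} to contracted components lying either in $\calD$ or in the double locus of a fiber, and you correctly observe that Proposition \ref{prop:involution} prevents a positive-dimensional component from lying in both. But the argument then goes off in the wrong direction and leaves the decisive case unhandled. The paper's proof is a short case analysis on a component $Z$ contained in the \emph{double locus}: if $Z \cap \calD = \emptyset$ there is nothing to do, and if $Z \cap \calD \neq \emptyset$ then Proposition \ref{prop:involution} says these intersection points are \emph{nodes of $\calD$}; since marked points of a semistable curve must be smooth points away from the nodes, passing to $(\calY^{ss},\calD^{ss}_\calY)$ blows up exactly these nodes, so the strict transform of $Z$ becomes disjoint from the marked divisor and is therefore either not contracted or contracted to a point not in $\calD$. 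This blow-up step is the entire content of the proposition for the double-locus case, and it is absent from your proposal: you use Proposition \ref{prop:involution} only to separate the two loci, not to identify the intersection points as nodes that get resolved in the semistable model.

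In its place you try to rule out a contracted chain mapping into $\calD$ by arguing that this would ``force $Y \subset D$, contradicting the hypothesis.'' That implication is not justified and is not true: the generic fiber $Y$ meeting $D$ properly does not prevent a \emph{vertical} component of the flat limit $\calY_\fp$ from lying inside $\calD_\fp$, so no contradiction with $Y \not\subset D$ arises. (The paper sidesteps this issue entirely because what matters for the application, Corollary \ref{cor:descent1}, is only where the contracted chain is sent relative to $\calD_Y^s$ -- an integral point never reduces onto a component contained in $\calD$ in the first place.) You flag the last step as the main obstacle yourself, and indeed it is where the proof is missing: you need the nodal-intersection-plus-blow-up mechanism, not a global argument about $Y$ versus $D$.
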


\begin{proof} This is true for any component outside the double locus of every fiber of $(\calX, \calD)$ by Corollary \ref{cor:slcample}. Let $Z$ be a component of a fiber of $(\calY, \calD_\calY)$ which \emph{is} contained in the double locus of a fiber of $(\calX, \calD)$. If $Z \cap D = \emptyset$ the result is clear. Otherwise, by Proposition \ref{prop:involution} all of the points $Z \cap D$ are nodes of $D$. The strict transform of $Z$ in $(\calY^{ss}, \calD^{ss}_\calY)$ (obtained by blowing up these nodes) is thus disjoint from $D$, and therefore it is either not contracted, or it is contracted to a point not contained in $\calD$. \end{proof}

\begin{prop}\label{prop:rational2} Let $(\calX, \calD)$ be a good model of a log canonical stable surface pair such that every fiber has almost ample log cotangent. Let $(Y,E)$ be a unstable curve inside the generic fiber of $(\calX, \calD)$, and let $(\calY', \calD_{\calY'})$ be its induced model (see Definition \ref{def:induced_map}).  Then every positive dimensional component of every fiber of the stable reduction map $\calY' \to \calY''$ does not contain any marked points. \end{prop}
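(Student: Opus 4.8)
The plan is to run the same argument as for Proposition~\ref{prop:rational}, with Proposition~\ref{prop:stable_map} replacing Corollary~\ref{cor:slcample} as the input that locates the contracted components, and with Proposition~\ref{prop:involution} used to handle the markings lying over the double locus.

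First I would fix a prime $\fp$ and a positive dimensional component $Z\subset\calY'_\fp$ that is contracted by the stable reduction map $\sigma\colon\calY'\to\calY''$, so that $Z$ is a smooth rational curve carrying at most two special points of the fibre. As in the proof of Proposition~\ref{prop:stable_map}, $Z$ is unstable as an abstract pointed component while $F$ is a stable map, so $F|_Z$ cannot be constant; hence it is finite onto its image $F(Z)$, and Proposition~\ref{prop:stable_map} forces $F(Z)$ to lie either in $\calD$ or in the double locus of the fibre $\calX_\fp$. I would also record, from Construction~\ref{sec:construction}, that the marked points of $Y'$ are reduced, transverse preimages of $D$ along $f\colon Y'\to X$, so that $F$ carries the markings $\calD_{\calY'}$ into $\calD$.

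Now assume, for contradiction, that $Z$ contains a marked point $q$. In the principal case $F(Z)$ lies in the double locus of $\calX_\fp$: then $F(q)$ lies both in $\calD_\fp$ and in the double locus of $\calX_\fp$, so by Proposition~\ref{prop:involution}, applied to the slc surface $(\calX_\fp,\calD_\fp)$, it is a node of $\calD_\fp$. From here I would adapt the argument of Proposition~\ref{prop:rational}: separating such nodes of $\calD_\fp$, as in the passage to a semistable model there, carries the marking off the strict transform of $Z$, so that $q$ cannot in fact lie on $Z$. In the remaining case $F(Z)\subseteq\calD$, the rational curve $Z$ maps finitely onto a component of $\calD_\fp$, hence $Z$ is a vertical component of $F^{-1}(\calD)$; a marking on $Z$ would then contribute extra vertical multiplicity to $F^{*}\calD$, whereas the degree of $F^{*}\calD$ on each fibre is the constant $D\cdot Y$ already carried by the horizontal marking sections produced transversally in Construction~\ref{sec:construction}. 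Equivalently, after contracting the components mapped into $\calD$ one would be left with a contracted rational curve with at most two special points mapped to a single point, violating stability of the map. Either way $q$ cannot exist, and since $\fp$, $Z$ and $q$ were arbitrary, no positive dimensional component of any fibre of $\sigma$ contains a marked point.

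The step I expect to be the main obstacle is the double locus case. The transversality and smoothness statements about the markings are produced by Construction~\ref{sec:construction} only on the generic fibre, and one has to propagate enough of them to the special fibres of $\calY'$ before Proposition~\ref{prop:involution} can be applied; this is the analogue of the corresponding step in the proof of Proposition~\ref{prop:rational}, but it is a little more delicate here because $\calY'$ only carries a stable map to $(\calX,\calD)$ rather than sitting inside it as a subscheme.
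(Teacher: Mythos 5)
Your setup is right (locating the contracted components via Proposition \ref{prop:stable_map}, and using Proposition \ref{prop:involution} to see that markings over the double locus sit at nodes of $\calD_\fp$), but the mechanism you use to derive the contradiction in the principal case does not work. You write that ``separating such nodes \dots carries the marking off the strict transform of $Z$, so that $q$ cannot in fact lie on $Z$.'' That move is borrowed from Proposition \ref{prop:rational}, where $(\calY,\calD_\calY)$ sits inside $(\calX,\calD)$ and one is still free to pass to a semistable model by blowing up; here $\calY'$ is \emph{already} the fixed prestable model carrying the stable map $F$ and its marked sections, and the question is precisely whether the given marked point $q$ lies on the given component $Z$ of a fiber of $\calY'$. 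You cannot modify the model and then conclude that the original incidence did not occur; as stated the step is circular and proves nothing about $\calY'$.

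The ingredient you are missing is the counting identity built into Construction \ref{sec:construction}: the number of marked points carried by $Z$ equals $\deg\big(D \cap F(Z)\big)$. When $F(Z)$ lies in the double locus of $\calX_b$, Proposition \ref{prop:involution} says every point of $D$ on the double locus is a node of $D$, so this degree is either $0$ or at least $2$. Since $F|_Z$ is finite (stability of the map) and $Z$ is contracted by stable reduction, $Z$ is rational, and (after reducing to the case that the fiber is not irreducible) $Z$ meets the rest of the fiber in at least one node, which is not a marked point. If $Z$ carried any marked point the count would give at least $2+1=3$ special points on a rational component, so $Z$ would be stable and not contracted --- that is the contradiction. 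Your treatment of the case $F(Z)\subseteq\calD$ is also not right as written: the markings of $\calY'$ are the closures of the generic-fiber markings, not $F^{-1}(\calD)$, so there is no ``extra vertical multiplicity'' constraint on $F^{*}\calD$, and $Z$ is not contracted by $F$, so stability of the map is not violated either; the paper disposes of this case by appeal to Proposition \ref{prop:stable_map} (and in the application, Lemma \ref{lem:ms-sing}, such components are harmless because an $ms$-integral point cannot meet a component mapped into $\calD$).
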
 

\begin{proof} For components of $\calY'$ not mapped to the double locus of any fiber of $(\calX, \calD)$ the result was proven in Proposition \ref{prop:stable_map}. Suppose $Z$ is a component of a fiber $\calY'_b$ contracted by stable reduction which \emph{is} mapped to the double locus of a fiber of $(\calX, \calD)$. Since $F$ is a stable map, the restriction $F|_Z$ is a finite map. Since $Z$ gets contracted by stable reduction, we may assume that $g(Z) = 0$. By flatness we can further assume that $\calY'_b$ is not irreducible so that $Z$ has at least one special point (which is not a marked point). By Construction \ref{sec:construction}, the number of marked points on $Z$ is equal to $\deg\big(D \cap F(Z)\big)$. Since $F(Z)$ is contained in the double locus of $\calX_b$, by Proposition \ref{prop:involution} this number must be even.  If $\deg  = 0$ we are done, otherwise $\deg \geq 2$ but this will give at least two marked points, in addition to our special point which is not a marked point, finishing the proof. 
 \end{proof}

\begin{corollary}\label{cor:subvarietyprop} The $ms$-integral points on log canonical stable surface pairs with good model such that every fiber has almost ample log cotangent satisfy the subvariety property. \end{corollary}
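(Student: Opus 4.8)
The plan is to assemble the results of Sections~\ref{sec:subv}--\ref{sec:descent}. Fix a good model $(\calX,\calD)$ of the given log canonical stable surface pair $(X,D)$ and an $ms$-integral point $P$ with respect to it (after a finite extension of $K$ and enlargement of $S$, harmless by Remark~\ref{remark:models}); let $Y\subset X$ be an irreducible curve with $Y\not\subset D$ and $P\in Y\setminus D_Y$, where $D_Y=(D\cap Y)$. I would split into two cases according to whether the pair $(Y,D_Y)$ is a stable pointed curve.

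Suppose first that $(Y,D_Y)$ is stable. Form the induced model $(\calY,\calD_\calY)\subset(\calX,\calD)$ (Definition~\ref{def:inducedmodel}) and pass through its semistable model $(\calY^{ss},\calD_Y^{ss})$ to its stable model $(\calY^s,\calD_Y^s)$. This is the situation of Corollary~\ref{cor:descent1}, whose sole hypothesis was that no fiber of $(\calY,\calD_\calY)$ lies in the double locus of a fiber of $(\calX,\calD)$; this hypothesis is now removable. Components of fibers of $(\calY,\calD_\calY)$ not contained in a double locus are of log general type by Corollary~\ref{cor:slcample}, hence are not contracted by stable reduction (or, if contracted, carry no marking), exactly as in the proof of Corollary~\ref{cor:descent1}; and Proposition~\ref{prop:rational} disposes of the components $Z$ that do lie in a double locus, since the points of $Z\cap\calD$ are nodes of $\calD$ (Proposition~\ref{prop:involution}) and blowing them up to form $\calY^{ss}$ makes the strict transform of $Z$ disjoint from $\calD_Y^{ss}$, so that contracting it cannot push the image of $P$ onto $\calD_Y^s$. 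By Lemma~\ref{lem:ss} and Proposition~\ref{prop:no_contract}, applied exactly as in Corollary~\ref{cor:descent1}, $P$ is then integral in $(\calY^s,\calD_Y^s)$; since in dimension one a stable model is a good model (discussion after Definition~\ref{def:ms}), $P$ is $ms$-integral for $(Y,D_Y)$.

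Suppose next that $(Y,D_Y)$ is not stable ($Y$ is singular or $D_Y$ is non-reduced), and, as in Lemma~\ref{lem:ms-sing}, that $P$ lies in the smooth locus of $Y$. Replace $Y$ by the stable map $f\colon Y'\to X$ of Construction~\ref{sec:construction}, for which $\deg D_{Y'}=\deg D_Y$, and pass to its induced model $(\calY',\calD_{Y'})$ with stable map $F\colon\calY'\to(\calX,\calD)$ and stable reduction $\calY'\to\calY''$ (Definition~\ref{def:induced_map}, Proposition~\ref{prop:stable_map}). The point $P$ lifts to $Y'$ and, $\calY'$ being semistable, to an integral point of $\calY'$; it remains to check that no component of a fiber contracted by $\calY'\to\calY''$ through that lift carries a marking. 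Proposition~\ref{prop:stable_map} gives this for components whose $F$-image avoids the double locus of every fiber, and Proposition~\ref{prop:rational2} for the rest: such a contracted component is rational with at least one non-marked special point, whereas its number of markings equals $\deg(D\cap F(Z))$, which is even by Proposition~\ref{prop:involution} and therefore must be zero. Hence the lift of $P$ stays integral in $\calY''$, i.e.\ $P$ is stably integral for the stable pointed curve $(Y',D_{Y'})$ --- the subvariety property in this case.

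The genuinely hard work is already contained in Propositions~\ref{prop:rational} and~\ref{prop:rational2}: the obstacle is precisely the components of a degenerating curve that fall into the non-normal double locus of a degenerating fiber, where Corollary~\ref{cor:slcample} says nothing, and these propositions (via the node structure of $\calD$ along the double locus, Propositions~\ref{prop:kollar} and~\ref{prop:involution}) show that such components are harmless for the stable reduction of $P$. The corollary itself is then just the combination of these inputs with Corollary~\ref{cor:descent1} and Lemma~\ref{lem:ms-sing}; the one point requiring care is the identification of ``integral in the stable model'' with ``$ms$-integral for the resulting pointed curve'', which rests on the coincidence of good and stable models in dimension one and on the compatibility of the moduli-theoretic integrality with stable reduction (recorded after Definition~\ref{def:ms}). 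I expect no further difficulty.
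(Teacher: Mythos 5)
Your proposal is correct and follows exactly the paper's route: the paper's proof of this corollary is literally the one-line instruction to combine Corollary~\ref{cor:descent1} and Lemma~\ref{lem:ms-sing} with Propositions~\ref{prop:rational} and~\ref{prop:rational2}, and your case split (stable versus non-stable $(Y,D_Y)$, with the double-locus components handled by Propositions~\ref{prop:rational} and~\ref{prop:rational2} respectively) is precisely how those four results are meant to be assembled. Your write-up simply makes the combination explicit.
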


\begin{proof} Combine the results of Corollary \ref{cor:descent1} and Lemma  \ref{lem:ms-sing} with Propositions \ref{prop:rational} and \ref{prop:rational2}. 
\end{proof}

\section{Uniformity for surfaces}\label{sec:secmainthm}
We begin by proving uniformity results for surfaces.
    \begin{theorem}
\label{prop:unif_2} Assume the Lang-Vojta Conjecture.
Let $(X,D) \to B$ be a stable family of surface pairs over $K$ with integral, openly canonical, and log canonical general fiber over a smooth projective variety $B$. If the log cotangent of each fiber is almost ample, then the cardinality of the set of $ms$-integral points of $(X_b,D_b)$ is uniformly bounded for all $b$ in $B(K)$. 
\end{theorem}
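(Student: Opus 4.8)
The plan is to follow the template of the curve case (Theorem~\ref{th:curves}): first bound the \emph{degree} of a subscheme carrying all $ms$-integral points, then use the subvariety property to transfer the $ms$-integral points lying on curve components onto genuine stable pointed curves of bounded invariants, and finally invoke the uniformity result for curves.

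First, the family $(X,D)\to B$ satisfies exactly the hypotheses of Corollary~\ref{cor:subscheme}, so that result supplies a constant $N_{0}$ and, for every $b\in B(K)$, a proper closed subscheme $A_{b}\subsetneq X_{b}$ with $X_{b}(\calO_{K,S}^{ms})\subseteq A_{b}$ whose irreducible components have total degree $\le N_{0}$ with respect to a fixed relatively very ample line bundle on $X/B$. Writing $A_{b}=Z_{b}\cup Y_{b,1}\cup\dots\cup Y_{b,k_{b}}$ with $Z_{b}$ the zero-dimensional part and each $Y_{b,i}$ a reduced curve, one has $k_{b}\le N_{0}$, $\#Z_{b}(K)\le N_{0}$ and $\deg Y_{b,i}\le N_{0}$. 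The $K$-points on $Z_{b}$, the $K$-points on a geometrically reducible $Y_{b,i}$ (which must lie in the geometric intersection of the Galois-conjugate components, a zero-dimensional scheme of degree bounded in terms of $N_{0}$), and the curve components contained in $D_{b}$ (which carry no $ms$-integral point) contribute at most $O(N_{0})$ points in total; so it remains to bound, uniformly in $b$ and $i$, the $ms$-integral points of $(X_{b},D_{b})$ lying on a geometrically irreducible curve $Y=Y_{b,i}$ with $Y\not\subseteq D_{b}$.

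Next I would invoke Corollary~\ref{cor:subvarietyprop}, whose hypotheses are precisely those of the present theorem applied to the good model of each fibre: $ms$-integral points satisfy the subvariety property. Concretely, passing to the pair $(Y,D_{Y})$ with $D_{Y}=(Y\cap D_{b})_{red}$, and then to the stable pointed curve $(Y'',D_{Y''})$ obtained from $(Y,D_{Y})$ by normalization, by the induced stable map of Proposition~\ref{prop:stable_map} and Construction~\ref{sec:construction} when $(Y,D_{Y})$ is not already stable, and by stable reduction, every $ms$-integral point of $(X_{b},D_{b})$ lying on $Y$ yields a stably $S$-integral point of $(Y'',D_{Y''})$; the combination of Corollary~\ref{cor:descent1}, Lemma~\ref{lem:ms-sing} and Propositions~\ref{prop:rational}, \ref{prop:rational2} (together with Proposition~\ref{prop:involution}) is what guarantees this even when $Y$ degenerates into the double locus of a non-normal fibre of the good model, since the contracted components of the induced model either map to $\calD$ — which the point avoids, being $ms$-integral — or carry no marked point. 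Moreover $g(Y'')$ and $\#D_{Y''}$ are bounded in terms of $N_{0}$ and the fixed family alone: $\deg Y\le N_{0}$ bounds $p_{a}(Y)$, the finitely many blow-ups and covers in Construction~\ref{sec:construction} raise the genus by a bounded amount, and $\#D_{Y''}=\deg(Y\cap D_{b})\le(\deg D_{b})(\deg Y)$ with $\deg D_{b}$ constant along the flat family. Thus $\big(g(Y''),\#D_{Y''}\big)$ ranges over a finite set, and Theorem~\ref{th:curves} bounds the number of stably $S$-integral points of every such $(Y'',D_{Y''})$ by the maximum $N_{1}=N_{1}(K,S)$ of the finitely many constants $N(K,S,g,\deg D)$ that arise. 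Summing over the at most $N_{0}$ curve components and adding the $O(N_{0})$ points from the first step gives
\[
\#\big((X_{b}\setm D_{b})(\calO_{K,S}^{ms})\big)\ \le\ N\ :=\ C\,N_{0}+N_{0}N_{1},
\]
for an absolute constant $C$, independent of $b\in B(K)$, which is the assertion.

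The main obstacle is the transfer step, and it is exactly why Sections~\ref{sec:subv}--\ref{sec:descent} are needed: without the control on contractible components supplied by the almost ampleness of the log cotangent of every fibre, an $ms$-integral point lying on a curve $Y\subseteq X_{b}$ need not remain integral after stable reduction of the induced model, so the subvariety property — and with it the inductive reduction to Theorem~\ref{th:curves} — could fail; and the curve components of $A_{b}$ that degenerate into the double locus of a non-normal fibre of the good model require the separate analysis through the moduli of stable maps and the structure of the different. A secondary point to check carefully is that all the field extensions and choices of good model implicit in ``$ms$-integral'' and in Definition~\ref{def:stab_c} are already absorbed by the uniform bound of Theorem~\ref{th:curves}, which is stated for the set of stably $S$-integral points over the base field itself.
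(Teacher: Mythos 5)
Your proposal is correct and follows essentially the same route as the paper's proof: a degree-bounded subscheme $A_b$ from the fibered power theorem, a split into zero- and one-dimensional parts, the subvariety property (Corollary \ref{cor:subvarietyprop}) to convert $ms$-integral points on curve components into stably integral points on stable pointed curves of boundedly many topological types, and then Theorem \ref{th:curves}. The only (harmless) packaging difference is that you invoke Corollary \ref{cor:subscheme}, which already contains the Noetherian induction on the base, whereas the paper starts from Theorem \ref{th:deg_bound} and performs that induction at the end of the argument.
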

\begin{proof}
By Theorem \ref{th:deg_bound} there exists a closed subset $A_b$ containing all the $ms$-integral points, for every fiber $(X_b,D_b)$ and every rational point $b \in U \subset B$, where $U$ is an open subset of $B$. Moreover $d_b = \deg A_b$ is uniformly bounded by a constant $N$ (depending on the invariants defining the moduli space of stable pairs where the fibers of the family lie). We can write $A_b = A_{0,b} \cup A_{1,b} = A_0 \cup A_1$ where $A_0$ is the part of pure dimension zero and $A_1$ is the part of pure dimension one. Since $\deg A_b = d_b < N$ it follows that $A_0$ contains at most $N$ points.

 The dimension one part $A_1$, consists of irreducible curves $\{ \calC_i \}_{i\in I} \subset A_b$ for a finite set $I$ whose cardinality is uniformly bounded by $N$. We restrict our attention to the subset $J \subset I$ of curves $\calC_j$ which are not contained in $D_b$; note that the curves $\calC_i$ with $i \in I \setminus J$ do not contain any $ms$-integral points since they are contained in $D_b$. 
 
 For every $j \in J$, we let $E_j = (C_j \cap D_b)$. Then Corollary \ref{cor:subvarietyprop} implies that every $ms$-integral point of $(X_b, D_b)$ lying on $(C_j, E_j)$ is a stably-integral point. Moreover, since $N$ is uniformly bounded, there are only finitely many choices for the genus of the $\calC_j$,the degree of $E_j$, and the number of singular points of $C_j$. Applying Theorem \ref{th:curves} to the curves gives a bound on the maximum number of $ms$-integral points in the union of $\{ \calC_j \}$ for $j \in J$, which does not depend on $b$.

 Therefore the total number of $ms$-integral points in $A_b$ is uniformly bounded for every $b \in U$.

 To obtain the result for all fibers we apply Noetherian induction on the base: let $N_0$ be the bound obtained above for the cardinality of the set of $ms$-integral points on every fiber $X_b$ with $b$ lying in $U \subset B$. Let $B_1$ be the union of all irreducible components of $B \setm U$ whose generic point is openly log-canonical and consider the restricted family $(X_1,D_1) \to B_1$. Applying the procedure described in this proof to this new family gives a bound $N_1$ for the fiber lying in an open set $U_1 \subset B_1$. This process gives a sequence of $B_i$ which by Noetherian induction, since $\dim B_i > \dim B_{i+1}$, exhausts all points of $B$ corresponding to log-canonical stable pairs in a finite number of steps. Then $N = \max N_i$ is a uniform bound for the cardinality of the set of $ms$-integral points on every fiber.
\end{proof}

\begin{corollary}
\label{cor:unif_2}
Assume the Lang-Vojta Conjecture. Let $(X,D)$ be a log canonical stable surface pair with $D$ a $\Q$-Cartier divisor and good model $(\calX, \calD) \to B$. Suppose that each fiber of $(\calX, \calD)$ has almost ample log cotangent. 
Then there exists a constant $N = N(K,S,v)$ where $v$ is the volume of $(X,D)$, such that the set of $ms$-integral points of $(X,D)$ has cardinality at most $N$, i.e.
  \[
    \# (X \setm D)(\calO_{K,S}^{ms}) \leq N = N(K,S,v)
  \]
\end{corollary}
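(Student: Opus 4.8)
The plan is to deduce this single-pair statement from the family version, Theorem~\ref{prop:unif_2}, by the ``global family'' device of \cite{CHM}, exactly as Theorem~\ref{th:curves} is deduced from Proposition~\ref{prop:stably_curves_correl}: realize $(X,D)$ as one fibre of a single stable family over a smooth projective base whose fibres exhaust the stable surface pairs of volume $v$ with almost ample log cotangent, and then read off the uniform bound for that family.

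First I would fix the moduli data. Take $n=2$, let $v$ be the volume of $(X,D)$ and $I$ a fixed DCC coefficient set containing the coefficients of $D$, and set $\Gamma=(2,v,I)$. Consider the moduli space $\calM_\Gamma$ of stable surface pairs with these invariants --- which has projective coarse space by \cite{kp} (Remark~\ref{rmk:moduli}) --- together with its universal family $(\calU,\calD_\calU)$ from Appendix~\ref{sec:stacks}. By Appendix~\ref{app:sheaves} there is a locally closed stratification of $\calM_\Gamma$ over whose strata the universal log cotangent sheaf is almost ample; in particular the locus $\calM_\Gamma^{aa}\subseteq\calM_\Gamma$ of pairs with almost ample log cotangent is a finite union of locally closed subschemes, and the hypothesis (that the fibres of the good model, in particular the generic fibre $(X,D)$, have almost ample log cotangent) places $[(X,D)]$ in $\calM_\Gamma^{aa}(K)$. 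Passing to a dense open subscheme of an irreducible component of $\calM_\Gamma^{aa}$ through $[(X,D)]$ along which the general pair is in addition integral, openly canonical and log canonical, and then choosing a smooth projective variety $B$ with a dominant generically finite morphism onto the closure of that component and lifting to the moduli stack, I would pull back the universal family to obtain a stable family $(\calX_B,\calD_B)\to B$ whose general fibre is integral, openly canonical, log canonical with almost ample log cotangent and all of whose fibres have almost ample log cotangent by construction of the stratification.

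Next I would observe that $(X,D)$, a stable surface pair with invariants $\Gamma$ and almost ample log cotangent, occurs as the fibre $X_b$ of $(\calX_B,\calD_B)\to B$ over some $b\in B(L)$ with $L\supseteq K$ a finite extension; replacing $S$ by the set $S_L$ of places above $S$ changes nothing for the count of $ms$-integral points (Definition~\ref{def:ms}). Moreover, up to the choices of models of the stacks and a further finite extension, the good model of $(X,D)$ is the base change of the universal family through $b$ (Definition~\ref{def:good}), so by Remark~\ref{remark:models} the cardinality of $(X\setminus D)(\calO_{L,S_L}^{ms})$ is unaffected by these choices up to adjusting constants. Applying Theorem~\ref{prop:unif_2} to $(\calX_B,\calD_B)\to B$ --- all of whose hypotheses hold by construction --- yields a constant, depending only on $L$, $S_L$ and the invariants $\Gamma$, that bounds $\#(X_{b'}\setminus D_{b'})(\calO_{L,S_L}^{ms})$ simultaneously for every $b'\in B(L)$; the Noetherian induction inside that theorem is what makes this uniform over \emph{all} $b'$, including non-general ones such as our $b$. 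Taking $b'=b$ and absorbing the dependence on $L$, $S_L$ and $I$ --- using that stable surface pairs of fixed volume form a bounded family, so only finitely many such $\Gamma$ occur --- gives the bound $\#(X\setminus D)(\calO_{K,S}^{ms})\le N(K,S,v)$ asserted.

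The main obstacle is the family construction: one must know that ``almost ample log cotangent'' is a well-behaved condition in the family over $\calM_\Gamma$ --- that it carves out a constructible locus over which, on a suitable stratification, the universal log cotangent sheaf is genuinely almost ample --- so that \emph{every} fibre of $(\calX_B,\calD_B)\to B$, not merely the general one, satisfies the hypothesis of Theorem~\ref{prop:unif_2}. This is precisely the content of Appendix~\ref{app:sheaves}. The remaining steps --- descending from the moduli stack to an honest stable family over a smooth projective base, and checking that the final constant depends only on $v$ rather than on the chosen component, stratum or field extension --- are comparatively routine, the latter resting on boundedness of stable surface pairs of fixed volume.
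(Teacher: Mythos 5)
Your proposal is correct and follows essentially the same route as the paper, which simply applies Theorem~\ref{prop:unif_2} to the tautological family over the moduli space $\calM_\Gamma$ of stable surface pairs of volume $v$ (with coefficient set $I=\{1\}$). Your version spells out the details the paper leaves implicit — the stratification from Appendix~\ref{app:sheaves} ensuring almost ampleness on every fibre, the passage to a smooth projective base, and the independence of constants from the choice of model — but the underlying argument is the same.
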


\begin{remark}\label{rmk:higherdim} Corollary \ref{cor:unif_2} can be extended to higher dimensions considering the set of $ms$-integral points lying on stable subvarieties. However in that case the conclusion is too weak, since on a variety of $\dim > 1$ there is a dense collection of non-stable subvarieties.\end{remark}

\begin{proof}
It is enough to apply Theorem \ref{prop:unif_2} to the tautological family of the moduli space $\calM_\Gamma$ of stable pairs of dimension 2, volume $v$ and coefficient set $I = \{ 1 \}$ (see \cite{kp}).
\end{proof}

\appendix
\section{Stack of stable pairs over $\Q$}\label{sec:stacks}

Suppose that we fix any moduli functor $\mathfrak{F}$ of stable log varieties of fixed dimension, volume and coefficient set as in \cite{kp};  Let
$
  (\calU, \calD) \to \calM_\Gamma
$
be the universal family, universal divisor and moduli stack representing the functor $\mathfrak{F}$. Before dealing with models we first need to prove that $\calM_\Gamma$ can be defined over $\Q$, and is still a Deligne-Mumford stack with projective coarse moduli space.

The main theorem of \cite{kp} states that the moduli stack $\calM_\Gamma$ exists over an algebraically closed field of characteristic 0. Here we show the existence and projectivity of the moduli space over $\Q$. Throughout this section, we denote the Deligne-Mumford moduli stack over $\overline{\Q}$, by $\calM_{\Gamma,\overline{\Q}}$. Define $\calM_{\Gamma, \Q}$ to be the moduli stack of stable pairs over $\Q$. The first result shows that this stack is Deligne-Mumford.

\begin{theorem}
\label{th:Q} 
The moduli stack $\calM_{\Gamma, \Q}$ of stable pairs over $\Q$ is Deligne-Mumford. 
\end{theorem}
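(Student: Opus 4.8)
The plan is to reduce everything to the case over $\overline{\Q}$ established in \cite{kp}, exploiting that the moduli problem is visibly defined over $\Q$ and is of finite presentation. First I would record that the pseudofunctor $\calM_\Gamma$ on $(\mathrm{Sch}/\Q)$ sending a $\Q$-scheme $T$ to the groupoid of stable families $(\calX,\calD)\to T$ with invariants $\Gamma=(n,v,I)$ is an \'etale stack: this is fppf descent for flat proper morphisms together with their relative $\Q$-Cartier divisors and $\Q$-line bundles. By construction its base change to $\overline{\Q}$ is the Deligne--Mumford stack $\calM_{\Gamma,\overline{\Q}}$ of \cite{kp}.

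For algebraicity I would use a Hilbert-scheme presentation. By the boundedness statement in \cite{kp} there is an integer $m=m(\Gamma)$, uniform over the moduli problem, so that for every stable pair $(X,D)$ with invariants $\Gamma$ over a field the reflexive power $\omega_X(D)^{[m]}$ is very ample with vanishing higher cohomology and fixed Hilbert polynomial $h$; since very ampleness and the relevant vanishing are geometric, the same holds over $\Q$. Hence $\calM_\Gamma\cong[H^{\mathrm{st}}/\PGL_{N+1}]$, where $H^{\mathrm{st}}$ is the locus inside $\mathrm{Hilb}^h(\PP^N_\Q)$ parametrizing such embedded stable pairs; the Hilbert scheme and $\PGL_{N+1}$ are defined over $\Q$. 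The only input needed from \cite{kp} is that $H^{\mathrm{st}}$ is \emph{locally closed} --- equivalently, that being slc with ample log canonical class is a locally closed condition in families. This is proved over $\overline{\Q}$ in \cite{kp}, and since $H^{\mathrm{st}}_{\overline{\Q}}$ is of finite presentation and $\overline{\Q}=\varinjlim_K K$ over number fields, it descends to a locally closed subscheme of $\mathrm{Hilb}^h(\PP^N_K)$ for some $K$, hence to $\Q$ by faithfully flat (Galois) descent along $\Spec K\to\Spec\Q$. Thus $\calM_\Gamma$ is an algebraic stack of finite type over $\Q$, with diagonal representable by the $\mathrm{Isom}$-schemes of polarized stable pairs; these are finite because $K_X+D$ is ample, so $\calM_\Gamma$ is also separated. (Alternatively, algebraicity and separatedness descend directly from $\calM_{\Gamma,K}$ along $\Spec K\to\Spec\Q$ by the limit results for algebraic stacks.)

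That $\calM_\Gamma$ is Deligne--Mumford is then a characteristic-zero matter: the automorphism group scheme of a stable pair over a field of characteristic $0$ is a proper closed subgroup scheme of $\PGL_{N+1}$, hence finite, hence \emph{\'etale}; therefore the inertia, equivalently the diagonal, is unramified. Finally, by Keel--Mori there is a coarse moduli space $M_\Gamma$, a separated algebraic space of finite type over $\Q$, and its formation commutes with the flat base change $\Spec\overline{\Q}\to\Spec\Q$, so $M_{\Gamma,\overline{\Q}}$ is the projective scheme constructed in \cite{kp}. To descend projectivity I would exhibit a $\Q$-rational ample line bundle: the determinant $\lambda=\det\pi_*\big(\omega_{\calU/\calM_\Gamma}(\calD)^{[km]}\big)$ for $k\gg0$ descends to a line bundle on $M_\Gamma$ defined over $\Q$, and its ampleness --- being fppf-local for a line bundle on a finite-type scheme --- may be checked after base change to $\overline{\Q}$, where it is the ampleness established in \cite{kp}. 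Hence $M_\Gamma$ is projective over $\Q$.

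The hard part will \emph{not} be any of these formal steps but the single geometric fact that the stable locus $H^{\mathrm{st}}$ is locally closed over an arbitrary base --- i.e. that ``slc with ample $K+D$'' is a locally closed condition in families. I do not reprove it; the point of the argument is that this fact, and with it algebraicity, separatedness, and projectivity of the coarse space, descends from \cite{kp} because the moduli problem is of finite presentation and $\overline{\Q}$ is a filtered colimit of number fields. Everything else --- stackiness, the quotient presentation, finiteness and \'etaleness of automorphism group schemes, Keel--Mori --- is formal and insensitive to the ground field in characteristic zero.
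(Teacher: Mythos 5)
Your argument is correct in outline, but it takes a genuinely different route from the paper's. The paper descends the stack itself: it writes $\overline{\Q}=\varprojlim_\alpha S_\alpha$ over finite extensions, invokes Olsson's limit result to spread the Deligne--Mumford stack $\calM_{\Gamma,\overline{\Q}}$ of Koll\'ar--Patakfalvi out to a stack $\calM_{\Gamma,S_\alpha}$ over some number field, and then shows that composing an \'etale atlas $U\to\calM_{\Gamma,S_\alpha}$ with the (\'etale, surjective, representable) projection $\calM_{\Gamma,S_\alpha}\to\calM_{\Gamma,\Q}$ yields an atlas over $\Q$. You instead descend a \emph{presentation}: the Hilbert scheme and $\PGL_{N+1}$ already live over $\Q$, so the only object you need to descend is the locally closed stable locus $H^{\mathrm{st}}$, which you handle by finite presentation plus Galois descent, and you then argue Deligne--Mumfordness directly from finiteness (hence \'etaleness, in characteristic zero) of automorphism group schemes. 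Your approach is more concrete and makes visible exactly which geometric input from \cite{kp} is being reused (local closedness of the slc-with-ample-$K+D$ locus), at the cost of implicitly assuming the quotient-stack description of the moduli functor over an arbitrary $\Q$-scheme (in particular Koll\'ar's condition ensuring that $\pi_*\omega^{[m]}(\calD)$ is locally free and commutes with base change, so that families embed \'etale-locally); the paper's approach is shorter and purely formal but leans entirely on Olsson's spreading-out machinery. When invoking Galois descent for $H^{\mathrm{st}}$ you should say explicitly that the locus is Galois-stable because the defining condition is independent of the field of definition; with that remark added the argument is complete. Your final paragraph on Keel--Mori and projectivity of the coarse space is not needed for this statement --- it is the content of the paper's separate Theorem on the projective coarse space, which the paper proves by descending an ample line bundle via a Galois-invariant tensor product, essentially as you sketch.
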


\begin{proof}
Let $S = \Spec{\overline{\Q}}$ and let $(S_{\alpha}, u_{\alpha \beta})$ be a projective system of schemes so that $S = \varprojlim_{\alpha} S_{\alpha}$ and all $S_{\alpha} = \Spec K_{\alpha}$, where $K_{\alpha}$ is a finite extension of $\Q$. By \cite[Proposition 5.11]{kp}, the stack $\calM_{\Gamma, \overline{\Q}}$ of stable pairs is a Deligne-Mumford stack of finite type over $S$.  By \cite[Proposition 2.2]{Olsson}, there exists a Deligne-Mumford stack of finite type $\calM_{\Gamma, S_{\alpha}}$ over some $S_{\alpha}$ and an isomorphism $\calM_{\Gamma, \overline{\Q}} \cong \calM_{\Gamma, S_{\alpha}} \times_{S_{\alpha}} \Spec{\overline{\Q}}$ (if necessary, replace $S_{\alpha}$ so that $\calM_{\Gamma, S_{\alpha}}$ is a moduli stack). 

Since $\calM_{\Gamma, S_{\alpha}}$ is Deligne-Mumford, there is a surjective \'etale morphism $u: U \to \calM_{\Gamma, S_{\alpha}}$ where $U$ is a scheme.  We now wish to prove that $\calM_{\Gamma, \Q}$ is a Deligne-Mumford stack. To do so, consider the surjective morphism $\psi: U \to \calM_{\Gamma, S_{\alpha}} \to \calM_{\Gamma, \Q}$. By \cite[Tag05UL]{stacks-project}, $\calM_{\Gamma, \Q}$ is a Deligne-Mumford stack if the morphism $\psi$ is surjective, smooth, and representable by algebraic spaces. 

Then the morphism $\psi$ is smooth and surjective, since the morphism $u$ is smooth and surjective, and the morphism $\phi: \calM_{\Gamma, S_{\alpha}} \to \calM_{\Gamma, \Q}$ is \'{e}tale and surjective since the map $S_{\alpha} \to \Spec \Q$ is. To show representability of $\psi$, it suffices to check representability of $\phi$ as we know that the map $u$ is representable. Consider the following diagram, where $T$ is an algebraic space, and both squares are fiber product diagrams. 
\[ 
  \begin{tikzcd} \widetilde{T} \ar[r]^{} \ar[d]_{} & T \ar[d]^{} \\ 
  \calM_{\Gamma, S_{\alpha}} \ar[r]^{\phi} \ar[d] & \calM_{\Gamma, \Q} \ar[d] \\
   S_{\alpha} \ar[r]^{} & \Spec\Q \end{tikzcd}
\] To show that $\phi$ is representable, we must show that the fiber product $\widetilde{T}$ is an algebraic space whenever $T$ is. Since both squares are fiber products, $\widetilde{T} \cong S_{\alpha} \times_{\Spec \Q} T$ is also a fiber product, and is thus an algebraic space since it is the base change of an algebraic space by a field extension.
\end{proof}

For Appendix \ref{app}, where we give an alternative approach to models of stable pairs, we need to appeal to the projective coarse moduli space over $\Q$, so we show its projectivity. 

\begin{theorem}
\label{th:proj_coarse}
The coarse moduli space $M_{\Gamma, \Q}$ is a projective variety.
\end{theorem}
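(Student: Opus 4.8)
The plan is to deduce projectivity over $\Q$ from the known projectivity over $\overline{\Q}$ by Galois descent, exploiting that the coarse space is formed compatibly with base change and that the polarization produced in \cite{kp} is functorial, hence already defined over $\Q$.

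First I would upgrade Theorem \ref{th:Q}: the stack $\calM_{\Gamma,\Q}$ is not only Deligne--Mumford but also proper over $\Q$. This can be checked after passing to the number field $K_\alpha$ of Theorem \ref{th:Q} and then to $\overline{\Q}$, where $\calM_{\Gamma,\overline{\Q}}$ is proper by the main theorem of \cite{kp} (properness is local on the base for faithfully flat quasi-compact morphisms). In particular the diagonal of $\calM_{\Gamma,\Q}$ is finite, so by the Keel--Mori theorem it admits a coarse moduli space $M_{\Gamma,\Q}$ which is an algebraic space proper over $\Q$, and whose formation commutes with the flat base change $\Spec\overline{\Q}\to\Spec\Q$. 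Hence $M_{\Gamma,\Q}\times_{\Spec\Q}\Spec\overline{\Q}\cong M_{\Gamma,\overline{\Q}}$, which is a projective variety by \cite{kp}.

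Next I would produce an ample line bundle on $M_{\Gamma,\Q}$. The projectivity in \cite{kp} is witnessed by a Koll\'ar-type (CM) line bundle on the coarse space, built as a determinant of a pushforward along the universal family of a suitable power of $\omega_{\calU/\calM_\Gamma}(\calD)$; this construction is functorial in the base, so carrying it out over $\Q$ yields a line bundle $\lambda$ on $M_{\Gamma,\Q}$ whose pullback to $M_{\Gamma,\overline{\Q}}$ is the ample bundle of \cite{kp}. Ampleness descends along the faithfully flat morphism $\Spec\overline{\Q}\to\Spec\Q$, so $\lambda$ is ample. Since a proper algebraic space over a Noetherian ring that carries an ample invertible sheaf is automatically a projective scheme, $M_{\Gamma,\Q}$ is a projective $\Q$-scheme; and it is reduced, since $M_{\Gamma,\overline{\Q}}$ is a variety and $\overline{\Q}/\Q$ is separable.

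I expect the main obstacle to be the second step: one must verify that the polarization coming from \cite{kp} is genuinely canonical --- defined functorially on the moduli stack and descending to the coarse space --- rather than an arbitrary ample class, since only then is it automatically $\Gal(\overline{\Q}/\Q)$-invariant and defined over $\Q$. The remaining points --- that Keel--Mori coarse spaces commute with the base changes in question, and that properness of the $\Q$-stack descends from $\overline{\Q}$ --- are standard.
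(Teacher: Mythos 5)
Your overall strategy (descend projectivity from $\overline{\Q}$ to $\Q$) is the right one, but the step you yourself flag as "the main obstacle" is a genuine gap, and it is exactly the step the paper's proof is engineered to avoid. You reduce everything to the claim that the polarization of \cite{kp} is a canonical, functorially defined determinant line bundle that already lives on $M_{\Gamma,\Q}$ and pulls back to the ample bundle over $\overline{\Q}$. That claim is not verified in your argument, and it is not free: the ample class produced in \cite{kp} is constructed on the stack (and its ampleness on the coarse space involves passing to powers killing stabilizer actions, Koll\'ar's ampleness lemma, etc.), so one would have to check that every choice made there is Galois-equivariant and that the resulting bundle descends to the coarse space over $\Q$. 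As written, your proof proves the theorem only conditionally on this unestablished functoriality. The surrounding scaffolding --- properness descending along $\Spec\overline{\Q}\to\Spec\Q$, Keel--Mori coarse spaces commuting with flat base change, ampleness descending along faithfully flat base change, and a proper algebraic space with an ample invertible sheaf being a projective scheme --- is all fine.

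The paper closes this gap differently, and more robustly: it does not assume the ample line bundle $\calL$ on $M_{\Gamma,\overline{\Q}}$ is canonical in any sense. Instead it spreads $\calL$ out, via the limit arguments of \cite{Olsson}, to a model $M_{\Gamma,S_\gamma}$ over a finite extension $K_\gamma/\Q$ (checking ampleness there by cohomology-and-base-change), and then forms the Galois average $\widetilde{\calL}=\bigotimes_{g\in G}\calL^{g}$ over $G=\Gal(K_\gamma/\Q)$. This tensor product is ample and $G$-invariant by construction, hence descends by Galois descent to a line bundle on $M_{\Gamma,\Q}$, which is ample because $M_{\Gamma,S_\gamma}\to M_{\Gamma,\Q}$ is finite. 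The averaging trick is precisely what lets one work with an \emph{arbitrary} ample class rather than a canonical one. If you want to salvage your route, you would need to supply the functoriality argument for the CM-type bundle of \cite{kp}; otherwise the spreading-out-plus-Galois-norm argument is the shorter path.
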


\begin{proof}
By \cite[Corollary 6.3]{kp}, the coarse moduli space $M_{\Gamma, \overline{\Q}}$ is a projective variety.  Let $\calL$ denote the ample line bundle on $\calM_{\Gamma, \overline{\Q}}$. While $\calL$ may not be defined over $\calM_{\Gamma, S_{\alpha}}$, by  \cite[Proposition 2.2]{Olsson}, we obtain some $S_{\beta}$ so that $\calL \to \calM_{\Gamma, \overline{\Q}}$ is defined over $S_{\beta}$, where $S_{\beta}$ again corresponds to a finite extension of $\Q$. Taking an algebraic extension containing the fields corresponding to both $S_{\alpha}$ and $S_{\beta}$, call the corresponding affine scheme $S_{\gamma}$, we obtain a stack $\calM_{\Gamma, S_{\gamma}}$ where the ample line bundle $\calL$ is defined.
Now we wish to show that $\calL$ is an ample line bundle exhibiting the coarse moduli space $M_{\Gamma, S_\gamma}$ as a projective variety. There Serre criterion for ampleness tells us that the higher cohomology vanishes on $M_{\Gamma, S_\gamma}$ for a high enough power of $\calL$. By \cite[Proposition 2.1]{conrad}, there is an isomorphism between the higher cohomology groups of $M_{\Gamma, \overline{\Q}}$ and $M_{\Gamma, S_\gamma}$, and thus shows that ampleness of the line bundle descends to $M_{\Gamma, S_\gamma}$.  As a result, the coarse moduli space $M_{\Gamma, S_{\gamma}}$ is a projective variety. Using Galois descent, we show that projectivity descends to $M_{\Gamma, \Q}$. 

Suppose $M_{\Gamma, S_{\gamma}}$ is defined over a field $K_{\gamma}$. Let $G$ be the finite Galois group corresponding to the finite extension $K_{\gamma} / \Q$. Then $\widetilde{\calL} =  (\displaystyle\otimes_{g \in G} \calL^g)$ gives a Galois invariant line bundle on $M_{\Gamma, S_{\gamma}}$, where $\calL^g$ denotes the pullback of the line bundle $\calL$ through the isomorphisms induced by $g \in G$. Since $\calL^g$ are ample line bundles for all $g \in G$, we see that the line bundle $\widetilde{\calL}$ is a Galois invariant ample line bundle. Moreover, by Galois descent, the Galois invariant line bundle $\widetilde{\calL}$ is pulled back from a line bundle $\calL'$ on $M_{\Gamma, \Q}$. Since the morphism $M_{\Gamma, S_{\gamma}} \to M_{\Gamma, \Q}$ is finite, the line bundle $\calL'$ is ample. This line bundle thus gives the desired projectivity of the coarse moduli space of $M_{\Gamma, \Q}$.
\end{proof}

\section{Sheaves on the universal family}\label{app:sheaves}
The goal of this section is to show that an almost ample log cotangent sheaf can be defined on the level of the universal family over the moduli stack of stable pairs.

\subsection{Simultaneous normalizations}
We begin by finding a sheaf on the normalization of fibers of the universal family, and to do so we use simultaneous normalizations of Chiang-Hsieh and Lipman \cite{ch} or Koll\'ar \cite{kolnorm}.  We recall the definition of a simultaneous normalization. 

\begin{definition}\label{def:normal} Let $f : X \to B$ be a morphism. A \emph{simultaneous normalization} of
$f$ is a morphism $\overline{\nu}: \overline{X}^\nu \to X$ such that: 
\begin{enumerate}
\item  $\overline{\nu}$ is finite and an isomorphism at the generic points of the fibers of $f$, and
\item $\overline{f} := f \circ \overline{\nu} : \overline{X}^\nu \to b$ is flat with geometrically normal fibers. \end{enumerate} \end{definition}

\begin{theorem}\cite[Theorem 12]{kolnorm}\label{thm:norm} Let $B$ be semi-normal, and let $f : X \to B$ be a projective morphism with generically reduced fibers of $\dim X_b = n$. The following are equivalent:
\begin{enumerate}
\item $X$ has a simultaneous normalization $\nu: \overline{X}^\nu \to X$
\item The Hilbert polynomial of the normalization of the fibers $\chi(X_B, \calO(tH))$ is
locally constant.
\end{enumerate}
\end{theorem}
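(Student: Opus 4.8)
The plan is to recall Koll\'ar's argument from \cite[Theorem 12]{kolnorm} (compare also \cite{ch}); both implications are essentially formal once the correct semicontinuity input is in place, and the only genuinely delicate point is the construction of $\overline{X}^\nu$ in $(2)\Rightarrow(1)$. For $(1)\Rightarrow(2)$ I would first argue that a simultaneous normalization restricts fiberwise to the normalization of $X_b$: by hypothesis $\overline{f}=f\circ\overline{\nu}$ has geometrically normal fibers, while $\overline{\nu}$ is finite and an isomorphism over a dense open of each $X_b$ containing its generic points, so the finite birational morphism $\overline{X}^\nu_b\to X_b$ from a normal scheme realizes the normalization of $X_b$. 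Since $\overline{f}$ is flat and projective, the function $b\mapsto\chi(\overline{X}^\nu_b,\calO(tH))$ is locally constant on $B$ for every $t$, which is exactly $(2)$.

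For $(2)\Rightarrow(1)$ the candidate is the normalization $\nu:X^\nu\to X$ of the total space (after reducing to $X$ reduced, which is harmless since $B$ is reduced and the fibers are generically reduced), together with $f^\nu:=f\circ\nu$. One checks that $\nu$ is finite and, over a dense open subset $B^\circ\subseteq B$, is fiberwise the normalization, so $X^\nu\to X$ is an isomorphism over a dense open of every fiber. The two remaining points are that $f^\nu$ is flat and that its geometric fibers are normal. Here I would compare the Hilbert polynomials of $X_b$, of its normalization $\overline{X_b}$, and of the fiber $X^\nu_b$ of the normalization of the total space, using the natural finite morphisms relating them and the associated short exact sequences: all three functions of $b$ agree on $B^\circ$, the one attached to $X^\nu_b$ is upper semicontinuous, and the one attached to $\overline{X_b}$ is locally constant by assumption. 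Over the reduced base $B$ this forces $b\mapsto\chi(X^\nu_b,\calO(tH))$ to be locally constant, hence $f^\nu$ flat, and simultaneously forces $X^\nu_b=\overline{X_b}$, i.e. the fibers of $f^\nu$ are geometrically normal. Finally, semi-normality of $B$ (rather than mere reducedness) is what allows this to be assembled into an honest scheme $\overline{X}^\nu$ finite over $X$: a finite universal homeomorphism onto a semi-normal scheme from a reduced scheme is an isomorphism, which is precisely the descent property needed to glue the pointwise normalizations and to obtain uniqueness of the simultaneous normalization.

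The main obstacle is the implication $(2)\Rightarrow(1)$, and within it the control of the special fibers $X^\nu_b$: a priori these could acquire embedded or non-reduced components, or fail to be normal, and the entire force of the Hilbert-polynomial hypothesis goes into ruling this out via upper semicontinuity while simultaneously delivering flatness of $f^\nu$. The appeal to semi-normality of $B$, as opposed to plain reducedness, is the second delicate point, and is exactly the hypothesis under which the fiberwise normalizations glue to a global simultaneous normalization.
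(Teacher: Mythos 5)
The paper does not prove this statement: it is quoted verbatim from Koll\'ar's work on simultaneous normalization (\cite[Theorem 12]{kolnorm}) and used as a black box, so there is no internal proof to compare against. Judged on its own terms, your sketch of $(1)\Rightarrow(2)$ is essentially right: a simultaneous normalization restricts on each fiber to a finite birational morphism from a geometrically normal scheme, hence is the normalization of the fiber, and flatness of $\overline{f}$ plus projectivity gives local constancy of $\chi(\overline{X}^\nu_b,\calO(tH))$.

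The direction $(2)\Rightarrow(1)$, however, has a genuine gap: your candidate, the normalization $X^\nu\to X$ of the total space, is the wrong object precisely in the regime the theorem is designed for, namely $B$ semi-normal but not normal. Take $B$ a nodal curve and $X=\PP^1\times B$ with $f$ the projection. All fibers are already normal, so $(2)$ holds and the simultaneous normalization is $\overline{\nu}=\mathrm{id}_X$; but $X$ is non-normal along the fiber over the node, and $X^\nu=\PP^1\times B^\nu\to X$ is $2\!:\!1$ over that fiber, so it violates condition $(1)$ of Definition \ref{def:normal} (it is not an isomorphism at the generic point of that fiber, and its fiber is two copies of $\PP^1$, with the wrong Hilbert polynomial). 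This also breaks the semicontinuity comparison you propose: there is no map $\overline{X_b}\to X^\nu_b$ in general, because $X^\nu_b\to X_b$ need not be birational, so one cannot ``squeeze'' $\chi(X^\nu_b)$ between the generic value and $\chi(\overline{X_b})$. The identification of $\overline{X}^\nu$ with the normalization of the total space is valid only for normal $B$ (this is exactly \cite[Theorem 2.3]{ch}, quoted as Remark \ref{rmk:normal} in the paper, where the normality hypothesis is essential). The correct architecture is: first construct the simultaneous normalization of $X\times_B B^\nu\to B^\nu$ over the normalization of the base, where your total-space argument (with the Hilbert-polynomial semicontinuity) does apply; then descend the result to $B$. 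Semi-normality of $B$ is what makes this last descent work --- the glued-down candidate maps to the sought-for object by a finite universal homeomorphism with trivial residue field extensions, which over a semi-normal base is an isomorphism. Your closing appeal to semi-normality is the right kind of statement, but it is deployed at the wrong stage: it is needed to descend from $B^\nu$ to $B$, not to certify an object you have already (incorrectly) built as the normalization of the total space over $B$ itself.
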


\begin{remark}\label{rmk:normal} If $B$ is normal, then the total space $\overline{X}^\nu$ provided by the simultaneous normalization (if it exists) coincides with the normalization of the total space (\cite[Theorem 2.3]{ch}). \end{remark}

We need a simultaneous normalization as above, but in the pairs setting. The only complication is to determine what divisor to choose on $\overline{X}^\nu$ which corresponds to the double locus on fibers. 

\begin{definition}\label{def:conductor} 
Let $X$ be a reduced scheme and let $\nu: X^\nu \to X$ be its normalization. The \emph{conductor ideal sheaf} is the annihilator sheaf $\mathrm{Ann}_{\calO_X}(\nu_*\calO_{X^\nu} / \calO_X),$
is the largest ideal sheaf on $X$ that is also an ideal sheaf on $X^\nu$ .\end{definition}

\begin{remark}\label{rmk:conductor} If $X$ is slc, then the conductor ideal sheaf of $X^\nu$ corresponds to a divisor (see \cite[Remark 4.5]{kss}). We will denote the divisor corresponding to the conductor by $\calC$. \end{remark}

\begin{remark}\label{rmk:extensionnorm} We now discuss the extension of Theorem \ref{thm:norm} for pairs. Given a stable family $(X, D) \to B$ with log canonical general fiber over a semi-normal base $B$, if there exists a simultaneous normalization $\overline{\nu}: \overline{X}^\nu \to X$, then the total space is given by $({\overline{X}^\nu}, \overline{D}^\nu + \overline{\Delta}^{\nu}_{dl})$. Moreover, if the base $S$ is normal, then this total space is the normalization of $(X,D)$. In particular, the divisor $\overline{\Delta}^{\nu}_{dl}$, which cuts out the locus corresponding to the double locus of the normalization, corresponds to the double locus of the the normalization of each fiber. \end{remark}

\subsection{Universal family construction}

\begin{remark}\label{strata} We sketch the approach to stratifying $(\calU, \calD) \to \calM_{\Gamma}$. Let $(\fU, \fD) \to \fM$ be models of the universal family and modulil space. We stratify so that:
\begin{enumerate}
\item Each strata ${\fMb}$ satisfies Theorem \ref{thm:norm} (i.e. locally constant Hilbert polynomial), giving a simultaneous normalization $(\fU_b, \fD_b) \to \fMb$. 
\item Each strata is normal, to apply Remark \ref{rmk:normal}, and then take a flattening stratification to ensure that the conductor divisor (Remark \ref{rmk:conductor}) is flat over the base.
\end{enumerate}  \end{remark}

First we prove existence of a quasicoherent sheaf on each strata $(\calU_b, \calD_b) \to \calM_b$, and then show it also descends to a quasicoherent sheaf on any model $(\fU_b, \fD_b) \to \fMb$. Fix a strata $(\calU_b, \calD_b) \to \calM_b$ of the moduli stack of stable pairs with fixed invariants $(\calU, \calD) \to \calM_{\Gamma}$.

\begin{lemma}
 \label{lem:cot}There exists a coherent sheaf $\calF_b$ on $(\calU_b, \calD_b)$, up to finite base change, such that the restriction of $\calF_b$ to any fiber is isomorphic to the log cotangent sheaf. \end{lemma}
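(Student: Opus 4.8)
The plan is to construct the sheaf on the simultaneous normalization of the family and then push it forward to $(\calU_b,\calD_b)$. Fix the stratum $(\calU_b,\calD_b)\to\calM_b$. After a finite base change of $\calM_b$ — which, as in Remark \ref{strata}, we use to guarantee the existence of a simultaneous normalization (via Theorem \ref{thm:norm}, the base being normal hence semi-normal) and to flatten the conductor divisor — pass to the simultaneous normalization
\[
\overline{\nu}\colon \bigl(\overline{\calU}_b^{\nu},\, \overline{\calD}_b^{\nu} + \overline{\Delta}_{dl}^{\nu}\bigr) \longrightarrow (\calU_b,\calD_b).
\]
Since $\calM_b$ is normal, $\overline{\calU}_b^{\nu}$ is the normalization of the total space (Remark \ref{rmk:normal}), the structure morphism $\overline f\colon \overline{\calU}_b^{\nu}\to\calM_b$ is flat, and by Remark \ref{rmk:extensionnorm} each fiber is the normalized log canonical pair $\bigl(X_t^\nu, D_t^\nu+\Delta^\nu_{dl,t}\bigr)$ of the slc fiber $(X_t,D_t)$ (with $\Delta^\nu_{dl,t}=0$ at points where $X_t$ is already normal).

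Next I would pass to the relative smooth locus. Let $j\colon \calV\hookrightarrow \overline{\calU}_b^{\nu}$ be the open subset where $\overline f$ is smooth and $\overline{\calD}_b^{\nu}+\overline{\Delta}_{dl}^{\nu}$ is relatively simple normal crossings, and set $\calG := \Omega^1_{\calV/\calM_b}\bigl(\log(\overline{\calD}_b^{\nu}+\overline{\Delta}_{dl}^{\nu})\bigr)$, which is locally free, hence flat over $\calM_b$. Because $\overline f$ is flat, $\calV\cap X_t^\nu$ is exactly the locus of $X_t^\nu$ where it is smooth and its boundary is snc; its complement in the normal variety $X_t^\nu$ has codimension $\geq 2$, and the usual base-change compatibility of log differentials along a smooth morphism identifies $\calG|_{\calV\cap X_t^\nu}$ with $\Omega^1$ with log poles of that fiber. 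Consequently $(j_t)_*\bigl(\calG|_{\calV\cap X_t^\nu}\bigr) = \Omega^{[1]}_{X_t^\nu}\bigl(\log(D_t^\nu+\Delta^\nu_{dl,t})\bigr)$ is precisely the reflexive log cotangent sheaf of the fiber, in the sense of \cite{diff}.

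The heart of the argument — and the step I expect to be the main obstacle — is showing that the reflexive extension commutes with base change: I claim $\widetilde\calF_b := j_*\calG$ is coherent, flat over $\calM_b$, and its formation commutes with every base change $\Spec k(t)\to\calM_b$, so that $\widetilde\calF_b|_{X_t^\nu}\cong \Omega^{[1]}_{X_t^\nu}\bigl(\log(D_t^\nu+\Delta^\nu_{dl,t})\bigr)$. Coherence is clear, since $\widetilde\calF_b$ is the reflexive hull over the normal noetherian scheme $\overline{\calU}_b^{\nu}$ of any coherent extension of $\calG$. For flatness and base change I would invoke the relative $S_2$-hull machinery: $\overline{\calU}_b^{\nu}$ is normal hence $S_2$, the fibers $X_t^\nu$ are normal hence $S_2$, $\calV$ is relatively big (fiberwise complement of codimension $\geq 2$), and $\calG$ is flat over $\calM_b$ with $S_2$ restrictions to fibers; after refining the stratification if necessary, the hull $j_*\calG$ is then flat with commuting base change. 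This is the kind of statement that underlies \cite{kp} (see also Koll\'ar's theory of husks); making precise the required refinement of the stratification is where the real work lies.

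Finally I would push forward: put $\calF_b := \overline{\nu}_*\widetilde\calF_b$, coherent because $\overline{\nu}$ is finite and flat over $\calM_b$ because $\overline{\nu}$ is affine over the base. Restricting to a fiber, the Cartesian square $X_t^\nu\to\overline{\calU}_b^{\nu}$, $X_t\to\calU_b$ together with the flatness of $\widetilde\calF_b$ over $\calM_b$ yields $\calF_b|_{X_t}\cong(\nu_t)_*\bigl(\widetilde\calF_b|_{X_t^\nu}\bigr)\cong(\nu_t)_*\Omega^{[1]}_{X_t^\nu}\bigl(\log(D_t^\nu+\Delta^\nu_{dl,t})\bigr)$, where $\nu_t\colon X_t^\nu\to X_t$ is the normalization of $X_t$ (it is finite and an isomorphism over the generic points of $X_t$ since $\overline{\nu}$ is, and $X_t^\nu$ is normal). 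This is exactly the log cotangent sheaf attached to the slc fiber $(X_t,D_t)$ through Definition \ref{def:dbarample}; when one prefers to work on the normalized family it is $\widetilde\calF_b$ itself that serves. I would also record that, by the same base-change argument, $\widetilde\calF_b$ and $\calF_b$ are flat over $\calM_b$ — which is what the later applications in Appendix \ref{app:sheaves} need — and that the whole construction applies verbatim over the ring of integers to descend from $(\calU_b,\calD_b)\to\calM_b$ to any model $(\fU_b,\fD_b)\to\fMb$, since simultaneous normalization, relative log differentials and reflexive hulls are all compatible with the generic-fiber comparison.
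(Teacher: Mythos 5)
Your construction reaches the same sheaf but by a genuinely different route. The paper's proof spends essentially all of its effort on the stack-theoretic side: it passes to an \'etale atlas $V_{\calU_b}\to\calU_b$, normalizes that atlas, takes the relative log cotangent sheaf $\calG_b$ of the (possibly singular) normalized family there, and then uses the universal property of log differentials to produce the isomorphism $\tau\colon p_1^*\calG_b\to p_2^*\calG_b$ on the presentation $V_R\rightrightarrows \overline V_{\calU_b}$ together with the cocycle condition, so that $\calF_b$ is obtained by faithfully flat descent to the normalized universal stack $\overline{\calU}_b$; the reflexive hull is taken only at the very end, and the fiberwise identification is asserted rather than analyzed. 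You instead suppress the descent step entirely (treating $\calU_b,\calM_b$ as schemes --- harmless in substance, since relative log differentials on the relative log-smooth locus, $j_*$, and $\overline\nu_*$ are all canonical and hence carry automatic descent data along the atlas, but it is the part the paper actually writes out) and put all the weight on the step the paper elides: whether the reflexive extension $j_*\calG$ from the relatively big open $\calV$ restricts on each fiber to the fiber's reflexive log cotangent sheaf. That is a real issue --- $j_*$ from a big open does not commute with base change in general --- and your appeal to Koll\'ar-style hull/husk flattening, after refining the stratification, is the right tool and is compatible with the lemma's ``up to finite base change'' caveat; but as you yourself note, you have only named the machinery, not verified its hypotheses or carried out the refinement, so this step remains a sketch. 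Your final pushforward $\overline\nu_*\widetilde\calF_b$ to $(\calU_b,\calD_b)$ is an extra step the paper does not take (its proof leaves the sheaf on the normalized family $(\overline{\calU}_b,\overline{\calD}_b)$, consistent with Definition \ref{def:dbarample}); you handle the base-change of finite pushforward correctly via flatness, and you rightly observe that $\widetilde\calF_b$ itself is what the later arguments use. Net: your version makes explicit, and partially addresses, a base-change subtlety the paper passes over, at the cost of omitting the descent argument that constitutes the paper's actual proof.
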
 

\begin{proof}
By Remark \ref{strata} we can assume $\calM_b$ is normal. Since $\calU_B$ and  $\calM_B$ are Deligne-Mumford stacks, there exist surjective \'etale maps from schemes making the follow commute:

  \[
    \begin{tikzcd} V_{\calU_b} \ar[r] \ar[d]  & \calU_b  \ar[d] \\
    	       V_{\calM_b} \ar[r] & \calM_b  \end{tikzcd}
\] 

Since $\calM_b$ is normal, by Remark \ref{rmk:normal} the simultaneous normalization $\nu$ of $V_{\calU_b} \to V_{\calM_b}$ is the normalization. Denote the new total space by $\overline{V}_{\calU_b}$. By \cite[Lemma A.4]{dori}, $\overline{V}_{\calU_b}$ provides an atlas for a normal Deligne-Mumford stack $\overline{\calU}_b$. Furthermore, obtain a universal divisor $\overline{\calD}_b$, which is $\calD^\nu_b + \calI^\nu_b$, where $\calI^{\nu}_b$ denotes the divisor corresponding to the conductor ideal sheaf.

Since the map $\overline{V}_{\calU_b} \to \overline{\calU}_b$ is faithfully flat,  by \cite[7.18]{vistoli}, to define a sheaf $\calF_b$ on $\overline{\calU}_b$ is equivalent to giving a sheaf over $\overline{V}_{\calU_n}$ with \emph{descent data}. The scheme $\overline{V}_{\calU_b}$ comes equipped with a coherent sheaf, namely the relative log cotangent with respect to the map $\overline{V}_{\calU_b} \to V_{\calM_b}$.  Denoting this sheaf by $\calG_b$, we want to exhibit descent data yielding the the desired sheaf on $(\overline{\calU}_b, \overline{\calD}_b)$. We have a presentation \[
  \begin{tikzcd} V_R \ar[r, shift left, "p_1"] \ar[r, shift right, swap, "p_2"] & \overline{V}_{\calU_b} \ar[r] & \overline{\calU}_b \end{tikzcd}.
\]
This gives two sheaves on $V_R$, namely $p_1^*\calG_b$ and $p_2^*\calG_b$. The sheaf $\calG_b$ satisfies the universal property of the log cotangent sheaf (i.e. it is universal among derivations with simple poles on the divisor), implying the existence of an isomorphism 
$ \tau: p_1^*\calG_b\to p_2^*\calG_b$
In particular $\tau$ is a gluing datum for $\calG_b$. Note that we have the following diagram.
\[
  \begin{tikzcd} V:= V_R \times_{\overline{V}_{\calU_b}} V_R  \ar[r, shift left = .35em] \ar[r] \ar[r, shift right = .35em] & V_R \ar[r, shift left, "p_1"] \ar[r, shift right, swap, "p_2"] & \overline{V}_{\calU_b} \ar[r] & \overline{\calU}_b \end{tikzcd}
\] \
 To show that $\tau$ is a descent datum, we need to show that the fiber product of the three pullbacks of $\tau$ through the naturally defined maps $V \to V_R$ satisfy a cocycle relation. This follows from  the universal property of the log cotangent (\cite[7.20: (ii]{vistoli}).
Thus there is a sheaf $\calF_b$ on $(\overline{\calU}_b, \overline{\calD}_b)$ that coincides fiberwise with the log cotangent sheaf of the corresponding stable pair. Taking the reflexive hull of $\calF_b$ gives the desired sheaf on $(\overline{\calU}_b, \overline{\calD}_b)$.
\end{proof}

The existence of the sheaf $\calF_b$ on the universal family over any fixed strata implies that there is a choice of a finite extension $S' \supset S$ such that the log cotangent sheaf of every fiber of the fixed model of this universal family is almost ample.

\begin{lemma}
  \label{lem:ample_stack1}
Suppose that every stable pair of dimension two with fixed invariants $\Gamma$ defined over $K$ has almost ample log cotangent. Then there is a finite set of places $S$ and a sheaf $\calF_b$, possibly up to finite base change, of $(\mathfrak{U}_b, \fD_b) \to \mathfrak{M}_{b,\Gamma}$  with fixed invariants $\Gamma$ over $\calO_{K,S}$ which is relatively almost ample away from $S$. \end{lemma}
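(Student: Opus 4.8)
The plan is to take the sheaf produced by Lemma \ref{lem:cot}, which lives on the universal family over a characteristic zero base, and to spread it out to a model over a ring of $S$-integers while verifying that the relative positivity persists on every fiber once finitely many bad primes are inverted. First I would apply Lemma \ref{lem:cot} to obtain, on each normal strata $(\overline{\calU}_b, \overline{\calD}_b) \to \calM_b$ of Remark \ref{strata}, a reflexive coherent sheaf $\calF_b$ whose restriction to every fiber is the log cotangent sheaf of the corresponding stable pair. By hypothesis each such fiber $(X_t, D_t)$ has almost ample log cotangent, so by Definitions \ref{def:dbarample} and \ref{def:ample} there is, for each $t$ and a fixed relatively ample $\calH$, an integer $a(t)$ with $\Sym^{[a(t)]}(\calF_b|_{X_t}) \otimes \calH_t^{-1}$ generically globally generated and globally generated over $X_t \setminus \supp(D_t)$.

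Next I would reduce to a uniform exponent: since $\calM_b$ is of finite type, the locus where a fixed $a$ has these two properties is constructible, so by quasi-compactness and Noetherianity a single $a$ works on all of $\calM_b$ (after refining the stratification of Remark \ref{strata} if necessary). Consequently $\Sym^{[a]}\calF_b \otimes \calH^{-1}$ is relatively generically globally generated and relatively globally generated on the complement of $\supp \overline{\calD}_b$, i.e.\ $\calF_b$ is relatively almost ample over $\calM_b$ in characteristic zero. Then I would choose models $(\fU_b, \fD_b) \to \fMb$ of the universal family over some $\calO_{K, S_0}$, compatibly with the stratification and flattening of Remark \ref{strata}, together with a coherent model of $\calF_b$ (the ``possibly up to finite base change'' reflecting the \'etale descent already needed in Lemma \ref{lem:cot}). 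For a fixed $a$, relative almost ampleness is cut out by the two conditions ``$\Sym^{[a]}\calF_b \otimes \calH^{-1}$ relatively generically globally generated'' and ``relatively globally generated off $\supp \fD_b$'', both of which are open on the base $\Spec \calO_{K,S_0}$ by the semicontinuity theorem together with the openness of (generic) global generation in flat families (after possibly replacing $a$ by a multiple to kill the relevant higher cohomology). Since both hold over the generic point --- the characteristic zero statement just established --- they hold after deleting finitely many closed points of $\Spec \calO_{K,S_0}$; enlarging $S_0$ to include those primes gives the finite set $S$ and the sheaf $\calF_b$ on $(\fU_b, \fD_b) \to \fMb$ over $\calO_{K,S}$ that is relatively almost ample away from $S$.

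The main obstacle is that one cannot argue directly with the augmented base locus $\mathbf{B}_+(\calF_b)$, since $\mathbf{B}_+$ need not be constructible in families nor compatible with base change; what rescues the argument is that, with a single exponent $a$ fixed, almost ampleness is equivalent to the two \emph{explicit} global generation statements above, which are honest open conditions in flat proper families. A related technical nuisance is that reflexive symmetric powers do not commute with base change, which is precisely why one must work on the normal strata of Remark \ref{strata}, where the conductor divisor is flat, and carry out the reduction to a uniform $a$ only after passing to that stratification, so that the final openness argument applies to the reflexive sheaf $\calF_b$ itself rather than merely to its restriction over the flat locus. I expect organizing these reductions cleanly --- fixing $a$, choosing the strata, keeping the conductor flat through the spreading-out --- to be the real work.
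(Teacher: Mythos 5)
Your proposal follows essentially the same route as the paper: obtain the fiberwise log cotangent sheaf from Lemma \ref{lem:cot} on each normal stratum, spread it out to a model over $\calO_{K,S}$, and enlarge $S$ so that almost ampleness persists on all fibers away from $S$. The only difference is one of emphasis --- the paper spends most of its proof descending the sheaf along the atlas presentation of the model $(\fU_b,\fD_b)\to\fMb$ and simply asserts the final openness of almost ampleness, whereas you treat the descent as routine and instead detail the openness argument (uniform exponent $a$ by constructibility, global generation as an open condition in flat families) --- so the two accounts are complementary rather than divergent.
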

\begin{proof}
As above, let $\calM_b$ be a (normal) strata of the moduli stack of stable pairs, let $(\calU_b, \calD_b)$ be the universal family. By Lemma \ref{lem:cot}, the family $(\calU_b, \calD_b) \to \calM_b$ comes with a coherent sheaf, the log cotangent sheaf $\calF_b$, which by assumption is almost ample. We need to show that this sheaf extends to an almost ample coherent sheaf on any model.   Consider the following presentation, \[
    \begin{tikzcd} R_{\calU_b} \ar[d, shift left] \ar[d, shift right] & R_{{\fU}_b} \ar[d, shift left] \ar[d, shift right] \\ V_{\calU_b} \ar[d] & V_{{\fU}_b} \ar[d] \\ {\calU}_b \ar[d] \ar[r]& {\fU}_b \ar[d] \\ \calM_b  \ar[r] & \fMb \end{tikzcd}.
\] By construction of $\calF_b$ in Lemma \ref{lem:cot}, we have an almost ample coherent sheaf, which we call $F_b$, on the atlas $V_{{\calU}_b}$ and therefore, by pushing forward, we have a coherent sheaf on $V_{{\fU}_b}$. The two pullbacks of this sheaf to the presentation $R_{{\fU}_B}$ are compatible since they agree on the generic fiber and therefore give a descent datum for the sheaf. This implies that the sheaf $\calF_b$ extends over an open subset of the base $\fMb$. 

Recall that the sheaf $\calF_b$ is almost ample on every fiber of $(\fU_b, \fD_b)$ (possibly up to finite base change). Hence, after a finite extension $S' \supset S$ if necessary (but noting that this extension \emph{does not} depend on $(X,D)$), we can assume that outside $S'$, the log cotangent sheaf extends to a relatively almost ample sheaf $\calF_b$ over any model of the universal family.
\end{proof}

\begin{corollary}\label{cor:ample_stack}
If every stable surface pair of fixed invariants $\Gamma$ defined over $K$ has almost ample log cotangent, then there is a finite set $S' \supset S$ such that  $\calF_b$ is relatively almost ample away from $S$. 
\end{corollary}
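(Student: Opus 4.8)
The plan is to deduce this from Lemma \ref{lem:ample_stack1} together with the finiteness of the stratification described in Remark \ref{strata}. Because the invariants $\Gamma$ are fixed and the dimension equals two, the moduli stack $\calM_\Gamma$ is of finite type (\cite{kp}, Theorem \ref{th:Q}). Hence the stratification indicated in Remark \ref{strata} --- first into the loci on which the Hilbert polynomial of the normalization of the fibers is locally constant, so that Theorem \ref{thm:norm} yields a simultaneous normalization of the universal family, and then refining so that each stratum is normal and, after a flattening stratification, the conductor divisor of Remark \ref{rmk:conductor} is flat over it --- produces only \emph{finitely many} strata $\calM_b$, each equipped with its model $\fMb$ and universal family $(\fU_b, \fD_b) \to \fMb$.

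For each such stratum, Lemma \ref{lem:cot} produces a coherent sheaf $\calF_b$ on $(\calU_b, \calD_b)$ whose restriction to every fiber is the log cotangent sheaf of the corresponding stable surface pair. The hypothesis of the corollary --- that every stable surface pair with invariants $\Gamma$ defined over $K$ has almost ample log cotangent --- says precisely that $\calF_b$ is fiberwise almost ample, which is exactly the hypothesis of Lemma \ref{lem:ample_stack1}. Applying that lemma to each of the finitely many strata gives, for each one, a finite set of places $S_b' \supseteq S$ (not depending on the individual pair) together with an extension of $\calF_b$ to a sheaf on a model $(\fU_b, \fD_b) \to \fMb$ that is relatively almost ample away from $S_b'$.

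Finally I would set $S'$ to be the union of the finitely many sets $S_b'$; this is a finite set of places containing $S$. Every stable surface pair with invariants $\Gamma$ over an extension of $K$ occurs as a fiber of exactly one of the models $(\fU_b, \fD_b) \to \fMb$, so its log cotangent sheaf is the restriction of the relatively almost ample sheaf $\calF_b$ and is therefore almost ample away from $S'$. This is the assertion of the corollary (with the ``$S$'' in the statement read as this larger $S'$, exactly as in Lemma \ref{lem:ample_stack1}).

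The step that requires the most care is verifying that the constructions can all be carried out compatibly on each stratum: passing to a normal stratum, taking the simultaneous normalization of the universal family, flattening the conductor, and then performing the \'etale-descent construction of $\calF_b$ as in Lemma \ref{lem:cot}, each time at the cost of only a single finite base change. Once the stratification is in place, the remaining content is just Lemma \ref{lem:ample_stack1} applied finitely many times, together with the observation that a finite union of finite sets of places is finite.
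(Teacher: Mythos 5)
Your argument is correct and matches the paper's own proof: both apply Lemma \ref{lem:ample_stack1} stratum by stratum, use finite type of the moduli space to get finitely many strata, and take the finite union of the resulting sets of places. You have simply written out in more detail what the paper compresses into one sentence.
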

\begin{proof} Take the (finite) union of the $S$ appearing in Lemma \ref{lem:ample_stack1} for each stratum, noting that we can make the base changes of the above Lemma ``global'', since the moduli space is of finite type.
\end{proof}

\section{Coarse moduli-stably integral points}
\label{app}
In this appendix we show how to define $ms$-integral points without appealing to good models (see Section \ref{sec:models}). The definition we state here is weaker, in the sense that it relies on models of the coarse moduli space of stable pairs instead of models of the stack. However, it has the advantage that such models can be proven to exist unconditionally. We start by recalling the following definition:

\begin{definition}\cite{Deligne}
  \label{def:coarse}
  A \emph{coarse moduli space} for a stack $\fM$ over a base scheme $S$ is an algebraic space $[\fM]$ over $S$ with a $S$-morphism $\phi_{\fM}: \fM \to [\fM]$ such that:
  \begin{enumerate}
    \item Every morphism $\fM \to X$, for an algebraic space $X$, factors uniquely through $\phi_{\fM}$;
    \item For every geometric point $\bar{s} \in S(\overline{k})$, $\pi$ induces a bijection between isomorphism classes of $\fM$ over $\bar{s}$ and $[\fM(\bar{s}]$.
  \end{enumerate}
\end{definition}

Given a stable family $(X, D)\to B$ with fixed volume, dimension and coefficient set (as in \cite{kp}), any moduli functor $\mathfrak{F}$ for which $(X,D)$ is an object of $\mathfrak{F}(B)$ is proper, and any algebraic space which is a coarse moduli space for the functor is a projective variety (\cite[Theorem 1.1]{kp}). In particular, if we fix one of such functor (for example the one described in \cite[Definition 5.6]{kp}, which in addition is a Deligne-Mumford-stack of finite type over any algebraic closed field of char 0), then the corresponding moduli stack of stable pairs $\calM_{\Gamma}$, for fixed geometric invariants, is projective and possesses a universal family $(\calU,\calD)$ with the property that the following diagram commutes:
\[
  \begin{tikzcd} (X,D) \ar[r]^{} \ar[d]_{} & (\calU,\calD) \ar[d]^{} \\ 
    B \ar[r]^{} & \calM_\Gamma\end{tikzcd} 
\] 
By Theorems \ref{th:Q} and \ref{th:proj_coarse} the above diagram holds over $\Q$, and therefore over any number field, and the corresponding coarse moduli spaces are projective over the same ground field. It is not known in general whether the moduli stacks $(\calU,\calD) \to \calM_{\Gamma}$ possess models over Dedekind domains which are moduli stacks for stable pairs over such domains (although it is possible to find specific models of the such stacks using limit methods for algebraic stacks, see \cite{Olsson} or \cite{Rydh}). This would make the previous diagram hold when the family $(X,D) \to B$ has a model over the ring of integers of a number field. However, allowing finite extensions of $S$, one can identify specific models for the coarse moduli spaces that depend only on the stacks, the number field, and the set of places.
      
\begin{theorem}
  Given a number field $K$ and a finite set of places $S$ there exists $S^m \supset S$ such that the coarse moduli spaces $[\calM_\Gamma], [\calU]$, and $[\calD]$ admit proper models over $\Spec \calO_{K,S^m}$. Moreover, the set $S^m$ depends only on the (models of the) underlying projective varieties.
  \label{th:models}
\end{theorem}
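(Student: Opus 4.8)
\medskip
\noindent\textbf{Proof plan.} The plan is a straightforward spreading-out argument. By Theorems \ref{th:Q} and \ref{th:proj_coarse}, the stack $\calM_\Gamma$ and its universal family $(\calU,\calD)$ are defined over $\Q$ and have projective coarse moduli spaces; in particular, after base change to $K$, the spaces $[\calM_\Gamma]$, $[\calU]$, $[\calD]$ are projective varieties over $\Spec K$, and the universal family and universal divisor induce projective morphisms $[\calD] \to [\calU] \to [\calM_\Gamma]$ over $K$.

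First I would fix, for each of $[\calM_\Gamma]_K$, $[\calU]_K$ and $[\calD]_K$, a closed immersion into a projective space over $K$ together with a finite set of homogeneous polynomials cutting out the image, and similarly choose finitely many polynomials describing the two morphisms. All of these data involve only finitely many coefficients in $K$; enlarging $S$ to a finite set $S^m \supset S$ containing every place in the support of the denominators of these coefficients, the same equations define projective --- hence proper --- schemes $\fM_\Gamma$, $\fU$, $\fD$ over $\Spec \calO_{K,S^m}$, together with morphisms $\fD \to \fU \to \fM_\Gamma$ whose generic fibres recover the diagram over $K$.

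Next, to upgrade these to \emph{models} in the sense of Section \ref{sec:prel} it remains to arrange flatness over $\Spec \calO_{K,S^m}$. By generic flatness (see \cite{stacks-project}), each of $\fM_\Gamma$, $\fU$, $\fD$ is flat over the base away from a closed subset of $\Spec \calO_{K,S^m}$ not meeting the generic point, i.e. away from finitely many closed points; adding these finitely many places to $S^m$ makes all three schemes flat over $\Spec \calO_{K,S^m}$ while preserving properness, and the generic fibres are unchanged. This produces the asserted proper models.

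Finally, the set $S^m$ constructed in this way depends only on the equations and the morphisms of the fixed projective varieties $[\calM_\Gamma]$, $[\calU]$, $[\calD]$ --- equivalently, only on the choice of moduli functor, the invariants $\Gamma$, and the number field $K$ --- and not on any particular stable pair, since only finitely many coefficients and finitely many flatness-failure points enter the construction. The only point that needs a little care is the bookkeeping: checking that a single finite $S^m$ can be chosen at once for all three varieties and both morphisms, which is immediate because only finitely many data are involved; up to further enlarging $S^m$ the resulting models are moreover independent of the auxiliary choices made, cf. \cite[Appendix B]{Rydh}. I do not expect a serious obstacle here --- the content is entirely that projectivity over $\Q$ coming from Theorems \ref{th:Q} and \ref{th:proj_coarse} propagates to the integral level.
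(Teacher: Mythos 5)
Your proposal is correct and follows essentially the same route as the paper: clear the denominators of the finitely many polynomial coefficients defining the projective coarse spaces and the morphisms between them, enlarge $S$ accordingly, and observe that the resulting projective schemes over $\Spec \calO_{K,S^m}$ are proper. Your additional generic-flatness step is a welcome refinement, since the paper's definition of a model does require flatness over the base but its proof does not address it explicitly.
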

\begin{proof}
  By definition the stack $\calM_{\Gamma}$ has an underlying coarse moduli space which is a projective variety, and therefore it is the locus of the common zeroes of finitely many polynomials $g_1,\dots,g_n$ with coefficients in $K$. It then follows that since the number of overall coefficients are finite, there exists a (minimal) set of places $T$ such that the $g_i$ are polynomials with $T$-integral coefficients. For such $T$, the coarse moduli space $[\calM_\Gamma]$ has a model: $[\calM_\Gamma]^m$ over $\calO_{K,T}$. Since the model is projective by construction, it is also proper. Applying the same procedure to $\calU$ and $\calD$, one finds a (possibly larger) set of places $T$ such that $([\calU],[\calD])\to [M_\Gamma]$ have a model $([\calU]^m,[\calD]^m)\to [\calM_\Gamma]^m$ over $\calO_{K,T}$. Define $S^m = S \cup T$; by definition all the coarse moduli spaces have models over $\calO_{K,S^m}$ and, at most after enlarging the set $S^m$, the moduli map $(\calU,\calD) \to \calM_{\Gamma}$ extends to a map on the models.
\end{proof}

Given the model defined in Theorem \ref{th:models}, we can define coarse moduli stably $S$-integral points with respect to the choice of the model we made.

\begin{definition}
  \label{def:m-stab}
  Let $P$ be a $K$-rational point of a stable pair $(X,D)$ defined over $K$, and let $\overline{P}$ denote the corresponding maps to the moduli space
  \[ 
    \begin{tikzcd} \overline{P}: \Spec K \ar[r] & (X,D) \ar[r]^{} \ar[d]_{} & (\calU,\calD) \ar[d]^{} \\ 
    \ & \Spec K \ar[r]^{} & \calM_{\Gamma}\end{tikzcd} \] 
    We say that $P$ is \emph{coarse moduli-stably $S$-integral over $K$}, or $cms$-integral, if the image is $(S^m,\calD)$ integral in $\calU$. We denote by $X(\calO_{K,S}^{\text{cms}})$ the set of all $cms$ $S$-integral points over $K$.
\end{definition}

Note that the previous definition depends on the choice of models made in Theorem \ref{th:models}. This would be canonical if the existence of a moduli stack $\calM_\Gamma$ over $\Spec \calO_{K,S}$ was known.

The definition of $cms$-integral points depend on models of the coarse moduli space. As we will use results comparing integral points of fibered powers of families of stable pairs with integral points of the pair itself, we need to ensure that Definition \ref{def:m-stab} is compatible with fibered powers. In general fibered powers do not commute with the formation of coarse moduli spaces. However a weaker property of base change holds: for any map of schemes $S' \to S$, the following map:$
  \iota: [\calM_\Gamma \otimes_S S'] \to [\calM_\Gamma] \otimes_S S'$
is universally injective. Recall that a universally injective morphism is a morphism that is injective on $K$-valued points for every field $K$.

We want to compare the coarse moduli space of the fiber product $[\calM_\Gamma \times \calM_\Gamma]$ with the fiber product of the coarse moduli space $[\calM_\Gamma] \times [\calM_\Gamma]$ (every product is over the fixed base, $\Spec K$) in the case in which the coarse moduli space $[\fM]$ is a scheme. There is a map between the fibered product of the stack to the fibered power of the coarse moduli space given by composition as follows:
\[
 \alpha: \calM_\Gamma \times \calM_\Gamma \to \calM_\Gamma \times [\calM_\Gamma] \to [\calM_\Gamma \times [\calM_\Gamma]] \to [\calM_\Gamma] \times [\calM_\Gamma].
\]
By the first property of coarse moduli spaces, this defines a map:
$
  \beta_2: [\calM_\Gamma \times \calM_\Gamma] \to [\calM_\Gamma] \times [\calM_\Gamma],
$
which, inductively, defines a map $ \beta_n: [\calM_\Gamma^n] \to [\calM_\Gamma]^n.$ 

However this map is not injective in general, since the group of automorphisms of the fiber product is, in general, strictly contained in the product of the automorphism group. In particular, if (\'{e}tale) locally over the coarse moduli space, $\calM_\Gamma$ is presented locally as $X/G$, $\calM_\Gamma \times \calM_\Gamma$ is presented by the quotient of $X \times X$ by a finite group $G'$, which in general will be a subgroup of the $\Aut(X \times X)$. On the other hand, the coarse moduli space $[\calM_\Gamma] \times [\calM_\Gamma]$ will look locally like $X/G \times X/G$. Since $G' \subset G \times G$ this gives locally a finite map $\beta: [\calM_\Gamma \times \calM_\Gamma] \to [\calM_\Gamma] \times [\calM_\Gamma]$.

\begin{definition}
  \label{def:fp}
  Let $(X,D)$ be a stable pair defined over a $K$, let $\calO_{K,S}$ be the ring of $S$-integers, and let $n$ be a positive integer. A rational point $P \in (X^n,D_n)$ is \emph{coarse moduli stably $(S,D_n)$-integral} if $\beta(P)$ is $(S_m,\calD)$ integral in the model of the coarse moduli space given by the fiber product of the models defined in Theorem \ref{th:models}. \end{definition}
  
   Explicitly, this happens if the pair over $\Spec K$ comes with a commutative diagram
    \[ 
      \begin{tikzcd} (X,D) \ar[r]^{} \ar[d]_{} & (\calU, \calD) \ar[d]^{} \\ 
       \Spec K \ar[r]^{} & \calM_\Gamma \end{tikzcd} 
    \] 
    for a specific moduli stack $\calM_{\Gamma}$. Let $[\calU]^m$ be the model of the coarse moduli space $[\calU]$ over $\calO_{S^m,K}$: then $([\calU]^m)^n$ is a model of $[\calU]^n$. One can prove Theorem \ref{th:deg_bound} using $cms$-integral points instead of $ms$-integral points. However, to prove uniformity, one would need to extend the subvariety property	 to $cms$-integral points, which a priori is harder to achieve since there is no good model to refer to.

\section{Hyperbolicity}\label{sec:apphyp}
In this section we give an example that shows that hyperbolicity, in the sense of having all subvarieties of log general type, is not a closed condition.

\begin{proposition}\label{smoothing}
	Let $C \subset \PP^3$ be a curve and suppose that there exists a smoothing of $C$ in $\PP^3$.  If  $X \subset \PP^3$ is a surface of sufficiently high degree such that $C \subset X$, then there exists a smoothing of $X$ in $\PP^3$ that contains a smoothing of $C$.
\end{proposition}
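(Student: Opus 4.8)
The plan is to smooth the pair $C\subset X$ inside $\PP^3$ simultaneously, by working on the flag Hilbert scheme of such pairs. Put $d=\deg X$; after enlarging $d$ we may assume $\mathcal I_{C}$ is $d$-regular, so $\mathcal I_{C}(d)$ is globally generated, $H^1(\PP^3,\mathcal I_{C}(d))=0$, and---by boundedness of the relevant Hilbert scheme together with semicontinuity---the same vanishing holds uniformly for every curve $C'$ in a Zariski neighbourhood of $[C]$, with $h^0(\mathcal I_{C'}(d))=\binom{d+3}{3}-\chi(\OO_{C}(d))$ constant. Let $\mathcal H$ be the flag Hilbert scheme of pairs $(C'\subset X')$ with $X'$ a degree-$d$ surface and $C'$ a curve with the same Hilbert polynomial as $C$; let $h=(C\subset X)\in\mathcal H$, let $\mathcal H_{0}$ be an irreducible component through $h$, and let $p\colon\mathcal H_{0}\to\PP^{N}=|\OO_{\PP^3}(d)|$ and $q\colon\mathcal H_{0}\to\mathrm{Hilb}$ be the two forgetful maps, with $T_{0}:=q(\mathcal H_{0})$. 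By the uniform statement above, over a dense open of $T_{0}$ the fibre of $q$ is the full linear system $|\mathcal I_{C'}(d)|\cong\PP^{m}$, $m=h^0(\mathcal I_{C'}(d))-1$. Hence it suffices to exhibit one point of $\mathcal H_{0}$ at which \emph{both} $X'$ and $C'$ are smooth: then, $\mathcal H_{0}$ being irreducible and both conditions being open, their common locus $\mathcal U\subset\mathcal H_{0}$ is dense open; choosing an irreducible curve $B\subset\mathcal H_{0}$ through $h$ that meets $\mathcal U$, normalising $B$, pulling back the universal pair from $\PP^3\times\mathcal H_{0}$ and restricting to a neighbourhood of $h$ produces a flat family $(\mathcal C\subset\mathcal X)\subset\PP^3\times B$ with central fibre $(C\subset X)$ and smooth general fibre $(C_{b}\subset X_{b})$, i.e. a smoothing of $X$ containing a smoothing of $C$.

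To produce such a smooth pair, choose $[C_{1}]$ general in $T_{0}$ and $X_{1}$ general in $|\mathcal I_{C_{1}}(d)|$; by the previous paragraph $(C_{1}\subset X_{1})\in\mathcal H_{0}$. By Bertini $X_{1}$ is smooth away from $C_{1}$. If $C_{1}$ is smooth then $X_{1}$ is smooth along $C_{1}$ as well: at a smooth point $p\in C_{1}$ the ideal $\mathcal I_{C_{1},p}$ is generated by two elements with independent linear parts, and since $\mathcal I_{C_{1}}(d)$ is globally generated a general section has nonzero differential at $p$, so the corresponding surface is smooth there; as the sections singular at a fixed $p$ form a subspace of codimension $2>\dim C_{1}=1$, a dimension count over $C_{1}$ shows a general $X_{1}$ is smooth along all of $C_{1}$. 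Thus $X_{1}$ is smooth everywhere. When $C$ is itself smooth one simply takes $C_{1}=C$---nothing has to be moved---and the argument reduces to the pencil $\langle X,X_{1}\rangle\subset|\mathcal I_{C}(d)|$ spanned by $X$ and a general (hence smooth) member $X_{1}$: every member contains $C$, only finitely many are singular, so $X_{t}$ is smooth for $t$ in a punctured neighbourhood of $0$.

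The only delicate input---and the main obstacle in the general case---is the existence of a \emph{smooth} curve $C_{1}$ specialising to $C$, equivalently the statement that the component $T_{0}\subset\mathrm{Hilb}$ through $[C]$ has smooth general member. This is immediate when $C$ is already smooth (the situation actually used in the example that follows), holds when $C$ has at worst nodes, and in general is precisely the place where the hypothesis that $C$ lie on a surface of sufficiently high degree, together with any tameness of the singularities of $C$, has to enter. Granting this point, the two steps above combine to give the assertion.
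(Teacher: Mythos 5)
Your argument is correct in substance and, once unwound, is essentially the same as the paper's: both rest on the vanishing $H^1(\PP^3,\calI_{C'}(d))=0$ for $d\gg 0$ --- in your formulation this makes the flag Hilbert scheme a projective bundle over the Hilbert scheme near $[C]$, in the paper's it makes $\pi_*\calI_{C_T}(d)$ locally free by cohomology and base change --- together with the Bertini-type fact that a general high-degree surface containing a smooth curve is smooth. The paper's packaging is more economical: it takes a one-parameter smoothing $\calC_T$ of $C$ over a DVR as its \emph{starting datum} and lifts $X$ to a general section of the bundle $\pi_*\calI_{C_T}(d)$, so no flag Hilbert scheme or irreducibility/openness bookkeeping is needed. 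This also disposes of the one point you leave open: the existence of a smooth curve $C_1$ specializing to $C$ is not derived from the hypotheses in the paper either --- its proof simply begins ``let $\calC_T$ be a smoothing of $C$,'' i.e.\ smoothability of $C$ in $\PP^3$ is an implicit hypothesis of the proposition, not a consequence of $C$ lying on a surface of high degree (and your aside that nodality alone suffices is an overclaim in general; for the curve $C_0\cup C_g$ of Example \ref{ex:counterexample} it follows from Hartshorne--Hirschowitz-type smoothing results for a smooth curve union a quasi-transversal line). With that hypothesis granted, your proof is complete and equivalent to the paper's.
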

\begin{proof}
	Let $T = \Spec R$ with $R$ a DVR and $\calC_T$ be a smoothing of $C$ in $\PP^3$ such that $C \cong \calC_0$.  Consider the sequence of sheaves over $T$
	\[ 0 \to \calI_{C_T} \to \calO_{\PP^3_T} \to \calO_{C_T} \to 0 .\]
	Twisting by $\calO(d)$, for any $d$, we have 
	\[ 0 \to \calI_{C_T}(d) \to \calO_{\PP^3_T}(d) \to \calO_{C_T}(d) \to 0 .\]
	
	Pushing forward along $\pi: \PP^3_T \to T$, we have
	\[ 0 \to \pi_* \calI_{C_T}(d) \to \pi_* \calO_{\PP^3_T}(d) \to \pi_* \calO_{C_T}(d) \to R^1\pi_* \calI_{C_T}(d), \]
	but for all $d \gg 0$ we have $H^1(\PP^3_t, \calI_{C_t}(d)) = 0$, hence by Cohomology and Base Change \cite[Theorem 12.11]{hartshorne}, $R^1\pi_* \calI_{C_T}(d)= 0$.  Therefore we get an exact sequence \[0 \to \pi_* \calI_{C_T}(d) \to \pi_* \calO_{\PP^3_T}(d) \to \pi_* \calO_{C_T}(d) \to 0,\]
	which shows that $\pi_* \calI_{C_T}(d)$ is a vector bundle over $T$.  Let $X$ be a surface with $C \subset X$, considered as an element of $H^0(\PP^3_0, \calI_{C_0}(d))$.  Let $\sigma$ be a general section of $\pi_* \calI_{C_T}(d)$ such that $\sigma_0 \cong X$.  To show that a general $\sigma$ gives a smoothing of $X$, it suffices to show that the general surface $X_t$ containing $\calC_t$ is smooth.  However, $\calC_t$ is a smooth curve, so there exists a surface $X_t$ containing $\calC_t$ and the general such $S_t$ is smooth (see e.g. \cite[Theorem 1.1, Remarks 1.2(a]{smoothing}). 
\end{proof}

The following Example \ref{ex:counterexample} shows the existence of a stable family $(X,D) \to B$ over a curve, whose generic fiber $(X_\eta, D_\eta)$ is smooth, and hyperbolic, in the sense that it does not contain any non-stable curve, but with special fiber $(X_0, D_0)$ such that:
\begin{itemize} 
\item all curves not contained in the double locus are of log general type, 
\item there exists a unique rational curve $C_{rat}$ in the double locus which only meets $D_0$ at a single (nodal) point, and 
\item the curve $C_{rat}$ is a component of the limit of a family of smooth stable curves in $(X,D) \to B$. 
\end{itemize}
In particular, the special fiber is not hyperbolic since it contains a curve which is \emph{not} of log general type. 

We recall that a surface $X$ is said to be \emph{semismooth} if the closed points of $X$ are smooth, double normal crossings, or pinch points.

\begin{example}\label{ex:counterexample}	Let $C = C_0 \cup C_g \subset \PP^3$ be the union of a line $C_0$ and a sufficiently general curve $C_g$ of genus $g$ such that $C_0 \cap C_g$ is a single point $p$.  
	
	We can find a semismooth surface $X$ containing $C$ such that $C_0$ is contained in the double locus $\dbl \subset X$.  First, take any smooth surface $X_1$ of sufficiently high degree containing $C_g$ (e.g. see \cite{smoothing}) and any surface $X_2$ containing $C_0$.  If $X_3$ is a sufficiently general surface containing $C_0$, the surface $X = X_1 \cup X_2 \cup X_3$ is semismooth, contains $C$, and has $C_0$ contained in the double locus.  
	
	Note that we can find a smoothing of $C$ in $\PP^3$; this is guaranteed by Brill-Noether theory (see e.g. \cite{harris}). Therefore, by Proposition \ref{smoothing} we can simultaneously smooth $X$ and $C$, i.e. we can find a smoothing $\calX_T$ of $\calX_0 \cong X$ in $\PP^3$ that contains a smoothing $\calC_T$ of $C = \calC_0$.  If $D \subset \PP^3$ is a general hyperplane section, the family $(\calX_T, D \times T)$ is a family of semismooth pairs such that each fiber $(\calX_{t}, D)$ is stable.
	
	By \cite[Corollary 1.3(c)]{smoothing}, the Picard group of the very general surface $X \subset \PP^3$ of degree $d\gg 0$ containing $C$ is generated by $C$ and $H$. By an adjunction calculation, the surface $X$ cannot contain any lines or conics. In particular, all subvarieties of $(X,D)$ are of log general type. 
	
	However, the family of curves $\big(\calC_T, (\calC_T \cap D \times T)\big)$ is not stable.  In the central fiber $\calC_0$, the rational component $C_0$ meets $C_g$ in a single point $p$ and $C_0 \cap D$ is a double point $q$.  Therefore, in stable reduction we blowup up $q$ and the strict transform of $C_0$ is contracted.  
\end{example}

\bibliographystyle{alpha}	
\bibliography{uniformity}

\end{document}